\documentclass[journal]{IEEEtran}
\pdfoutput=1
\usepackage{cite}
\usepackage{amssymb}
\usepackage{amsmath}
\usepackage{amsthm}

\usepackage{algorithmic}
\usepackage{graphicx}
\usepackage{textcomp}

\usepackage{hyperref}
\usepackage{caption}
\usepackage{subfigure}
\usepackage[ruled,vlined]{algorithm2e}

\newcommand\br{\mathbf{r}}

\newcommand\bq{\mathbf{q}}
\newcommand\bQ{\mathbf{Q}}
\newcommand\bJ{\mathbf{J}}
\newcommand\bE{\mathbf{E}}
\newcommand\bg{\mathbf{g}}

\newcommand\bx{\mathbf{x}}
\newcommand\bW{\mathbf{W}}
\newcommand\bd{\mathbf{d}}
\newcommand\bH{\mathbf{H}}

\newcommand\oB{\overline{\mathbf{B}}}
\newcommand\ox{\overline{\mathbf{x}}}
\newcommand\og{\overline{\mathbf{g}}}
\newcommand\od{\overline{\mathbf{d}}}
\newcommand\oH{\overline{\mathbf{H}}}

\newcommand\diag{\operatorname{diag}}
\newtheorem{rmk}{Remark}
\newtheorem{fct}{Fact}
\newtheorem{thm}{Theorem}
\newtheorem{lem}{Lemma}
\newtheorem{coro}{Corollary}
\newtheorem{props}{Proposition}
\newtheorem{asp}{Assumption}


\begin{document}
\title{A Communication-Efficient Decentralized Newton's Method with Provably Faster Convergence}
\author{Huikang Liu, Jiaojiao Zhang, Anthony Man-Cho So, and Qing Ling
\thanks{Huikang Liu is with the Research Institute for Interdisciplinary Sciences, School of Information Management and Engineering, Shanghai University of Finance and Economics (e-mail: liuhuikang@sufe.edu.cn). }
\thanks{Jiaojiao Zhang is with the Division of Decision and Control Systems, School of Electrical Engineering and Computer Science, KTH Royal Institute of Technology (e-mail: zdhzjj@mail.ustc.edu.cn).}
\thanks{Anthony Man-Cho So is with the Department of Systems Engineering and Engineering Management, The Chinese University of Hong Kong (e-mail: manchoso@se.cuhk.edu.hk).}
\thanks{Qing Ling is with the School of Computer Science and Engineering and Guangdong Province Key Laboratory of Computational Science, Sun Yat-Sen University, and also with the Pazhou Lab (e-mail: lingqing556@mail.sysu.edu.cn).}}

\maketitle

\begin{abstract}
In this paper, we consider a strongly convex finite-sum minimization problem over a decentralized network
and propose a communication-efficient decentralized Newton's method for solving it.
The main challenges in designing such an algorithm come from three aspects: (i) mismatch between local gradients/Hessians and the global ones; (ii) cost of sharing second-order information; (iii) tradeoff among computation and communication. To handle these challenges, we first apply dynamic average consensus (DAC) so that each node is able to use a local gradient approximation and a local Hessian approximation to  track the global gradient and Hessian, respectively. Second, since exchanging Hessian approximations is far from communication-efficient, we require the nodes to exchange the compressed ones instead and then apply an error compensation mechanism to correct for the compression noise. Third, we introduce  multi-step consensus for exchanging local variables and local gradient approximations to balance between computation and communication. 
%
With novel analysis, we establish the globally linear (resp., asymptotically super-linear) convergence rate of the proposed method when $m$ is constant (resp., tends to infinity), where $m\ge1$ is the number of consensus inner steps. To the best of our knowledge, this is the first super-linear convergence result for a communication-efficient decentralized Newton's method. Moreover, the rate we establish is provably faster than those of first-order methods. Our numerical results on various applications corroborate the theoretical findings.

\end{abstract}
\begin{IEEEkeywords}
Decentralized optimization, convergence rate, Newton's method, compressed communication
\end{IEEEkeywords}

\section{Introduction}
In this paper, we consider solving a finite-sum optimization problem defined over an undirected, connected network with $n$ nodes:
\begin{align}\label{eq-obj}
x^*=\arg\min_{x \in \mathbb{R}^d} F(x) \triangleq \frac{1}{n}\sum_{i = 1}^{n} f_i(x).
\end{align}
Here, $x\in\mathbb{R}^d$ is the decision variable and $f_i: \mathbb{R}^d\to \mathbb{R}$ is a twice-continuously differentiable function privately owned by node $i$. The entire objective function $F$ is assumed to be strongly convex. Each node  is allowed to exchange limited information with its neighbors during the optimization process. To make \eqref{eq-obj} separable across the nodes, one common way is to introduce a local copy  $x_i\in\mathbb{R}^d$ of $x$ for node $i$ and then force all the local copies to be equal by adding consensus constraints.  
This leads to the following alternative formulation of Problem  \eqref{eq-obj}:
\begin{align}\label{eq-obj-constraint}
\bx^*= \underset{\left\{{x}_{i}\right\}_{i=1}^{n}}{{\arg\min}} & ~ \frac{1}{n}\sum_{i=1}^{n} f_{i}\left({x}_{i}\right) \\\nonumber
\text { s.t. } & ~ {x}_{i}=x_{j}, ~ \forall j\in \mathcal{N}_i, ~ \forall i.
\end{align}
Here, $\bx^*\triangleq [x^*;\ldots;x^*]\in \mathbb{R}^{nd}$ and $\mathcal{N}_i$ is the set of neighbors of node $i$. The equivalence between \eqref{eq-obj} and \eqref{eq-obj-constraint} holds when the network is connected.
Decentralized optimization problems in the form of \eqref{eq-obj-constraint} appear in various applications, such as deep learning \cite{liu2020decentralized}, sensor networking \cite{qin2020randomized}, statistical learning \cite{beznosikov2021distributed}, etc.

Decentralized algorithms for solving \eqref{eq-obj-constraint} are well studied. All nodes cooperatively obtain the common optimal solution $x^*$, simultaneously minimizing the objective function and reaching consensus. Generally speaking, minimization is realized by inexact descent on local objective functions and consensus is realized by variable averaging with a mixing matrix \cite{xiao2004fast}. Below, we briefly review the existing first-order and second-order decentralized algorithms for solving \eqref{eq-obj-constraint}.

\subsection{Decentralized First-order Methods}
First-order methods enjoy low per-iteration computational complexity and thus are popular. Decentralized gradient descent (DGD) is studied in \cite{nedic2009distributed,lian2018asynchronous}, where each node updates its local copy by a weighted average step on local copies from its neighbors, followed by a minimization step along its local gradient descent direction. With a fixed step size, DGD only converges to a neighborhood of $x^*$. This disadvantage can in part be explained by the observation that the local gradient is generally not a satisfactory estimate of the global one, even though the local copies are all equal to the optimal solution $x^*$. To construct a better local direction, various works with bias-correction techniques are proposed, such as primal-dual \cite{shi2015extra,srinivasa2019decentralized}, exact diffusion \cite{yuan2018exact1}, and gradient tracking \cite{li2020communication,koloskova2021improved}. For example, gradient tracking replaces the local gradient in DGD with a local gradient approximation obtained by the dynamic average consensus (DAC) technique, which leads to exact convergence with a fixed step size. Unified frameworks for first-order algorithms are investigated in \cite{alghunaim2020decentralized,xu2021distributed}.

In the centralized setting, it is well-known that convergence of first-order algorithms suffer from dependence on $\kappa_F$, the condition number of the objective function $F$. In the decentralized setting, the dependence is not only on $\kappa_F$ but also on the network. Specifically, let  $\sigma$ be the second largest singular value of the mixing matrix used in decentralized optimization and $\frac{1}{1-\sigma}$ be the condition number of the underlying communication graph. A network with larger $\frac{1}{1-\sigma}$ has a weaker information diffusion ability. For  strongly convex and smooth problems, the work \cite{scaman2017optimal} establishes the lower bounds 
$O\left(\sqrt{\kappa_{F}} \log \frac{1}{\epsilon}\right)$  and  $O\left(\sqrt{\frac{\kappa_{F}}{1-\sigma}} \log \frac{1}{\epsilon}\right)$ on the computation and communication costs for decentralized first-order algorithms to reach an $\epsilon$-optimal solution, respectively.
The lower bounds are  achieved or nearly achieved in  \cite{li2018sharp,ye2020multi}, where  multi-step consensus is introduced to balance the computation and communication costs.


\subsection{Decentralized Second-order Methods}
In the centralized setting, Newton's method is proved to have a locally quadratic convergence rate that is independent of $\kappa_F$. However, whether there is a communication-efficient decentralized variant of Newton's method with $\kappa_F$-independent  super-linear convergence rate under mild assumptions is still an open question. On the one hand, some decentralized second-order methods have provably faster rates but suffer from inexact convergence, high communication cost, or requiring strict assumptions. The work  \cite{mansoori2017superlinearly} extends the network Newton's method in \cite{mokhtari2016network} for minimizing a penalized approximation of \eqref{eq-obj} and shows that the convergence rate is super-linear in a specific neighborhood near the optimal solution of the penalized problem. Beyond this neighborhood, the rate becomes linear. The work \cite{tutunov2019distributed} proposes an approximate Newton's method for the dual problem of \eqref{eq-obj-constraint} and establishes a super-linear convergence rate within a neighborhood of the primal-dual optimal solution. However, in each iteration, it needs to solve the primal problem exactly to obtain the dual gradient and call a solver to obtain the local Newton direction. The work  \cite{zhang2020distributed}  proposes a decentralized adaptive Newton's method, which uses the communication-inefficient flooding technique to make  the global gradient and Hessian available to each node. In this way, each node conducts exactly the same update so that the global super-linear convergence rate of the centralized Newton's method with Polyak's adaptive step size still holds.
The work  \cite{di2016next} proposes a decentralized Newton-type method with cubic regularization and proves faster convergence up to statistical error under the assumption that each local Hessian is close enough to the global Hessian. The work  \cite{berglund2021distributed} studies
quadratic local objective functions and shows that for a distributed Newton's method,  the computation complexity depends only logarithmically on $\kappa_F$ with the help of exchanging the entire Hessian matrices. The algorithm in \cite{berglund2021distributed} is close to that in \cite{zanella2011newton}, but the latter has no  convergence rate guarantee.

On the other hand, some works are devoted to developing efficient decentralized second-order algorithms with similar computation and communication costs per iteration to  first-order algorithms. However, these methods only have globally linear convergence rate, which is no better than that of first-order methods \cite{mokhtari2016dqm,eisen2019primal,sun2019distributed,zhang2021newton,wei2021decentralized,zhang2022variance,zhang2022variance-2}. Here we summarize several reasons for the lack of provably faster rates: (i) The information fusion over the network is realized by averaging consensus, whose convergence rate is at most linear \cite{xiao2004fast}. (ii) The global Hessian is estimated just from local Hessians \cite{mokhtari2016dqm,eisen2019primal,sun2019distributed,zhang2021newton,wei2021decentralized} or from Hessian inverse approximations constructed with local gradient approximations \cite{zhang2022variance,zhang2022variance-2}. The purpose is to avoid the communication of entire Hessian matrices, but a downside is that the nodes are unable to fully utilize the global second-order information. (iii) The global Hessian matrices are typically assumed to be uniformly bounded, which simplifies the analysis but leads to under-utilization of the curvature information \cite{mokhtari2016dqm,eisen2019primal,sun2019distributed,zhang2021newton,wei2021decentralized,zhang2022variance,zhang2022variance-2}. (iv) For the centralized Newton's method, backtracking line search is vital for convergence analysis. It adaptively gives a small step size at the early stage to guarantee global convergence with arbitrary initialization and always gives a unit step size after reaching a neighborhood of the optimal solution to guarantee locally quadratic convergence rate. However, backtracking line search is not affordable in the decentralized setting since it is expensive for all the nodes to jointly calculate the global objective function value.

To the best of our knowledge, there is no decentralized Newton's method that, under mild assumptions, is not only  communication-efficient but also inherits the $\kappa_F$-independent  super-linear convergence rate of the centralized Newton's method. Therefore, in this paper we aim to address the following question: Can we design a communication-efficient decentralized Newton's method that has a provably $\kappa_F$-independent  super-linear convergence rate?

\subsection{Major Contributions}
To answer these questions, we  propose a decentralized Newton's method with multi-step consensus and compression and establish its convergence rate.  Roughly speaking, our method proceeds as follows. In each iteration, each node moves one step along a local approximated Newton direction, followed by variable averaging to improve consensus. To construct the local approximated Newton direction, we use the DAC technique to obtain a gradient approximation and a Hessian approximation, which track the global gradient and  global Hessian, respectively.
To avoid having each node to transmit the entire local Hessian approximation, we design a compression procedure with error compensation to estimate the global Hessian in a communication-efficient way. In other words, each node is able to obtain more accurate curvature information by exchanging the compressed local Hessian approximations with its neighbors, without incurring a high communication cost.
In addition, to balance between computation and communication costs, we use multi-step consensus for communicating the local copies of the decision variable and the local gradient approximations. Multi-step consensus helps to obtain not only a globally linear rate that is independent of the graph but also a faster local convergence rate.

Theoretically, we show, with a novel analysis, that our proposed method enjoys a provably faster convergence rate than those of decentralized first-order methods. The convergence process is split into two stages. In stage I, we use a small step size and get  globally linear convergence at the contraction rate of $1-O\left(\frac{1}{\kappa_F}\right) \min\left\{ \frac{(1-\sigma^2)^3}{\sigma^{m-1}}, \frac{1}{2}\right\}$ with arbitrary initialization. Here,  $\frac{1}{1-\sigma}$ is the condition number of the graph, $\kappa_{F}$ is the condition number of the objective function, and $m$ is the number of consensus inner steps. This globally linear rate holds for any $m\ge 1$. As a special case, when $m\ge \frac{\log2(1-\sigma^2)^3}{\log \sigma}+1$, the contraction rate in stage I becomes $1-O\left(\frac{1}{\kappa_F}\right)$, which is independent of the graph.  When the local copies are close enough to the optimal solution, the algorithm enters stage II, where we use a unit step size  and get the  faster  convergence  rate of $\sigma^{\frac{m}{2}}$.  This implies that we have a $\kappa_F$-independent linear rate when $m$ is a constant and an asymptotically super-linear rate when $m$ goes to infinity. No matter what $m$ is, the total communication complexity in stage II is $O\left(\frac{1}{-\log \sigma} \log \frac{1}{\epsilon}\right)$.
A comparison of the computational complexity of existing decentralized first-order and second-order methods are summarized in Table \ref{table-compare}.
\begin{table}[h]
	\centering
	\caption{Computational complexity to reach an $\epsilon$-optimal solution for decentralized consensus optimization algorithms}
	\label{table-compare}
	\begin{tabular}{c|c}
		\hline
		Algorithm   & Computational complexity   \\ \hline
		DLM \cite{ling2015dlm} &   $O\left(\max\left\{\frac{\kappa_F^2\lambda_{\max}(\mathcal{L}_u)}{\lambda_{\min}(\mathcal{L}_u)},\frac{(\lambda_{\max}(\mathcal{L}_u))^2}{\lambda_{\min}(\mathcal{L}_u)\hat{\lambda}_{\min}(\mathcal{L}_o)}\right\}\log\frac{1}{\epsilon}\right)\footnotemark{}$ \\ \hline
		EXTRA \cite{shi2015extra}  &    $O\left(\frac{\kappa_F^2}{1-\sigma}\log\frac{1}{\epsilon}\right)$ \footnotemark{}\\ \hline
		GT \cite{qu2017harnessing} &   $O\left(\frac{\kappa_F^2}{(1-\sigma)^2}\log\frac{1}{\epsilon}\right)$ \\ \hline
		DQM \cite{mokhtari2016dqm} &   $O\left(\max\left\{\left(\frac{\lambda_{\max}(\mathcal{L}_u)}{\hat{\lambda}_{\min}(\mathcal{L}_o)}\right)^2,  \frac{\kappa_F\lambda_{\max}(\mathcal{L}_u)}{\hat{\lambda}_{\min}(\mathcal{L}_o)} \right\} \log\frac{1}{\epsilon}\right)$ \footnotemark{} \\ \hline
		ESOM \cite{mokhtari2016decentralized} &  $O\left(  \frac{\kappa_F^2}{\hat{\lambda}_{\min}(I_n-W)}  \log\frac{1}{\epsilon}\right)$ \footnotemark{} \\ \hline
		NT  \cite{zhang2021newton}  & $O\left(\max\left\{ \kappa_F^2+\kappa_F \sqrt{\kappa_g}, \tfrac{\kappa_g^{3/2}}{\kappa_F} + \kappa_F\sqrt{\kappa_g} \right\} \log{\frac{1}{\epsilon}} \right)$ \footnotemark{}\\ \hline
		Stage I &$O\left(\kappa_F\max\left\{\frac{{\sigma^{m-1}}}{(1-\sigma^2)^3},2\right\} \log \frac{1}{\epsilon}\right)$\\\hline
		Stage II & $O\left(\frac{1}{-m \log\sigma}\log \frac{1}{\epsilon}\right)$ \footnotemark{}\\ \hline	
	\end{tabular}
\end{table}
\footnotetext[1]{Here, $\mathcal{L}_u$ and $\mathcal{L}_o$ are the unoriented and oriented Laplacian defined in \cite{ling2015dlm}, respectively. The rate is obtained when $\alpha=\frac{L_1\kappa_F}{\lambda_{\min}(\mathcal{L}_u)}$ and $\epsilon={L_1\kappa_F}$ with $L_1$ being the constant of Lipschitz continuous gradient.}
\footnotetext[2]{Here, $W$ is the mixing matrix,   $\tilde{W}=\frac{I_n+W}{2}$, and $\alpha=\frac{0.5\hat{\lambda}_{\min}(\tilde{W})}{L_1\kappa_F}$.}
\footnotetext[3]{Here, the convergence is local and $\alpha=\frac{L_1}{\lambda_{\max}(\mathcal{L}_u)\hat{\lambda}_{\min}(\mathcal{L}_0)}$.}
\footnotetext[4]{Here, the convergence is local and the number of consensus inner steps goes to infinity.}
\footnotetext[5]{Here, $\kappa_g=\frac{\lambda_{\max}(I_n-W)}{\hat{\lambda}_{\min}(I_n-W)}$ as defined in \cite{zhang2021newton} and the convergence is local.}
\footnotetext[6]{Here, $m$ is set as a constant. }

\textbf{Notation.}
We use
$I_d$ to denote the $d\times d$ identity matrix, $1_n$ to denote the $n$-dimensional column vector of all ones, $\|\cdot\|$  to denote the Euclidean norm of a vector or the largest singular value of a matrix,  $\|\cdot\|_F$ to denote the Frobenius norm, and  $\otimes$ to denote the Kronecker product.
For a matrix $A$, we use $A\ge 0$ to indicate that each entry of $A$ is non-negative. For a symmetric matrix $A$, we use $A\succeq 0$ and $A\succ0$ to indicate that $A$ is
positive semidefinite and positive definite, respectively. For two matrices $A$ and $B$ of the same dimensions, we use $A\ge B$,  $A\succeq B$, and $ A\succ B$ to indicate that $A-B\ge 0$,  $A-B \succeq 0$, and $A-B \succ 0$, respectively.
 We use  $\lambda_{\max}(\cdot)$, $\lambda_{\min}(\cdot)$, and $\hat{\lambda}_{\min}$ to denote the largest, smallest, and the smallest positive eigenvalues of a matrix, respectively.

For $x_1, \ldots, x_n \in \mathbb{R}^d$, we define the aggregated   variable  $\bx=[x_1;\ldots;x_n]\in \mathbb{R}^{nd}$. The  aggregated variables $\bd$ and $\bg$
are defined similarly.
We define the average
variable over all the nodes at time step $k$ as
$\ox^k=\frac{1}{n}\sum_{i=1}^n x_i^k\in\mathbb{R}^d$.
The average variables $\od^k$ and $\og^k$ are defined similarly.
We define the aggregated gradient at time step $k$ as $\nabla f(\bx^k)=\left[\nabla f_1(x_1^k);\ldots; \nabla f_n(x_n^k)\right]\in \mathbb{R}^{nd}$, the  average of all the local gradients at the local variables as $\overline{\nabla f }(\bx^k)=\frac{1}{n} \sum_{i=1}^{n} \nabla f_i(x_i^k) \in \mathbb{R}^d$, and the average of all
the local gradients at the common average $\ox^k$ as $\nabla F(\ox^k)=\frac{1}{n} \sum_{i=1}^{n} \nabla f_i(\ox^k)\in \mathbb{R}^d$. The aggregated Hessian $\nabla^2 f(\bx^k)\in \mathbb{R}^{nd\times
d}$, the average of all the local Hessian at the local variables $\overline{\nabla^2 f}(\bx^k)\in \mathbb{R}^{d\times d}$, and the average of all the local Hessians at the common average $\nabla^2 F(\ox^k)\in \mathbb{R}^{d\times d}$  are defined similarly.
Given the matrices $H_i^k \in \mathbb{R}^{d \times d}$, we define the aggregated matrix   $\bH^{k}=[H_1^{k};\ldots;H_n^{k}]\in \mathbb{R}^{nd\times d}$.  The aggregated matrices $\bE^k$, $\tilde{\bH}^k$, and $\hat{\bH}^k$ are defined similarly. We define the average variable over all the nodes at time step $k$ as $\oH^k=\frac{1}{n}\sum_{i=1}^{n} H_i^k$ and  $\diag\{H_i\}\in \mathbb{R}^{nd \times nd}$ as  the block diagonal matrix whose $i$-th block is $H_i\in \mathbb{R}^{d\times d}$.  We define
$\bW=W\otimes I_d\in\mathbb{R}^{nd \times nd}$ and
{$\bW^{\infty}=\frac{1_n 1_n^T}{n}\otimes I_d\in\mathbb{R}^{nd \times nd}$}.

\section{Problem Setting and Algorithm Development}
In this section, we give the problem setting  and the basic assumptions. Then,
we propose a decentralized Newton's method with multi-step consensus and compression.
\subsection{Problem Setting}
We consider an undirected, connected graph  $\mathcal{G}=(\mathcal{V}, \mathcal{E})$, where $\mathcal{V}=\{1,\ldots,n\}$ is the set of nodes and $\mathcal{E}\subseteq \mathcal{V}\times \mathcal{V}$ is the set of edges. We use $(i,j)\in \mathcal{E}$ to indicate that nodes $i$ and $j$ are neighbors, and neighbors are allowed to communicate with each other. We use $\mathcal{N}_i$ to denote the set of neighbors of node $i$ and itself. We introduce a mixing matrix $W \in \mathbb{R}^{n \times n}$ to model the communication among nodes. The mixing matrix is assumed to satisfy the following:
\begin{asp}\label{asm-W}
	The mixing matrix $W$ is  non-negative, symmetric, and doubly stochastic (i.e., $w_{ij} \ge 0$ for all $i, j \in \{1,\ldots,n\}$, $W=W^T$, and $W1_n = 1_n$) with $w_{ij} = 0$ if and only if $j \notin $ $\mathcal{N}_{i}$.
\end{asp}

Assumption \ref{asm-W} implies that the null space of $I_n - W$ is $\mbox{span}(1_n)$, the eigenvalues of $W$ lie in $(-1,1]$, and  1 is an eigenvalue of $W$ of multiplicity 1.
Let $\sigma$ be the second largest singular value of $W$. Under Assumption \ref{asm-W}, we have
\begin{equation*}
	\sigma=\|W - W^\infty\| <1.
\end{equation*}
Usually, $\sigma$ is used to represent the connectedness of the graph \cite{nedic2009distributed,shi2015extra}.
Mixing matrices satisfying Assumption \ref{asm-W} are frequently used in decentralized optimization over an undirected, connected network; see, e.g., \cite{boyd2004fastest} for details.

Throughout the paper, we make the following assumptions on the local objective functions.
\begin{asp}\label{asp:lipschitz-continuous}
Each $f_i$ is twice-continuously differentiable. Both the gradient and Hessian are Lipschitz continuous, i.e.,
\begin{align}
\| \nabla f_i(x) - \nabla f_i(y) \| \leq L_1 \|x - y\|
\end{align}
and
\begin{align}
\| \nabla^2 f_i(x) - \nabla^2 f_i(y) \| \leq L_2 \|x - y\|
\end{align}
for all $x,y \in \mathbb{R}^d$, where $L_1>0$ and $L_2>0$ are the Lipschitz constants of the local gradient and local Hessian, respectively.
\end{asp}
\begin{asp}\label{ass-strongly-cvx}
The entire objective $F$ is $\mu$-strongly convex for some constant $\mu>0$, i.e.,
\begin{align}
\nabla^2 F(x) \succeq \mu I_d
\end{align}
for all $x\in \mathbb{R}^d$, where $\mu$ is the strong convexity constant.
\end{asp}

We should remark that in Assumption \ref{ass-strongly-cvx}, we only assume the entire objective function $F$ to be strongly convex. The local objective function $f_i$ on each node could even be nonconvex, which makes our analysis more general.

To avoid having each node to communicate the entire local Hessian approximation, we design a compression procedure with a deterministic contractive compression operator  $\mathcal{Q}(\cdot)$ that satisfies the following assumption.
\begin{asp}\label{asp:compression}
The deterministic contractive compression operator $\mathcal{Q}: \mathbb{R}^{d \times d} \rightarrow \mathbb{R}^{d \times d}$ satisfies
\begin{align}\label{eq-contractive}
\|\mathcal{Q} (A) - A\|_F \leq (1 - \delta) \|A\|_F
\end{align}
for all $A\in \mathbb{R}^{d\times d}$, where
 $\delta \in (0, 1]$ is a constant determined by the compression operator.
\end{asp}

We now present two concrete examples of such an operator.
Let $A=\sum_{i=1}^{d}\sigma_i u_i v_i^T$ be the singular value decomposition of the matrix $A$, where $\sigma_i$ is the $i$-th largest singular value with $u_i$ and $v_i$ being the corresponding singular vectors. The Rank-$K$ compression operator outputs $\mathcal{Q}(A)=\sum_{i=1}^{K} \sigma_i u_i v_i^T $, which is a rank-$K$ approximation of $A$. For Top-$K$ compression operator, $\mathcal{Q}(A)$ keeps the $K$ largest entries (in terms of the absolute value) of the matrix $A$ and sets the other entries as zero. For more details of compression operators, one can refer to \cite{safaryan2021fednl,islamov2021distributed}.

The following proposition shows that both the Rank-$K$ and Top-$K$ compression operators satisfy Assumption \ref{asp:compression}.

\begin{props}\label{prop-compression operator}
For the Rank-$K$ and Top-$K$ compression operators, Assumption \ref{asp:compression} holds with $\delta=\frac{K}{2d}$ and $\delta=\frac{K}{2d^2}$, respectively.
\end{props}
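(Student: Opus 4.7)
The plan is to handle the two compression operators in parallel, since both reduce to the same kind of inequality once the error is expressed in a suitable orthogonal expansion: for Rank-$K$ in terms of singular values, and for Top-$K$ in terms of matrix entries. In both cases the error equals a tail sum, and the key fact is that the tail sum is small relative to the total because the entries it contains are (by construction) the smallest ones.

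First I would treat Rank-$K$. Given the SVD $A = \sum_{i=1}^d \sigma_i u_i v_i^T$ with $\sigma_1\ge\cdots\ge\sigma_d\ge 0$, the operator returns $\mathcal{Q}(A)=\sum_{i=1}^K \sigma_i u_i v_i^T$, so by orthonormality of the $u_i v_i^T$ terms,
\begin{equation*}
\|\mathcal{Q}(A)-A\|_F^2 \;=\; \sum_{i=K+1}^d \sigma_i^2, \qquad \|A\|_F^2 \;=\; \sum_{i=1}^d \sigma_i^2.
\end{equation*}
Because the $\sigma_i^2$ are in decreasing order, the average of the bottom $d-K$ terms is at most the average of the top $K$ terms, which after rearrangement yields $\sum_{i=K+1}^d \sigma_i^2 \le \tfrac{d-K}{d}\|A\|_F^2$. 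Hence $\|\mathcal{Q}(A)-A\|_F \le \sqrt{1-K/d}\,\|A\|_F$. I would then close the argument with the elementary inequality $\sqrt{1-t}\le 1-t/2$ for $t\in[0,1]$, applied with $t=K/d$, giving the desired $\delta=K/(2d)$.

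For Top-$K$, the proof is structurally identical but carried out at the level of the $d^2$ matrix entries of $A$ rather than its singular values. Since $\mathcal{Q}(A)$ preserves the $K$ entries of largest absolute value and zeroes out the remaining $d^2-K$, we get
\begin{equation*}
\|\mathcal{Q}(A)-A\|_F^2 \;=\; \sum_{(i,j)\notin S} a_{ij}^2,
\end{equation*}
where $S$ indexes the chosen $K$ entries. The same averaging argument (the smallest $d^2-K$ squared entries account for at most a $(d^2-K)/d^2$ fraction of the total) yields $\|\mathcal{Q}(A)-A\|_F^2\le (1-K/d^2)\|A\|_F^2$, and $\sqrt{1-K/d^2}\le 1-K/(2d^2)$ finishes the case.

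No step here is subtle; the only ``trick'' is the tightening of $\sqrt{1-t}$ to $1-t/2$, which is needed to match the stated $\delta$'s exactly rather than settling for the weaker $\sqrt{1-K/d}$ and $\sqrt{1-K/d^2}$ contraction factors. Everything else is bookkeeping on orthogonal expansions, so I do not anticipate any real obstacle.
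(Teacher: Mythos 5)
Your proof is correct and follows essentially the same route as the paper: express the error as the tail sum of squared singular values (resp.\ squared entries), use the ordering to bound the tail by a $\tfrac{d-K}{d}$ (resp.\ $\tfrac{d^2-K}{d^2}$) fraction of $\|A\|_F^2$, and finish with $\sqrt{1-t}\le 1-t/2$. The only cosmetic difference is that the paper routes the averaging step through the Pythagorean identity $\|A\|_F^2=\|\mathcal{Q}(A)-A\|_F^2+\|\mathcal{Q}(A)\|_F^2$, which is equivalent to your direct head--tail comparison.
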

\begin{proof}
See Appendix \ref{app-compression operator}.
\end{proof}

\begin{rmk}
Different from the random compression operators used in first-order algorithms  \cite{richtarik2021ef21,qian2021basis}, we use  deterministic compression operators. This is because any realization not satisfying \eqref{eq-contractive} may lead to a non-positive semidefinite Hessian approximation and thus leads to the failure of the proposed Newton's method.
\end{rmk}

\subsection{Algorithm Development}
In this section, we propose a decentralized Newton's method with multi-step consensus and compression. In iteration $k$, node $i$ first conducts one minimization step along a local approximated Newton direction $d_i$ and then communicates the result with its neighbors for $m$ rounds to compute
\begin{align}\label{eq-alg-x}
x_{i}^{k+1}=\sum_{j\in \mathcal{N}_i} (W^m)_{i j} \left( x_{j}^{k}-\alpha d_{j}^{k} \right).
\end{align}
Here, $\alpha>0$ is a step size and $(W^m)_{i j}$ is  the $(i,j)$-th  entry of $W^m$. Such multi-step consensus costs $m$ rounds of communication. As we will show in the next section, the multi-step consensus balances between computation and communication and is vital to get a provably fast convergence rate.

To update the local direction,
we use the DAC technique to obtain a gradient approximation and a Hessian approximation, which track the global gradient and global Hessian, respectively. The gradient approximation $g_i^{k+1}$ on node $i$ is computed by
\begin{align}\label{eq-alg-g}
g_{i}^{k+1}=\sum_{j\in \mathcal{N}_i} (W^m)_{i j} \left( g_{j}^{k}+\nabla f_j(x_j^{k+1})-\nabla f_j(x_j^k)\right)
\end{align}
with initialization $g_i^0=\nabla f_i(x_i^0)$.
Similar to \eqref{eq-alg-x}, we use multi-step consensus to make $g_i^{k+1}$  a more accurate gradient approximation.

\begin{algorithm}[h]
	\caption{Decentralized Newton's method}
	\label{alg-1}
	\begin{algorithmic}
		\STATE $\textbf{Input:}$ $\bx^0, \bd^0, g_i^0=\nabla f_i(x_i^0), H_i^0=\nabla^2 f_i(x_i^0)$, $E_i^0$, $\tilde{H}_i^0$,  $\alpha$,  $\gamma$, $m$, $M$.
		\FOR {$k = 0, 1, 2, \ldots$ }
		\STATE $\bx^{k + 1} = \bW^m  (\bx^k - \alpha \bd^k)$
		\STATE $\bg^{k + 1} = \bW^m  \left(\bg^k + \nabla f(\bx^{k + 1}) - \nabla f(\bx^{k}) \right)$\\
		\rule[0.1\baselineskip]{0.42\textwidth}{0.1pt}
		\STATE \bf Compression Procedure\\
		\STATE	$\bE^{k + 1} = \bE^k + \bH^k -\tilde{\bH}^k - \mathcal{Q} (\bE^k + \bH^k -\tilde{\bH}^k)$
		\STATE	$\tilde{\bH}^{k + 1} = \tilde{\bH}^k +  \mathcal{Q} (\bH^k -\tilde{\bH}^k)$
		\STATE$\hat{\bH}^{k} = \tilde{\bH}^k + \mathcal{Q} (\bE^k + \bH^k -\tilde{\bH}^k)$
		\STATE $\bH^{k + 1} = \bH^k - \gamma (I_{nd} - \bW) \hat{\bH}^{k} \!+\! \nabla^2 f(\bx^{k + 1})\! - \!\nabla^2 f(\bx^k)$\\
		\rule[0.1\baselineskip]{0.42\textwidth}{0.1pt}
		\STATE $ \bd^{k + 1} \approx \left( \diag\{H_i^{k + 1}\} +M I_{nd} \right)^{-1} \bg^{k + 1}$
		\ENDFOR
	\end{algorithmic}
\end{algorithm}

To obtain the Hessian approximation, we also  use DAC to mix the second-order curvature information over the network but keep in mind that communicating the entire local Hessian approximation leads to a high communication cost. Thus, we design a compression procedure with error compensation to estimate the global Hessian in a communication-efficient way. In other words, each node is able to obtain more accurate global curvature information by exchanging the compressed local Hessian approximation with its neighbors, without incurring a high communication cost. The Hessian approximation $H_i^{k+1}$ on node $i$ is given by
\begin{align}\label{eq-alg-H}
&h_i^{k + 1} = h_i^k +  \mathcal{Q} (H_i^k - h_i^k),\nonumber\\
&  E_i^{k + 1} = E_i^k + H_i^k - h_i^k - \mathcal{Q} (E_i^k + H_i^k - h_i^k),\\
&\hat{H}_i^{k} = h_i^k + \mathcal{Q} (E_i^k + H_i^k - h_i^k),\nonumber\\
&H_i^{k + 1} = H_i^k \!-\! \gamma \sum_{j\in \mathcal{N}_i}w_{ij}( \hat{H}_i^k - \hat{H}_j^k) \! +\! \nabla^2 f_i(x_i^{k + 1}) \!-\!\nabla^2 f_i(x_i^k)\nonumber
\end{align}
{with initialization $H_i^0=\nabla^2 f_i(x_i^0)$, where $\gamma>0$ is a parameter.
Compared with DAC without compression, i.e., $H_i^{k + 1} = H_i^k - \gamma \sum_{j\in \mathcal{N}_i}w_{ij}( H_i^k - H_j^k)  + \nabla^2 f_i(x_i^{k + 1}) - \nabla^2 f_i(x_i^k)$, the term $w_{ij}( H_i^k - H_j^k) $ is replaced by $ w_{ij}( \hat{H}_i^k - \hat{H}_j^k)$, which can be constructed with compressed communication.
There are two techniques to compensate for the compression error in the construction of $\hat{H}_i^k$: (i) We introduce $\tilde{H}_i^k$ as a counterpart of $H_i^k$ and compress their difference $H_i^k-\tilde{H}_i^k$. (ii) We add $E_i^{k}$---the compression error in the $(k-1)$-st iteration---back into the difference $H_i^k-\tilde{H}_i^k$ in the $k$-th iteration for error feedback and compress $E_i^k+H_i^k-\tilde{H}_i^k$. Intuitively, there is no compression error when the algorithm converges so that $H_i^k\to \tilde{H}_i^k$ and $E_i^k\to 0$. This intuition will be verified by our analysis later (see Proposition \ref{prop-2}). It is worth noting that we only use one round of communication per iteration to construct the Hessian approximation.

With the local gradient and Hessian approximations, we are ready to update the local direction. To avoid calculating the inverse of the local Hessian approximation, we
utilize an early-terminated conjugate gradient (CG) method \cite{dai1999nonlinear} to obtain a local direction $d_i^{k+1}$ via
\begin{align}\label{eq-alg-d}
\left(H_i^{k+1}+MI_d\right)d_i^{k+1}\approx g_i^{k+1},
\end{align}
where $M >0$ is a regularization parameter. The accuracy of the CG step will be given later (see Fact \ref{fact-CG}). The proposed algorithm can be written in a compact form as summarized in Algorithm \ref{alg-1}. With a slight abuse of notation in the compact form, given an aggregated matrix $A=[A_1;\ldots;A_n]\in\mathbb{R}^{nd\times d}$, we use $\mathcal{Q}(\cdot)$ to denote the block-wise compression operator such that $\mathcal{Q}(A)=[\mathcal{Q}(A_1);\ldots;\mathcal{Q}(A_n)]\in\mathbb{R}^{nd \times d}$.

In Algorithm \ref{alg-1}, computing
$\bW\hat{\bH}^k$ requires communicating the uncompressed matrices $\hat{H}_1^k, \ldots, \hat{H}_n^k $, which is not communication-efficient.  As shown in \cite{liao2021compressed}, there is an equivalent but communication-efficient implementation of the compression procedure, summarized in Algorithm \ref{alg-2}. The basic idea is to introduce an auxiliary variable $\tilde{\bH}_w^k$ that is equal to $\bW\tilde{\bH}^k$ and use it to construct $\hat{\bH}_w^k$ that is equal to $\bW\hat{\bH}^k$.  Algorithm \ref{alg-2} is communication-efficient since the nodes only communicate the compressed variables $\bQ^k$ and $\hat{\bQ}^k$. For simplicity, we study Algorithm \ref{alg-1} in our convergence analysis.

\begin{algorithm}[h]
\caption{Communication-efficient implementation}
\label{alg-2}
\begin{algorithmic}
\STATE {$\textbf{Input:}$ $\bx^0, \bd^0, g_i^0=\nabla f_i(x_i^0), H_i^0=\nabla^2 f_i(x_i^0)$, $\tilde{\bH}^0$, $\tilde{\bH}_w^0=\bW\tilde{\bH}^0$, $\alpha$,  $\gamma$, $m$, $M$. }
\FOR{$k = 0, 1, 2, \ldots$ }
\STATE 	$\bx^{k + 1} = \bW^m  (\bx^k - \alpha \bd^k)$
\STATE	$\bg^{k + 1} = \bW^m  \left(\bg^k + \nabla f(\bx^{k + 1}) - \nabla f(\bx^{k}) \right)$
\rule[0.1\baselineskip]{0.42\textwidth}{0.1pt}
\STATE \bf Compression Procedure\\
\STATE 	${\bQ}^k=\mathcal{Q} (\bH^k -\tilde{\bH}^k)$
\STATE 	$\hat{\bQ}^k=\mathcal{Q} (\bE^k + \bH^k -\tilde{\bH}^k)$
\STATE  $\tilde{\bH}^{k + 1} = \tilde{\bH}^k +  \bQ^k$
\STATE 	$\tilde{\bH}_w^{k + 1} = \tilde{\bH}_w^k +  \bW \bQ^k$
\STATE 	$\hat{\bH}^{k} = \tilde{\bH}^k + \hat{\bQ}^k$
\STATE 	$\hat{\bH}_w^{k} = \tilde{\bH}_w^k + \bW\hat{\bQ}^k$
\STATE 	$\bE^{k + 1} = \bE^k + \bH^k -\tilde{\bH}^k - \hat{\bQ}^k$
\STATE 	$\bH^{k + 1} = \bH^k - \gamma (\hat{\bH}^k - \hat{\bH}_w^k)  + \nabla^2 f(\bx^{k + 1}) \!-\! \nabla^2 f(\bx^k)$
\rule[0.1\baselineskip]{0.42\textwidth}{0.1pt}
\STATE $ \bd^{k + 1} \approx \left( \diag\{H_i^{k + 1}\} +M I_{nd} \right)^{-1} \bg^{k + 1}$
\ENDFOR
\end{algorithmic}
\end{algorithm}

\section{Convergence Analysis}\label{sec-analysis}
In this section, we conduct a novel two-stage analysis of our proposed Algorithm \ref{alg-1} and establish its convergence rate. Our analysis reveals that Algorithm \ref{alg-1} is provably faster than the first-order algorithms. For the centralized Newton's method, to get a globally linear convergence rate with arbitrary initialization and a locally quadratic convergence rate, one often resorts to backtracking line search, which adaptively gives a small step size at the early stage and always gives a unit step size within a neighborhood of the optimal solution \cite{nocedal2006numerical}. However, backtracking line search becomes expensive in the decentralized setting. Nevertheless, we can mimic the process of backtracking line search and split the convergence process into two stages: The algorithm uses a small step size in stage I and converges linearly until the local copies are close enough to the optimal solution. Then, the algorithm enters stage II and starts to use a unit step size; we will show a local faster-than-linear rate by taking advantage of the curvature information which is not exploited in stage I.

Before starting the analysis, we specify the accuracy of the CG method with the following fact \cite{dai1999nonlinear}.
\begin{fct}\label{fact-CG}
With at most $d$ iterations for each node, the CG step yields
\begin{align}\label{ineq:inexact_inverse}
\left( \diag\{H_i^{k + 1}\}+M I_{nd} \right) \bd^{k + 1} = \bg^{k + 1} + \br^{k + 1}
\end{align}
with
$$\| \br^{k + 1}\| \leq c_k \|\bg^{k + 1}\|$$
for any $0\le c_k\le 1$.
\end{fct}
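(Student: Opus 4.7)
The plan is to reduce the claim to the classical finite-termination and monotone-residual properties of the conjugate gradient (CG) method applied to $n$ decoupled subsystems. Because $\diag\{H_i^{k+1}\}+MI_{nd}$ is block diagonal with $n$ blocks of size $d\times d$, the linear system $\left(\diag\{H_i^{k+1}\}+MI_{nd}\right)\bd^{k+1}=\bg^{k+1}$ splits into $n$ independent systems $(H_i^{k+1}+MI_d)d_i^{k+1}=g_i^{k+1}$, one per node, which each node solves locally and in parallel with its own CG process. First I would verify that, with the regularization $M>0$ chosen large enough to dominate any possible indefiniteness of $H_i^{k+1}$, every coefficient matrix $H_i^{k+1}+MI_d$ is symmetric positive definite, so CG is well defined on each block.

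Next I would invoke two textbook properties of CG on a $d\times d$ symmetric positive definite system initialized at zero: (i) in exact arithmetic the iterates reach the exact solution in at most $d$ steps, yielding zero residual; (ii) the residual norm decreases monotonically from $\|g_i^{k+1}\|$ at iteration $0$ down to $0$ at iteration $d$. Together these imply that for any prescribed tolerance $c_k\in[0,1]$ there is an iteration index $\tau_i\le d$ at which the local residual $r_i^{k+1}\triangleq (H_i^{k+1}+MI_d)d_i^{k+1}-g_i^{k+1}$ satisfies $\|r_i^{k+1}\|\le c_k\|g_i^{k+1}\|$. Running each local CG for at most $d$ iterations therefore produces an approximate local Newton direction meeting any desired residual tolerance on every node.

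Finally, stacking the local residuals into $\br^{k+1}=[r_1^{k+1};\ldots;r_n^{k+1}]$ and using $\|\bg^{k+1}\|^2=\sum_{i=1}^n\|g_i^{k+1}\|^2$ yields
\[\|\br^{k+1}\|^2=\sum_{i=1}^n\|r_i^{k+1}\|^2\le c_k^2\sum_{i=1}^n\|g_i^{k+1}\|^2=c_k^2\|\bg^{k+1}\|^2,\]
which is precisely the claimed bound. The only substantive obstacle is confirming positive definiteness of each block so that CG can even be launched; beyond that, the result is a direct appeal to the classical convergence theory of CG as in \cite{dai1999nonlinear}, so I would not expect any further calculation to be necessary.
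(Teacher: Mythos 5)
Your argument is correct and matches what the paper intends: the statement is presented as a Fact with a citation to the classical CG literature rather than proved, and your route --- decoupling the block-diagonal system into $n$ independent $d\times d$ symmetric positive definite subsystems (positive definiteness being supplied by the bound $M_1 I_d \preceq H_i^{k+1}+MI_d$ established in Propositions~\ref{prop-3} and~\ref{prop-5}), invoking finite termination of CG in at most $d$ steps, and stacking the local residuals via $\|\br^{k+1}\|^2=\sum_{i=1}^n\|r_i^{k+1}\|^2$ --- is exactly the standard justification. One small inaccuracy: the Euclidean norm of the CG residual is \emph{not} monotonically decreasing in general (that is a property of MINRES, or of the $A$-norm of the error in CG), but this is harmless here because finite termination alone yields a zero residual after $d$ iterations, which satisfies the bound for any $c_k\ge 0$.
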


\subsection{Stage I: Globally Linear Convergence}
The convergence analysis of stage I is inspired by the work of \cite{zhang2022variance}, where a general framework of stochastic decentralized quasi-Newton methods is proposed. A globally linear convergence rate is established under the assumption that the constructed Hessian inverse approximations are positive definite with bounded eigenvalues. Similar to \cite{zhang2022variance}, we define two constants $M_1$ and $M_2$ as
\begin{align}\label{def:M1M2}
	M_1 \triangleq \mu \!+\! M - L_2 \sqrt{\frac{u_1^0}{n}} - \tilde{u}_2^0, ~ M_2 \triangleq L_1 \!+\! M + L_2 \sqrt{\frac{u_1^0}{n}}\!+\! \tilde{u}_2^0,
\end{align}
where $u_1^0$ is defined in \eqref{def:u1k} and $\tilde{u}_2^0$ is a constant given in \eqref{eq-tildeu20}. When choosing the parameter $M \geq  \tilde{u}_2^0 + L_2 \sqrt{\frac{u_1^0}{n}}$, we have $M_2 \geq M_1 >0$. We establish the globally linear convergence in stage I under  the condition
\begin{align}\label{eq-M1M2}
	M_1 I_d \preceq H_i^{k} +M I_d \preceq M_2 I_d, ~\forall i\in \{1,\ldots,n\}.
\end{align}
We will prove that the sequence $\{H_i^k\}_{k\ge 0}$ generated by Algorithm \ref{alg-1} satisfy condition \eqref{eq-M1M2} for all $k\ge 0$ (see Proposition \ref{prop-3}).

\subsubsection{Main Theorem for Stage I}

In stage I, we want to establish the globally linear convergence of Algorithm \ref{alg-1} with arbitrary initialization. To this end, it is sufficient to only use the uniform bound instead of the curvature information of Hessian approximations  given in \eqref{eq-M1M2}. This significantly simplifies our analysis of stage I. To begin, let us define
\begin{equation*}
\bq_1^k\triangleq
\begin{pmatrix}
	\|\bx^{k} - \bW^\infty \bx^{k} \|^2 \\
	\frac{1}{L_1^2} \|\bg^{k} - \bW^\infty \bg^{k} \|^2 \\
	\frac{n}{L_1} \left(F(\ox^{k}) - F(x^*)\right)
\end{pmatrix}
\end{equation*}
and
\begin{align}\label{def:u1k}
	u_1^k &\triangleq \left(1, \frac{(1 - \sigma^2)^2}{50}, 2\sigma^{m-1} \right) \bq_1^k,
\end{align}
where $u_1^k=0$ implies that $x_1^k=\cdots =x_n^k=x^*$ due to the strong convexity of $F$.
\begin{thm}\label{theo-1}
Under Assumptions  \ref{asm-W}--\ref{asp:compression}, if the parameters satisfy
\begin{align}\label{eq-a-c}
&M \geq L_2 \sqrt{\frac{u_1^0}{n}} + \tilde{u}_2^0, \quad \alpha \leq \min\left\{ \frac{M_1^2 (1 - \sigma^2)^3}{100 L_1 M_2{\sigma^{m-1}}},\frac{M_1^2}{200L_1M_2}\right\},\nonumber\\
 &c_k \leq \frac{M_1}{4M_2 \sqrt{2 \kappa_F}}, \quad
\gamma \leq \frac{ \delta^2 (1 - \sigma)}{50},
\end{align}
then for any  $m \geq 1$ and any $\bx^0$,
we have
\begin{align}\label{eq-rate-u}
u_1^{k+1} \leq \left(1 - \frac{\mu \alpha}{2 M_2} \right) u_1^k, \quad \forall k \geq 0.
\end{align}
\end{thm}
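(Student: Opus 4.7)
The plan is to carry out a Lyapunov-style analysis on the composite potential $u_1^k$, by deriving one-step recursions for each of its three components and then combining them with the prescribed weights $(1,(1-\sigma^2)^2/50,2\sigma^{m-1})$ to obtain the single linear contraction. The three ingredients are the $\bx$-consensus error, the gradient-tracking consensus error, and the scaled suboptimality $F(\ox^k)-F(x^*)$. Throughout, I would rely on the uniform bound \eqref{eq-M1M2} on $H_i^k+MI_d$ (so that stage~I is decoupled from the curvature dynamics, which will be treated separately in Propositions~2--3), together with Fact~\ref{fact-CG} to control the CG residual.

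First I would obtain the $\bx$-consensus recursion. Using $\bx^{k+1}=\bW^m(\bx^k-\alpha\bd^k)$, the identities $\bW^\infty\bW^m=\bW^\infty$ and $(\bW^m-\bW^\infty)\bW^\infty=0$, and $\|\bW^m-\bW^\infty\|\le\sigma^m$, I get
\begin{equation*}
\|\bx^{k+1}-\bW^\infty\bx^{k+1}\|\le\sigma^m\|\bx^k-\bW^\infty\bx^k\|+\alpha\sigma^m\|\bd^k-\bW^\infty\bd^k\|.
\end{equation*}
Squaring with a Young's inequality and invoking $\|\bd^k\|\le M_1^{-1}(1+c_k)\|\bg^k\|$ from Fact~\ref{fact-CG} produces an inequality that couples the $\bx$-consensus error to $\|\bg^k-\bW^\infty\bg^k\|$ and to $n\|\overline{\nabla f}(\bx^k)\|^2$, the latter being controlled by $F(\ox^k)-F(x^*)$ via $L_1$-smoothness and a consensus correction. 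The gradient-tracking recursion is obtained in the same way from $\bg^{k+1}=\bW^m(\bg^k+\nabla f(\bx^{k+1})-\nabla f(\bx^k))$: the conservation identity $\bW^\infty\bg^k=\bW^\infty\nabla f(\bx^k)$ (proved by a trivial induction) gives
\begin{equation*}
\|\bg^{k+1}-\bW^\infty\bg^{k+1}\|\le\sigma^m\|\bg^k-\bW^\infty\bg^k\|+\sigma^m L_1\|\bx^{k+1}-\bx^k\|,
\end{equation*}
after which $\|\bx^{k+1}-\bx^k\|$ is split into a consensus piece and an $\alpha\|\bd^k\|$ piece and bounded as above.

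The third and most delicate recursion is the descent inequality for $F(\ox^k)-F(x^*)$. Since $\ox^{k+1}=\ox^k-\alpha\od^k$ (because $\bW^\infty\bW^m=\bW^\infty$), $L_1$-smoothness of $F$ gives
\begin{equation*}
F(\ox^{k+1})\le F(\ox^k)-\alpha\langle\nabla F(\ox^k),\od^k\rangle+\tfrac{L_1\alpha^2}{2}\|\od^k\|^2.
\end{equation*}
To extract a $-\mu\alpha\|\nabla F(\ox^k)\|^2$ term I would rewrite $\od^k$ using the CG relation: $\od^k=\tfrac1n\sum_i(H_i^k+MI_d)^{-1}(g_i^k+r_i^k)$, then add and subtract $(\oH^k+MI_d)^{-1}\nabla F(\ox^k)$. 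The uniform bound \eqref{eq-M1M2} turns the leading term into a quadratic form bounded below by $M_2^{-1}\|\nabla F(\ox^k)\|^2$, while the error pieces are controlled by $\|\bx^k-\bW^\infty\bx^k\|$ (difference between $\overline{\nabla f}(\bx^k)$ and $\nabla F(\ox^k)$, and between the averaged Hessian approximation and $\nabla^2 F(\ox^k)$ via $L_2$-Lipschitzness), by $\|\bg^k-\bW^\infty\bg^k\|$, and by $c_k\|\bg^k\|\le c_k M_2\sqrt{2\kappa_F/n}\cdot\sqrt{F(\ox^k)-F(x^*)}$ plus consensus remainders. Using $\|\nabla F(\ox^k)\|^2\ge 2\mu(F(\ox^k)-F(x^*))$ converts the leading negative term into a $-\mu\alpha/M_2$ contraction on the suboptimality.

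Finally I would add the three recursions with weights $1$, $(1-\sigma^2)^2/(50L_1^2)$, $2\sigma^{m-1}n/L_1$. The key arithmetic is that the weight $(1-\sigma^2)^2/50$ is large enough to absorb the $\sigma^{2m}L_1^2\alpha^2\|\bd^k\|^2$ residue bled from the $\bx$-recursion into the $\bg$-recursion (this uses $\sigma^{2m}\le\sigma^{m-1}\sigma$ and the step-size bound $\alpha\le M_1^2(1-\sigma^2)^3/(100L_1M_2\sigma^{m-1})$), while the weight $2\sigma^{m-1}$ on the suboptimality is tuned so that the $\alpha^2L_1/2\cdot\|\od^k\|^2$ term and the CG-error $c_k$ term together consume at most half of the $\mu\alpha/M_2$ descent. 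The condition $c_k\le M_1/(4M_2\sqrt{2\kappa_F})$ is exactly what is needed for the CG residual to fit inside this half-budget. The main obstacle — and the place where the proof will be most technical — is the bookkeeping in this last combination step: several cross terms of the form $\sigma^m\alpha L_1\|\bd^k\|$ and $\sigma^m\|\bg^k\|$ have to be split via Young's inequality with constants chosen so that (i) the coefficient of $\|\bx^{k+1}-\bW^\infty\bx^{k+1}\|^2$ is at most $(1-\mu\alpha/(2M_2))$, (ii) the coefficient of $\|\bg^{k+1}-\bW^\infty\bg^{k+1}\|^2$ is at most $(1-\mu\alpha/(2M_2))(1-\sigma^2)^2/(50L_1^2)$, and (iii) the coefficient of $F(\ox^{k+1})-F(x^*)$ is at most $(1-\mu\alpha/(2M_2))\cdot 2\sigma^{m-1}n/L_1$, simultaneously. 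Once these three matching inequalities are verified for the stated parameter ranges, the claimed one-step contraction \eqref{eq-rate-u} follows immediately.
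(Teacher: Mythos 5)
Your one-step analysis is essentially the paper's Proposition \ref{prop-1}: the same three recursions (consensus error, gradient-tracking error, suboptimality descent), the same weighted combination with $\left(1,\tfrac{(1-\sigma^2)^2}{50},2\sigma^{m-1}\right)$, and the same roles for the bounds on $\alpha$ and $c_k$. That part of the plan is sound and matches the paper.

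The gap is that you take the uniform Hessian bound \eqref{eq-M1M2} as given ``throughout'' and defer it to ``Propositions 2--3'' without proving it, whereas Theorem \ref{theo-1} has no such hypothesis: \eqref{eq-M1M2} must itself be established for every $k\ge 0$, and doing so is \emph{not} decoupled from the contraction you are proving. The paper closes this loop by a joint induction (Proposition \ref{prop-3}): assuming \eqref{eq-M1M2} up to step $k-1$, the contraction of $u_1$ holds up to $k$; this decay is then fed into a separate three-term recursion for the compression error $\|\bE^k\|_F$, the difference $\|\bH^k-\tilde{\bH}^k\|_F$, and the Hessian tracking error $\|\bH^k-\bW^\infty\bH^k\|_F$ (Proposition \ref{prop-2}), whose weighted combination $u_2^k$ satisfies $u_2^{k+1}\le\left(1-\tfrac{\gamma}{2}(1-\sigma)\right)u_2^k+\tfrac{15L_2}{4}\sqrt{\sigma^{-(m-1)}u_1^k}$; unrolling gives $u_2^k\le\phi^k\tilde{u}_2^0\le\tilde{u}_2^0$, and only then does the choice $M\ge L_2\sqrt{u_1^0/n}+\tilde{u}_2^0$ guarantee $M_1I_d\preceq H_i^k+MI_d\preceq M_2I_d$ at step $k$. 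Your proposal contains none of this machinery; tellingly, it never explains why the condition $\gamma\le\delta^2(1-\sigma)/50$ appears in \eqref{eq-a-c} at all, since $\gamma$ enters only through the compression/Hessian-tracking recursion. Without that induction, every invocation of \eqref{eq-M1M2} in your three recursions (in particular $\|\bd^k\|\le M_1^{-1}(1+c_k)\|\bg^k\|$ and the lower bound $M_2^{-1}\|\nabla F(\ox^k)\|^2$ in the descent step) rests on an unproven assumption, so the argument as written does not yet prove the theorem as stated.
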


Theorem \ref{theo-1} implies that if the parameter $M$ is sufficiently large, the step sizes $\alpha$ and $\gamma$ are sufficiently small, and the CG step is sufficiently accurate, then Algorithm \ref{alg-1} converges linearly with any number of communication rounds $m$ and any initialization $\bx^0$.
\begin{rmk}
	By substituting \eqref{eq-a-c} into \eqref{eq-rate-u}, we have that the total number of iterations for Algorithm \ref{alg-1} to get an $\epsilon$-optimal solution is $$O\left(\kappa_F \max\left\{\frac{{\sigma^{m-1}}}{(1-\sigma^2)^3},2\right\} \log \frac{1}{\epsilon}\right),$$
	where we use $\tfrac{M_2}{M_1}=O(1)$ by setting $M\gg L_1 + \tilde{u}_2^0$.
	 If we set $m\ge \frac{\log 2(1-\sigma^2)^3}{\log \sigma}+1$, then the computational complexity becomes $$O\left( \kappa_{F}\log\frac{1}{\epsilon}\right),$$ which is independent on the graph. 	 The computational complexity of stage I is still $\kappa_F$-dependent because we only use the uniform bounds on the Hessian approximations and have not yet employed the curvature information. Nevertheless, this rate is still favorable since the goal of stage I is to guarantee globally linear convergence with arbitrary initialization. We will show a faster theoretical rate for stage II.
\end{rmk}
\begin{rmk}	
	Compared with the analysis of stochastic quasi-Newton methods in  \cite{zhang2022variance}, we consider the deterministic case and need novel techniques to control the inexactness caused by the CG step. Besides, we get a better theoretical computational complexity than that of \cite{zhang2022variance} by using DAC to mix local Hessian approximations.

\end{rmk}

\subsubsection{One-step Descent in Stage I}
Given that condition \eqref{eq-M1M2} holds at a certain time step $k_0$, the following proposition establishes one-step descent from $u_1^{k_0}$ to $u_1^{k_0+1}$.
\begin{props}\label{prop-1}
	Under Assumptions  \ref{asm-W}--\ref{asp:compression}, if $c_k\le 1$ and \eqref{eq-M1M2} holds at a certain time step $k_0$, then we have
\begin{align}\label{eq-rate-1}
	\bq_1^{k_0+1}\le \bJ^{[1]} \bq_1^{k_0}
\end{align}
with
\begin{equation*}
	\hspace{-2mm}{\bJ^{[1]}} \!\triangleq\!
	\begin{bmatrix}
		1\! -\! 0.49(1\! -\!\sigma^2) &
		{ 0.004 (1 \!-\! \sigma^2)^3}&
		\frac{{ 64} {\sigma^{2m}} \alpha^2 L_1^2}{(1 - \sigma^2) M_1^2}\\
		\frac{33}{1 - \sigma^2}& 1 - 0.49(1 \!- \!\sigma^2)& \frac{{ 64}{\sigma^{2m}}\alpha^2 L_1^2}{(1 - \sigma^2) M_1^2}\\
		\frac{(1 - \sigma^2)^3}{{20\sigma^{m-1}}} & \frac{(1 - \sigma^2)^3}{{80\sigma^{m-1}}} & 1 - \frac{\mu \alpha}{M_2}
	\end{bmatrix}.
\end{equation*}
Further, if the parameters $M$, $\alpha$, $c_k$, and $\gamma$ satisfy \eqref{eq-a-c}, then  \eqref{eq-rate-1} implies that
\begin{align}\label{eq-rate-u0}
	u_1^{k_0 + 1} \leq \left(1 - \frac{\mu \alpha}{2 M_2} \right) u_1^{k_0}.
\end{align}
\end{props}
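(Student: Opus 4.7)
The plan is to derive one-step recursions for each of the three components of $\bq_1^{k_0}$ separately, assemble them into the matrix inequality \eqref{eq-rate-1}, and then verify the scalar contraction \eqref{eq-rate-u0} by multiplying on the left by the vector of weights that defines $u_1$. Only the third (descent) row uses the Hessian bounds \eqref{eq-M1M2}; the first two rely on the mixing property $\|\bW^m - \bW^\infty\| \le \sigma^m$ together with the gradient-tracking identity $\og^{k_0} = \overline{\nabla f}(\bx^{k_0})$, which follows by summing the DAC recursion across nodes from the initialization $g_i^0 = \nabla f_i(x_i^0)$.

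For the $\bx$-consensus row I would apply $I - \bW^\infty$ to $\bx^{k_0+1} = \bW^m(\bx^{k_0} - \alpha \bd^{k_0})$ and use Young's inequality with a parameter tuned to convert $\sigma^{2m}(1+\eta)$ into the target factor $1 - 0.49(1-\sigma^2)$. The cross term calls for a bound on $\|\bd^{k_0}\|^2$: Fact~\ref{fact-CG} together with \eqref{eq-M1M2} gives $\|\bd^{k_0}\| \le (1+c_{k_0-1})\|\bg^{k_0}\|/M_1$, and the split $\|\bg^{k_0}\|^2 \le 2\|\bg^{k_0} - \bW^\infty \bg^{k_0}\|^2 + 2n\|\og^{k_0}\|^2$ lets me control the mean $\og^{k_0}$ via $L_1$-smoothness and $\mu$-strong convexity in terms of $F(\ox^{k_0}) - F(x^*)$ and the $\bx$-consensus error. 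The gradient-tracker row is derived analogously from the $\bg$-update: the Jacobian increment is bounded by $L_1\|\bx^{k_0+1} - \bx^{k_0}\|$, which splits through the $\bW^m - I$ action into an $\bx$-consensus term (producing the coefficient $33/(1-\sigma^2)$) plus an $\alpha^2\|\bd^{k_0}\|^2$ term that is handled as above.

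For the descent row, note that $\bW^\infty \bW^m = \bW^\infty$ gives $\ox^{k_0+1} = \ox^{k_0} - \alpha \od^{k_0}$, so $L_1$-smoothness of $F$ yields
\[
F(\ox^{k_0+1}) \le F(\ox^{k_0}) - \alpha \langle \nabla F(\ox^{k_0}), \od^{k_0}\rangle + \tfrac{L_1\alpha^2}{2}\|\od^{k_0}\|^2.
\]
Writing $(H_i^{k_0}+MI_d)d_i^{k_0} = g_i^{k_0} + r_i^{k_0}$ from Fact~\ref{fact-CG} and decomposing $g_i^{k_0} = \nabla F(\ox^{k_0}) + (g_i^{k_0} - \og^{k_0}) + (\og^{k_0} - \nabla F(\ox^{k_0}))$, the leading inner product inherits from \eqref{eq-M1M2} the negative contribution $-\alpha M_2^{-1}\|\nabla F(\ox^{k_0})\|^2$, which $\mu$-strong convexity converts into the diagonal entry $1 - \mu\alpha/M_2$. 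AM--GM on the two remaining deviations, with $L_1$-smoothness turning $\og^{k_0} - \nabla F(\ox^{k_0}) = \overline{\nabla f}(\bx^{k_0}) - \nabla F(\ox^{k_0})$ into a multiple of the $\bx$-consensus error, yields the $(3,1)$ and $(3,2)$ off-diagonals; the CG residual, bounded only by $\|\br^{k_0}\| \le \|\bg^{k_0}\|$ at this point, is absorbed into the same buckets.

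Finally, to pass from \eqref{eq-rate-1} to \eqref{eq-rate-u0} I would left-multiply by $v = (1, \frac{(1-\sigma^2)^2}{50}, 2\sigma^{m-1})$ and verify $v^T \bJ^{[1]} \le (1 - \frac{\mu\alpha}{2M_2}) v^T$ entrywise under \eqref{eq-a-c}. The binding constraint lies in the third column: the choice $\alpha \le M_1^2(1-\sigma^2)^3/(100 L_1 M_2 \sigma^{m-1})$ is calibrated so that $2\sigma^{m-1} \cdot 64 \sigma^{2m}\alpha^2 L_1^2/((1-\sigma^2) M_1^2)$ is dominated by $\sigma^{m-1}\mu\alpha/M_2$, while $c_k \le M_1/(4M_2\sqrt{2\kappa_F})$ keeps the CG contribution within the budget. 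I expect the main obstacle to be the simultaneous tuning of Young parameters so that all six off-diagonal entries of $\bJ^{[1]}$ attain their stated constants at once rather than only pairwise, and keeping the first two rows clean when only the weak bound $c_k \le 1$ is available for \eqref{eq-rate-1}.
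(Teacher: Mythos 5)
Your proposal is correct and follows essentially the same route as the paper's proof: three separate one-step recursions for the consensus error, the gradient-tracking error, and the optimality gap (each using $\|\bW^m-\bW^\infty\|\le\sigma^m$, Young's inequality, the CG bound $\|\bd^{k_0}\|\le 2\|\bg^{k_0}\|/M_1$ from \eqref{eq-M1M2}, and the descent lemma on $F(\ox^{k_0}-\alpha\od^{k_0})$ with strong convexity giving the $1-\mu\alpha/M_2$ diagonal), assembled into $\bJ^{[1]}$ and then contracted by left-multiplying with $\left(1,\tfrac{(1-\sigma^2)^2}{50},2\sigma^{m-1}\right)$. The one device in the paper's argument that your sketch omits and that you would need to hit the stated constants in the third row: when bounding $\frac{1}{n}\sum_i B_i^{k_0}(g_i^{k_0}-\og^{k_0})$, the paper first replaces $B_i^{k_0}$ by $B_i^{k_0}-\tfrac{1}{2M_1}I_d$ (legitimate since $\sum_i(g_i^{k_0}-\og^{k_0})=0$), which halves the operator-norm factor to $\tfrac{1}{2M_1}$ and yields the coefficient $\tfrac{1}{nM_1^2}\|\bg^{k_0}-\bW^\infty\bg^{k_0}\|^2$ in \eqref{eq-d-Bg}; without this centering your raw decomposition loses a factor of four on that term.
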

\begin{proof}
	See Appendix \ref{prf-theorem1}.
\end{proof}

\subsubsection{Proof of Main Theorem for Stage I}
Thanks to Proposition \ref{prop-1}, to prove Theorem \ref{theo-1}, we only need to show that \eqref{eq-M1M2} holds for all $k\geq 0$. Observing that \eqref{eq-M1M2} gives bounds on the Hessian approximations, we need to take the specific compression procedure into consideration and give the convergence rate of the Hessian tracking error $\|\bH^{k + 1}-\bW^\infty \bH^{k + 1} \|_F$. To do this, we establish the following proposition to bound the compression error $\|\bE^{k + 1} \|_F$, the Hessian approximation difference $\|\bH^{k + 1} - \tilde{\bH}^{k + 1} \|_F$, and the Hessian tracking error $\|\bH^{k + 1} - \bW^\infty \bH^{k + 1} \|_F$. Let us define
\begin{equation*}
\bq_2^k\triangleq \begin{pmatrix}
		\|\bE^k\|_F \\
		\|\bH^k -\tilde{\bH}^k\|_F\\
		\|\bH^k - \bW^\infty \bH^k \|_F
	\end{pmatrix}
\end{equation*}
and
\begin{align*}
	u_2^k &\triangleq \left(\frac{\delta (1 - \sigma)}{8(1 - \delta)}, \frac{1 - \sigma}{4}, 1 \right) \bq_2^k.
\end{align*}
Here, $u_2^k=0$ implies that $\bE^k=0$, $\tilde{\bH}^k=\bH^k$, and $H_1^k=\cdots=H_n^k=\frac{1}{n}\sum_{i=1}^{n} \nabla f_i(x_i^k)$, where we use $\oH^{k+1}=\overline{\nabla^2 f}(\bx^{k+1})$ for all $k\ge 0$.
\begin{props}\label{prop-2}
	Under Assumptions \ref{asm-W},  \ref{asp:lipschitz-continuous},  and \ref{asp:compression}, if $\gamma \leq 1$, then for all $k\ge 0$, we have
	\begin{align}\label{eq-compress-linear}
		\bq_2^{k+1}\le \bJ^{[2]} \bq_2^k+L_2\|\bx^{k + 1}-\bx^k\|\begin{bmatrix}
			0\\ 1\\ 1
		\end{bmatrix}
	\end{align}
with
	\begin{equation*}
		\bJ^{[2]}\triangleq
		\begin{bmatrix}
			1 - \delta & 1 - \delta &  0&\\
			4 \gamma & \left(1 - \delta + 2 \gamma (1 - \delta) \right) & 2\gamma \\
			4 \gamma & 2\gamma (1 - \delta) & 1 - \gamma (1 - \sigma)
		\end{bmatrix}.
	\end{equation*}
Further, under Assumption \ref{ass-strongly-cvx}, if the parameters $M$, $\alpha$, $c_k$, and $\gamma$ satisfy \eqref{eq-a-c} and \eqref{eq-M1M2} holds at a certain time step $k_0$, then \eqref{eq-compress-linear} implies that
\begin{align}\label{eq-linear-v}
	u_2^{k_0 + 1} \leq \left(1 - \frac{\gamma}{2} (1 - \sigma) \right) u_2^{k_0} + \frac{15L_2}{4}  \sqrt{\sigma^{-(m-1)}u_1^{k_0}}.
\end{align}	
\end{props}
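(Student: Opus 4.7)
The plan is to prove the matrix inequality \eqref{eq-compress-linear} component by component, and then to combine the three components via the positive left vector $v = \bigl(\tfrac{\delta(1-\sigma)}{8(1-\delta)},\tfrac{1-\sigma}{4},1\bigr)$ to extract the scalar contraction \eqref{eq-linear-v}. The first row of $\bJ^{[2]}$ is the easiest: apply Assumption~\ref{asp:compression} together with the triangle inequality to the identity $\bE^{k+1}=\bE^k+\bH^k-\tilde{\bH}^k-\mathcal{Q}(\bE^k+\bH^k-\tilde{\bH}^k)$.

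The second and third rows share the key algebraic identity $\hat{\bH}^k=\bH^k+\bE^k-\bE^{k+1}$, obtained by eliminating $\mathcal{Q}(\bE^k+\bH^k-\tilde{\bH}^k)$ between the definitions of $\bE^{k+1}$ and $\hat{\bH}^k$. For the second row I would subtract the updates to write
\[
\bH^{k+1}-\tilde{\bH}^{k+1}=\bigl[\bH^k-\tilde{\bH}^k-\mathcal{Q}(\bH^k-\tilde{\bH}^k)\bigr]-\gamma(I_{nd}-\bW)\hat{\bH}^k+\nabla^2 f(\bx^{k+1})-\nabla^2 f(\bx^k),
\]
bound the first bracket by $(1-\delta)\|\bH^k-\tilde{\bH}^k\|_F$, the last term by the Hessian Lipschitz assumption, and the middle piece using the identity above together with $(I_{nd}-\bW)\bH^k=(I_{nd}-\bW)(\bH^k-\bW^\infty\bH^k)$, $\|I_{nd}-\bW\|\le 2$, and the already derived bound on $\|\bE^{k+1}\|_F$. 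For the third row, applying $I_{nd}-\bW^\infty$ to the $\bH$-update and using $(I_{nd}-\bW^\infty)(I_{nd}-\bW)=I_{nd}-\bW$ yields a decomposition into $(I_{nd}-\gamma(I_{nd}-\bW))(\bH^k-\bW^\infty\bH^k)$, a compression-error piece $\gamma(I_{nd}-\bW)(\bE^k-\bE^{k+1})$, and a Hessian Lipschitz piece. Because $\bW$ is symmetric with spectrum in $(-1,1]$, on $\mathrm{range}(I_{nd}-\bW^\infty)$ the operator $I_{nd}-\gamma(I_{nd}-\bW)$ has norm $1-\gamma(1-\sigma)$ whenever $\gamma\le 1$, and the same compression bookkeeping as before controls the other piece.

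Left-multiplying \eqref{eq-compress-linear} by $v$ converts it into the scalar recursion $u_2^{k_0+1}\le (v\bJ^{[2]})\bq_2^{k_0}+(v_2+v_3)L_2\|\bx^{k_0+1}-\bx^{k_0}\|$. The main obstacle is verifying the left-eigenvector-type inequality $v\bJ^{[2]}\le\bigl(1-\tfrac{\gamma(1-\sigma)}{2}\bigr)v$ coordinate by coordinate; this is precisely where the parameter restriction $\gamma\le\tfrac{\delta^2(1-\sigma)}{50}$ is used, as it is just strong enough to absorb the off-diagonal cross-couplings between compression, consensus, and Hessian tracking and to justify the specific weights $\tfrac{\delta(1-\sigma)}{8(1-\delta)}$ and $\tfrac{1-\sigma}{4}$.

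Finally, since $v_2+v_3\le\tfrac{5}{4}$, it suffices to upgrade $\|\bx^{k_0+1}-\bx^{k_0}\|$ to $3\sqrt{\sigma^{-(m-1)}u_1^{k_0}}$ to obtain the stated residual $\tfrac{15L_2}{4}\sqrt{\sigma^{-(m-1)}u_1^{k_0}}$. The decomposition
\[
\bx^{k_0+1}-\bx^{k_0}=(\bW^m-I_{nd})(\bx^{k_0}-\bW^\infty\bx^{k_0})-\alpha\bW^m\bd^{k_0}
\]
together with $\|\bW^m-I_{nd}\|\le 2$ controls the consensus piece via the first entry of $\bq_1^{k_0}$, while the direction piece is handled by Fact~\ref{fact-CG} under \eqref{eq-M1M2} (giving $\|\bd^{k_0}\|\le 2\|\bg^{k_0}\|/M_1$), the DAC invariant $\og^{k_0}=\overline{\nabla f}(\bx^{k_0})$, the split $\|\bg^{k_0}\|\le\|\bg^{k_0}-\bW^\infty\bg^{k_0}\|+\sqrt{n}\,\|\overline{\nabla f}(\bx^{k_0})\|$, and the smoothness bound $\|\nabla F(\ox^{k_0})\|^2\le 2L_1(F(\ox^{k_0})-F(x^*))$. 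The three resulting pieces are controlled by the three entries of $\bq_1^{k_0}$, and the $\sigma^{-(m-1)}$ factor emerges from the $2\sigma^{m-1}$ weight on $F(\ox^{k_0})-F(x^*)$ inside $u_1^{k_0}$ combined with the step-size restriction $\alpha\le\tfrac{M_1^2(1-\sigma^2)^3}{100L_1M_2\sigma^{m-1}}$ from \eqref{eq-a-c}.
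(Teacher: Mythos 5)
Your proposal is correct and follows essentially the same route as the paper: the same three component-wise bounds (using the identity $\hat{\bH}^k=\bH^k+\bE^k-\bE^{k+1}$, the factorization $(I_{nd}-\bW^\infty)(I_{nd}-\bW)=I_{nd}-\bW$, and the spectral bound $1-\gamma(1-\sigma)$), the same left-eigenvector-type inequality under $\gamma\le\frac{\delta^2(1-\sigma)}{50}$, and the same bound $\|\bx^{k_0+1}-\bx^{k_0}\|\le 3\sqrt{\sigma^{-(m-1)}u_1^{k_0}}$ derived from \eqref{eq-a-c} and the entries of $\bq_1^{k_0}$. No gaps.
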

\begin{proof}
	See Appendix \ref{prf-theom-2}.
\end{proof}

Observing that Propositions \ref{prop-1} and \ref{prop-2} hold for a certain $k_0$, we show that \eqref{eq-M1M2} holds for all $k\geq 0$ via mathematical induction.
\begin{props}\label{prop-3}
Under the setting of Theorem \ref{theo-1}, considering the sequence $\{H_i^k\}_{k\ge 0}$ for any $i\in \{1,\ldots,n\}$ generated by Algorithm \ref{alg-1}, we have that condition \eqref{eq-M1M2} holds for all $k\geq 0$.
\end{props}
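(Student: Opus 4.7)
The plan is a strong-induction argument on $k$: assume \eqref{eq-M1M2} holds at every index $j\le k_0$, and propagate it to $k_0+1$. The central tool is the three-term decomposition
\begin{align*}
H_i^{k}-\nabla^2 F(\ox^{k})=\bigl(H_i^{k}-\oH^{k}\bigr)+\bigl(\oH^{k}-\overline{\nabla^2 f}(\bx^{k})\bigr)+\bigl(\overline{\nabla^2 f}(\bx^{k})-\nabla^2 F(\ox^{k})\bigr).
\end{align*}
The middle term vanishes via the invariant $\oH^{k}=\overline{\nabla^2 f}(\bx^{k})$: at $k=0$ it holds by the initialization $H_i^0=\nabla^2 f_i(x_i^0)$, and multiplying the Hessian update in Algorithm \ref{alg-1} by $\bW^\infty$ annihilates the consensus correction because $\bW^\infty(I_{nd}-\bW)=0$, leaving $\oH^{k+1}=\oH^{k}+\overline{\nabla^2 f}(\bx^{k+1})-\overline{\nabla^2 f}(\bx^{k})$. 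The Lipschitz term is bounded by Assumption \ref{asp:lipschitz-continuous} and Cauchy--Schwarz as $L_2\sqrt{\|\bx^{k}-\bW^\infty\bx^{k}\|^2/n}\le L_2\sqrt{u_1^{k}/n}$, and the first term is dominated by $\|\bH^{k}-\bW^\infty\bH^{k}\|_F\le u_2^{k}$ (the third coordinate of $\bq_2^{k}$).

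Next I would use Propositions \ref{prop-1} and \ref{prop-2} to control $u_1^{k_0+1}$ and $u_2^{k_0+1}$. Under the inductive hypothesis, Proposition \ref{prop-1} applies at each $j\le k_0$ and gives $u_1^{j+1}\le\bigl(1-\tfrac{\mu\alpha}{2M_2}\bigr)u_1^{j}$, so in particular $u_1^{k_0+1}\le u_1^0$. Feeding this monotone geometric decay into Proposition \ref{prop-2} gives
\begin{align*}
u_2^{k+1}\le \Bigl(1-\tfrac{\gamma}{2}(1-\sigma)\Bigr)u_2^{k}+\tfrac{15L_2}{4}\sqrt{\sigma^{-(m-1)}u_1^{k}},
\end{align*}
and unrolling this as a convolution of two contractions with rates $\rho_2=1-\tfrac{\gamma}{2}(1-\sigma)$ and $\rho_1=\bigl(1-\tfrac{\mu\alpha}{2M_2}\bigr)^{1/2}$ produces a uniform-in-$k$ geometric-sum bound that I would take as the definition of $\tilde{u}_2^0$ in \eqref{eq-tildeu20}. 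Combining the pieces,
\begin{align*}
\|H_i^{k_0+1}-\nabla^2 F(\ox^{k_0+1})\|\le L_2\sqrt{u_1^0/n}+\tilde{u}_2^0,
\end{align*}
so using Assumption \ref{ass-strongly-cvx} together with $\nabla^2 F(\ox^{k_0+1})\preceq L_1 I_d$ (from $\|\nabla^2 f_i\|\le L_1$) yields $\mu-L_2\sqrt{u_1^0/n}-\tilde{u}_2^0\le \lambda_{\min}(H_i^{k_0+1})$ and $\lambda_{\max}(H_i^{k_0+1})\le L_1+L_2\sqrt{u_1^0/n}+\tilde{u}_2^0$, which is exactly \eqref{eq-M1M2} at $k_0+1$.

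The base case $k=0$ follows from the same decomposition together with the prescribed initial values of $\bE^0,\tilde{\bH}^0,\bH^0$, chosen so that $u_2^0\le \tilde{u}_2^0$. The main obstacle is the \emph{self-consistency} of the choice of $\tilde{u}_2^0$: it must be large enough to majorize the unrolled convolution sum uniformly in $k$, yet small enough that the parameter condition $M\ge L_2\sqrt{u_1^0/n}+\tilde{u}_2^0$ from \eqref{eq-a-c} still yields $M_1>0$. Verifying that $\rho_1,\rho_2\in(0,1)$ under \eqref{eq-a-c}, and that the resulting geometric-sum bound on $u_2^{k}$ is finite and independent of $k$, is the core bookkeeping that closes the induction and delivers \eqref{eq-M1M2} at every $k\ge 0$.
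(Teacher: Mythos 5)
Your proposal is correct and follows essentially the same route as the paper's proof: induction using Proposition \ref{prop-1} to get geometric decay of $u_1^k$, feeding that into the $u_2$-recursion of Proposition \ref{prop-2} and unrolling it to a uniform bound $\tilde{u}_2^0$ (the paper's constant $C$ and $\phi$), and then bounding $\|H_i^k-\nabla^2 F(\ox^k)\|$ through $\oH^k=\overline{\nabla^2 f}(\bx^k)$ exactly as you describe. The ``self-consistency'' issue you flag is resolved in the paper by defining $\tilde{u}_2^0$ in \eqref{eq-tildeu20} from the initial data and then imposing $M\ge L_2\sqrt{u_1^0/n}+\tilde{u}_2^0$ as a hypothesis in \eqref{eq-a-c}, which is precisely the bookkeeping you anticipated.
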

\begin{proof}
See Appendix \ref{sec-prof-assump-M}.
\end{proof}

Thus, combining  Propositions \ref{prop-1} and \ref{prop-3} directly gives \eqref{eq-rate-u}. This completes the proof of Theorem \ref{theo-1}.

\subsection{Stage II: Faster Local Convergence}
After stage I, all the local copies $x_1^k, \ldots, x_n^k$ are close enough to $x^*$ according to Theorem \ref{theo-1} and all the local Hessian approximations $H_1^k, \ldots, H_n^k$ are almost consensual according to Proposition \ref{prop-2}, as long as $k$ is sufficiently large. We will specify the number of iterations needed by stage I later (see \eqref{eq-k-stageI}). In stage I, we only use the uniform boundedness of the Hessian approximations and do not take advantage of the curvature information adequately. However, in stage II, to get a locally faster rate,  we need to bound the error between each local Hessian approximation $H_i^k$ and the global Hessian $\nabla^2 F(\ox^k)$ (see \eqref{eq-44}). After this, we further bound the error between local directions $d_i^k$ and the global Newton direction $(\nabla^2 F(\ox^{k} ))^{-1} \nabla F(\ox^{k})$ (see \eqref{eq-H-nabla2-1}). In this way, we can utilize the locally quadratic convergence rate of the centralized Newton's method to bound $\|\ox^{k} - x^*\|$ (see Corollary \ref{lem-xopt}). The analysis is novel compared with those of existing first-order and second-order methods and is vital to relate the decentralized Newton's method with the centralized one.

Let the proposed algorithm enter stage II after $K$ iterations with
\begin{align}\label{eq-k-stageI}
\hspace{-1em}	K \!\geq\! \frac{\frac{m}{2}\log \sigma - \log \frac{41\kappa_F}{\mu\sqrt{n}}  \left(\tilde{u}_2^0  + \frac{52L_2\kappa_F  \sqrt{\kappa_F}\sigma^{-\frac{5m}{4}}}{(1-\sigma^{m/2})(1 - \sigma^2)} \sqrt{u_1^0}\right)}{\log \phi}.
\end{align}
Let $M_1 \triangleq \frac{40\mu}{41}$ and $M_2 \triangleq L_1 + \frac{\mu}{41}$ in stage II, which are different from those in stage I but we use the same notation for simplicity. We establish the faster local rate in stage II under the condition
\begin{align}\label{ineq-M1M2}
	M_1 I_d \preceq H_i^{k + 1} \preceq M_2 I_d, \quad \forall\, i \in \{1,\ldots,n\}
\end{align}
for all $k\ge K$. Proposition \ref{prop-5} below shows that the sequence $\{H_i^k\}_{k\ge 0}$ for any $i\in \{1,\ldots,n\}$ generated by Algorithm \ref{alg-1} satisfies \eqref{ineq-M1M2} .

\subsubsection{Main Theorem for Stage II}
Define $\bq_3^k$ and $\bJ^{[3]}$ as in \eqref{eq-J3} and  $$
u_3^k\triangleq \left(1, \sigma^{-\frac{m}{4}}, 0.5 \sigma^{-\frac{3m}{4}}\right)\bq_3^k.
$$
We establish a faster local rate for stage II.
\begin{thm}\label{theo-3}
Under Assumptions \ref{asm-W}--\ref{asp:compression},
if the parameters
satisfy
\begin{align}\label{ineq-cond}
\hspace{-3mm}\alpha \!= \!1,~ M=0, ~ m > \frac{ 4 \log (4\kappa_F)}{ -\log \sigma}, c_k\le \frac{M_1\sigma^{m/2}}{40\mu \kappa_F}, \gamma \leq 1,
\end{align}
then for all $k\geq K$, we have
\begin{align}
u_3^{k+1} \leq \sigma^{\frac{m}{2}}u_3^k.
\end{align}
\end{thm}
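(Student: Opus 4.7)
The plan is to mirror the two-step structure used for Stage~I: first establish a one-step linear system inequality $\bq_3^{k+1}\le \bJ^{[3]}\bq_3^k$ under the inductive hypothesis \eqref{ineq-M1M2}, then verify that the affine functional $u_3^k=(1,\sigma^{-m/4},0.5\sigma^{-3m/4})\bq_3^k$ is contracted by a factor $\sigma^{m/2}$, and finally close the induction by showing \eqref{ineq-M1M2} is propagated for all $k\ge K$. Because the components of $\bq_3^k$ should (based on the narrative before the theorem) track the consensus error $\|\bx^k-\bW^\infty\bx^k\|$, the gradient-tracking error $\|\bg^k-\bW^\infty\bg^k\|$, and the optimization error $\|\ox^k-x^*\|$, the first two rows of $\bJ^{[3]}$ will come from the multi-step consensus contraction $\sigma^m$ applied to \eqref{eq-alg-x}--\eqref{eq-alg-g}, exactly as in Proposition~\ref{prop-1}, but with all step-size-dependent slack terms replaced by their $\alpha=1$ values. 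The new and essential third row is where the Newton step actually appears.

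For the decrease of $\|\ox^{k+1}-x^*\|$, the key step is to write
\[
\ox^{k+1}-x^* = \ox^k - \alpha\od^k - x^* = \bigl(\ox^k - (\nabla^2 F(\ox^k))^{-1}\nabla F(\ox^k) - x^*\bigr) + \bigl((\nabla^2 F(\ox^k))^{-1}\nabla F(\ox^k)-\od^k\bigr),
\]
so that the first bracket gives the usual centralized Newton quadratic-convergence term bounded by $\frac{L_2}{2\mu}\|\ox^k-x^*\|^2$ (via Assumptions~\ref{asp:lipschitz-continuous}--\ref{ass-strongly-cvx}), while the second bracket is the decentralization error. To control the second bracket I would use the inequalities promised in the narrative, namely \eqref{eq-44} bounding $\|H_i^k-\nabla^2 F(\ox^k)\|$ and \eqref{eq-H-nabla2-1} bounding $\|d_i^k-(\nabla^2 F(\ox^k))^{-1}\nabla F(\ox^k)\|$, together with the CG residual bound $c_k\le M_1\sigma^{m/2}/(40\mu\kappa_F)$, the consensus of $H_i^k$'s guaranteed by Proposition~\ref{prop-2}, and the gradient-tracking identity $\og^k=\overline{\nabla f}(\bx^k)$. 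These give bounds on $\|\od^k-(\nabla^2 F(\ox^k))^{-1}\nabla F(\ox^k)\|$ in terms of $\|\bx^k-\bW^\infty\bx^k\|$, $\|\bg^k-\bW^\infty\bg^k\|$, and a tiny residual of order $\sigma^{m/2}$ times $\|\ox^k-x^*\|$, producing the third row of $\bJ^{[3]}$.

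Once $\bq_3^{k+1}\le\bJ^{[3]}\bq_3^k$ is in place, the contraction $u_3^{k+1}\le \sigma^{m/2} u_3^k$ reduces to verifying the three scalar inequalities obtained from multiplying $\bJ^{[3]}$ on the left by $(1,\sigma^{-m/4},0.5\sigma^{-3m/4})$ and checking each entry is bounded by $\sigma^{m/2}$ times the corresponding entry of $(1,\sigma^{-m/4},0.5\sigma^{-3m/4})$. The scaling exponents $-m/4$ and $-3m/4$ are exactly chosen so that the off-diagonal coupling terms---which scale like $\sigma^m$ from the multi-step mixing and $\sigma^{m/2}$ from the Hessian-approximation residual---are absorbed; the condition $m>4\log(4\kappa_F)/(-\log\sigma)$ is what is needed to make the $\kappa_F$-dependent prefactors appearing in the Newton step bound smaller than $\sigma^{m/2}$.

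Finally, to close the induction I would verify \eqref{ineq-M1M2}: the lower bound on $K$ in \eqref{eq-k-stageI} makes the Stage~I initial quantities small enough that, by Proposition~\ref{prop-2}, the distance between every $H_i^k$ and $\oH^k=\overline{\nabla^2 f}(\bx^k)$, combined with the distance from $\overline{\nabla^2 f}(\bx^k)$ to $\nabla^2 F(\ox^k)$ (bounded using Lipschitz Hessians and the consensus error), drops below $\mu/41$; together with $\mu I_d\preceq \nabla^2 F(\ox^k)\preceq L_1 I_d$ this yields \eqref{ineq-M1M2} at time $K$. A short induction along the recursion for $\bH^k$ in \eqref{eq-alg-H}, combined with the monotone decrease $u_3^{k+1}\le\sigma^{m/2}u_3^k$ we just established, then propagates \eqref{ineq-M1M2} to every $k\ge K$. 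The main obstacle I expect is the tight bookkeeping in the third row of $\bJ^{[3]}$: relating $\|\od^k-(\nabla^2F(\ox^k))^{-1}\nabla F(\ox^k)\|$ cleanly to the three components of $\bq_3^k$ with constants that actually fit the $\sigma^{m/2}$ contraction, since any $\kappa_F$-factor left in the wrong place would invalidate the $\kappa_F$-independent local rate that the theorem claims.
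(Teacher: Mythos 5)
Your plan follows essentially the same route as the paper: the same decomposition $\ox^{k+1}-x^*=\bigl(\ox^k-x^*-(\nabla^2F(\ox^k))^{-1}\nabla F(\ox^k)\bigr)+\bigl((\nabla^2F(\ox^k))^{-1}\nabla F(\ox^k)-\od^k\bigr)$ for the third row, the same weighted-vector contraction of $\bq_3^{k+1}\le\bJ^{[3]}\bq_3^k$ with weights $(1,\sigma^{-m/4},0.5\sigma^{-3m/4})$, and the same induction propagating the Hessian bounds from time $K$ onward. The only detail you leave implicit is that propagating \eqref{ineq-M1M2} requires the combined Lyapunov quantity $A_k=u_2^k+\tfrac{5L_1L_2}{M_1(1-\sigma^{m/2})}u_3^k$, since $u_2^k$ alone is not monotone in stage II; the paper handles this in Proposition \ref{prop-5}.
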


Theorem \ref{theo-3} implies that we get a $\kappa_F$-independent linear rate when $m$ is a constant and an asymptotically super-linear rate when $m$ goes to infinity.
\begin{rmk}
	In stage II, we artificially set a unit step size $\alpha$ to establish a faster local rate. This is done by taking advantage of the curvature information. Note that existing techniques for adaptively choosing step sizes, such as backtracking line search, require evaluations of the entire objective function for many times per iteration. In the numerical experiments, we show that geometrically increasing the step size to unit works well.
\end{rmk}
\subsubsection{One-step Descent in Stage II}
To prove Theorem \ref{theo-3}, given  that condition \eqref{ineq-M1M2} holds for a certain time step $\tilde{k}_0$ with $\tilde{k}_0\ge K$, we establish one-step descent from $u_2^{\tilde{k}_0}$ to $u_2^{\tilde{k}_0+1}$.
\begin{props}\label{prop-4}
	Under Assumptions \ref{asm-W}--\ref{ass-strongly-cvx}, if $M=0$ and \eqref{ineq-M1M2} holds for a certain $\tilde{k}_0$ with $\tilde{k}_0\ge K$, then we have
	\begin{align}\label{eq-a38}
		\bq_3^{\tilde{k}_0 + 1} \leq \bJ^{[3]}
		\bq_3^{\tilde{k}_0}.
	\end{align}
Further, if
\begin{align}\label{ineq-eps-delta}
	\kappa_F \epsilon^{\tilde{k}_0} + \delta^{\tilde{k}_0} \leq \frac{1}{{20}}\sigma^{\frac{m}{2}}
\end{align}
and the parameters $\alpha$, $m$, $c_k$, and $\gamma$ satisfy \eqref{ineq-cond}, then \eqref{eq-a38} implies that
\begin{align}\label{eq-a39}
	u_3^{\tilde{k}_0+1} \leq \sigma^{\frac{m}{2}}u_3^{\tilde{k}_0}.
\end{align}
\end{props}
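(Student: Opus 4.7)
The plan is to prove the componentwise inequality $\bq_3^{\tilde{k}_0+1} \le \bJ^{[3]} \bq_3^{\tilde{k}_0}$ by tracking three coupled quantities that $\bq_3^k$ is designed to collect: an optimality gap $\|\ox^k - x^*\|$, a primal consensus error $\|\bx^k - \bW^\infty \bx^k\|$, and a Hessian-mismatch quantity of the form $\|\bH^k - \bW^\infty \bH^k\|_F$ (together with the compression bookkeeping terms already controlled by Proposition \ref{prop-2}). For each of these I would derive a one-step recursion, using the $\bW^m$-contraction for consensus errors, the DAC-with-compression recursion for the Hessian iterates, and the Newton-like update for the primal mean. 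The three rows of $\bJ^{[3]}$ should read off directly from these recursions.

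The main work is the first row. With $\alpha=1$ and $M=0$, the primal update averages to $\ox^{k+1} = \ox^k - \od^k$, so I would compare $\od^k$ to the centralized Newton direction $(\nabla^2 F(\ox^k))^{-1}\nabla F(\ox^k)$. Writing $(\diag\{H_i^{k+1}\})\bd^{k+1} = \bg^{k+1} + \br^{k+1}$ from Fact \ref{fact-CG} and averaging, the error between $\od^{k+1}$ and the true Newton step splits into: (a) the CG inexactness controlled by $c_k\|\bg^{k+1}\|$; (b) the Hessian mismatch $H_i^{k+1}-\nabla^2 F(\ox^{k+1})$, controlled via the bound on $\|\bH^{k+1}-\bW^\infty \bH^{k+1}\|_F$ together with the Hessian Lipschitz constant $L_2$ and the consensus error $\|\bx^{k+1}-\bW^\infty \bx^{k+1}\|$; and (c) the gradient tracking error $\og^{k+1}-\overline{\nabla f}(\bx^{k+1})$, which under DAC vanishes (so only $\|\overline{\nabla f}(\bx^{k+1})-\nabla F(\ox^{k+1})\|$ remains, again a consensus term). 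Substituting these into the standard one-step Newton estimate $\ox^{k+1}-x^* = (I - (\nabla^2 F(\ox^k))^{-1} H^*)(\ox^k - x^*) + \tfrac{L_2}{2\mu}\|\ox^k-x^*\|^2 + \mathrm{errors}$, and invoking \eqref{ineq-M1M2} to control the Newton-step factor by an arbitrarily small quantity (because $M_1$ and $M_2$ bracket the true Hessian within $1/41$ of its bounds), delivers the first row of $\bJ^{[3]}$. The second and third rows follow from plugging the update rules into the $\bW^m$-consensus recursion and invoking Proposition \ref{prop-2} at $\tilde{k}_0$.

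Given the matrix inequality \eqref{eq-a38}, to establish \eqref{eq-a39} I would left-multiply by the weight vector $(1,\sigma^{-m/4}, 0.5\sigma^{-3m/4})$ and verify row by row that each resulting coefficient is at most $\sigma^{m/2}$ times the weight on the corresponding column. The choices in \eqref{ineq-cond} are engineered exactly for this: the bound $m > 4\log(4\kappa_F)/(-\log\sigma)$ makes $\kappa_F\sigma^{m/4} \le \tfrac{1}{4}\sigma^{m/2}$, which controls the consensus-to-optimality coupling; the CG threshold $c_k \le M_1\sigma^{m/2}/(40\mu\kappa_F)$ absorbs the inexactness contribution into the first-row budget; and the additional local smallness assumption \eqref{ineq-eps-delta} is what allows the quadratic Newton term $\tfrac{L_2}{2\mu}\|\ox^k-x^*\|^2$ to be folded into the linear rate $\sigma^{m/2}\|\ox^k-x^*\|$.

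The hard part will be step (b) in the Newton-direction comparison. Because $H_i^{k+1}$ is updated through compressed DAC with a lagging auxiliary $\tilde{\bH}^k$ and error feedback $\bE^k$, and because the weight on the Hessian-mismatch component of $u_3$ is $0.5\sigma^{-3m/4}$, the perturbation of the Newton direction induced by this mismatch must be tight enough that, even after amplification by $\sigma^{-3m/4}$, the resulting contribution to $u_3^{\tilde{k}_0+1}$ remains below $\sigma^{m/2} u_3^{\tilde{k}_0}$. This forces careful juggling of the Lipschitz constants $L_1,L_2$, the regularization gap $M_2-M_1$, the compression parameter $\delta$, and the lower bound $K$ on when stage II begins (so that the initial Hessian tracking error $\tilde{u}_2^0$ fits under the same budget). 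The definition of $K$ in \eqref{eq-k-stageI} is chosen precisely so that condition \eqref{ineq-eps-delta} holds at $\tilde{k}_0 \ge K$, and verifying this consistency is where the bookkeeping is heaviest.
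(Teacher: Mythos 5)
Your high-level strategy --- compare $\od^{\tilde k_0}$ with the centralized Newton direction, split the error into CG inexactness, Hessian mismatch, and consensus terms, and then verify the contraction by left-multiplying \eqref{eq-a38} with the weight vector $(1,\sigma^{-m/4},0.5\sigma^{-3m/4})$ --- is the same as the paper's, and your account of how \eqref{ineq-cond} and \eqref{ineq-eps-delta} absorb the perturbations matches the paper's splitting of $\bJ^{[3]}$ into a dominant part and a part weighted by $\kappa_F\epsilon^{\tilde k_0}+\delta^{\tilde k_0}$. However, there is a genuine gap: you have misidentified the components of $\bq_3^k$. By \eqref{eq-J3} the three tracked quantities are the consensus error $\|\bx^k-\bW^\infty\bx^k\|$, the \emph{gradient} tracking error $\frac{1}{L_1}\|\bg^k-\bW^\infty\bg^k\|$, and $\sqrt n\,\|\ox^k-x^*\|$; the Hessian tracking error $\|\bH^k-\bW^\infty\bH^k\|_F$ is \emph{not} a component of $\bq_3^k$ --- it is frozen into the scalar coefficient $\epsilon^{\tilde k_0}$ appearing in $\bJ^{[3]}$, and its control is deferred to Proposition \ref{prop-5} via the hypothesis \eqref{ineq-eps-delta}. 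Your proposed three-quantity recursion therefore bounds a different vector, and moreover would not close: the Hessian tracking error does not obey a one-step recursion in terms of itself, the consensus error, and the optimality gap alone, but is coupled to $\|\bE^k\|_F$ and $\|\bH^k-\tilde{\bH}^k\|_F$ through the separate system of Proposition \ref{prop-2}.

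The omission of the gradient tracking error as a tracked component is not cosmetic. It enters the bound on $\|\od^{\tilde k_0}-(\nabla^2F(\ox^{\tilde k_0}))^{-1}\nabla F(\ox^{\tilde k_0})\|$ (the paper's \eqref{eq-Newtondir}) and on $\|\bd^{\tilde k_0}-\bW^\infty\bd^{\tilde k_0}\|$, so it feeds every row of $\bJ^{[3]}$ and must itself be propagated; the paper's Step II does this by deriving a tighter bound on $\|\bx^{\tilde k_0+1}-\bx^{\tilde k_0}\|$ from the Newton-direction comparison and substituting it into \eqref{ineq:gkplus1}. Your remark that the gradient tracking error ``under DAC vanishes'' applies only to the average, $\og^k=\overline{\nabla f}(\bx^k)$, not to the deviation $\bg^k-\bW^\infty\bg^k$, which is exactly the second component you need. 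Finally, verifying that \eqref{ineq-eps-delta} actually holds for $\tilde k_0\ge K$ (your last paragraph) lies outside the scope of this proposition --- that is the content of Proposition \ref{prop-5}; here \eqref{ineq-eps-delta} is simply assumed.
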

\begin{proof}
	See Appendix \ref{prf-theom-3}.
\end{proof}
\subsubsection{Proof of Theorem \ref{theo-3}}
According to Proposition \ref{prop-4}, to prove Theorem \ref{theo-3}, we only need to show \eqref{ineq-M1M2} and \eqref{ineq-eps-delta} hold for all $k\geq K$. This is done in  Proposition \ref{prop-5}.
\begin{props}\label{prop-5}
	Under the setting of Theorem \ref{theo-3},
	\eqref{ineq-M1M2} and \eqref{ineq-eps-delta} hold for any $k \geq K$.
\end{props}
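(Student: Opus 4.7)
The plan is to proceed by induction on $k\ge K$, simultaneously maintaining both the Hessian sandwich bound \eqref{ineq-M1M2} and the smallness condition \eqref{ineq-eps-delta}. The engine of the induction will be Proposition~\ref{prop-4}: once both conditions are verified at an index $\tilde k_0\ge K$, that proposition yields $u_3^{\tilde k_0+1}\le \sigma^{m/2}u_3^{\tilde k_0}$, and I intend to use this strict contraction (together with $\sigma^{m/2}<1$) to propagate both conditions to $\tilde k_0+1$. The base case at $k=K$ is where the lower bound \eqref{eq-k-stageI} on the number of stage-I iterations must be consumed.

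For the base case, I would reuse the stage-I machinery: Theorem~\ref{theo-1} gives $u_1^K\le (1-\tfrac{\mu\alpha}{2M_2})^K u_1^0$, which via the definition of $u_1^k$ and strong convexity bounds $\|\ox^K-x^*\|$ and the consensus residual $\|\bx^K-\bW^\infty\bx^K\|$. Proposition~\ref{prop-2} in turn yields a bound on $u_2^K$, i.e.\ on the compression error $\|\bE^K\|_F$, the Hessian mismatch $\|\bH^K-\tilde\bH^K\|_F$, and the Hessian tracking error $\|\bH^K-\bW^\infty\bH^K\|_F$, all in terms of $u_1^0$ via a geometric-series summation that produces the $\tfrac{1}{1-\sigma^{m/2}}$ and $\sigma^{-5m/4}$ factors visible in \eqref{eq-k-stageI}. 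The logarithmic bound on $K$ in \eqref{eq-k-stageI} is chosen exactly so that $\tilde u_2^0 + (\text{constant})\sqrt{u_1^0}$, after contraction by $\phi^K$, falls below $\mu/41$ (for the Hessian sandwich) and below $\tfrac{1}{20}\sigma^{m/2}/\kappa_F$ (for \eqref{ineq-eps-delta}). Concretely, to verify \eqref{ineq-M1M2} at $k=K$, I would decompose
\[
H_i^K-\nabla^2 F(\ox^K)=(H_i^K-\oH^K)+(\oH^K-\overline{\nabla^2 f}(\bx^K))+(\overline{\nabla^2 f}(\bx^K)-\nabla^2 F(\ox^K)),
\]
using $\oH^K=\overline{\nabla^2 f}(\bx^K)$ (from the DAC initialization, to be invoked as a lemma), the tracking bound from $u_2^K$, and Hessian Lipschitz continuity on the last term via $\|\bx^K-\bW^\infty\bx^K\|$. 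Combining these with $\mu I_d\preceq \nabla^2 F(\ox^K)\preceq L_1 I_d$ yields the window $[M_1,M_2]=[\tfrac{40\mu}{41},L_1+\tfrac{\mu}{41}]$.

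For the inductive step, assume \eqref{ineq-M1M2} and \eqref{ineq-eps-delta} at some $\tilde k_0\ge K$. Proposition~\ref{prop-4} applies and gives $u_3^{\tilde k_0+1}\le \sigma^{m/2} u_3^{\tilde k_0}$. Since $\epsilon^{\tilde k_0+1}$ and $\delta^{\tilde k_0+1}$ are dominated by appropriate components of $\bq_3^{\tilde k_0+1}$ (hence by $u_3^{\tilde k_0+1}$ up to the coefficients in its definition), this contraction immediately preserves \eqref{ineq-eps-delta}. For \eqref{ineq-M1M2} at $\tilde k_0+1$, I would repeat the three-term decomposition of $H_i^{\tilde k_0+1}-\nabla^2 F(\ox^{\tilde k_0+1})$; the Hessian tracking error is controlled by the contracted $u_3^{\tilde k_0+1}$ and Hessian Lipschitzness converts the shrinking consensus gap into a shrinking bias, so the sandwich window is maintained.

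The main obstacle I anticipate is the bookkeeping in the base case: one must chase through the explicit constants in $u_2^K$ and in the Lipschitz-Hessian terms, accumulating the $\sigma$-factors produced by the $\bW^m$ mixing and the compression ratio $1-\delta$ to verify that the logarithmic threshold in \eqref{eq-k-stageI} is indeed the right one for \emph{both} \eqref{ineq-M1M2} and \eqref{ineq-eps-delta} simultaneously. A secondary subtlety is that the constants $M_1,M_2$ in stage II are sharper than in stage I, so the admissible deviation of $H_i^k$ from $\nabla^2 F(\ox^k)$ is only $\mu/41$; I would want to verify that the choice of $c_k$ in \eqref{ineq-cond}, together with the bound $m>4\log(4\kappa_F)/(-\log\sigma)$, leaves enough slack to absorb the CG inexactness in this tight window.
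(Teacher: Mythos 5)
Your overall skeleton (induction starting at $k=K$, base case consuming the threshold \eqref{eq-k-stageI} via the stage-I bounds on $u_1^K$ and $u_2^K$, inductive step driven by the contraction $u_3^{\tilde k_0+1}\le\sigma^{m/2}u_3^{\tilde k_0}$ from Proposition~\ref{prop-4}) matches the paper's proof, and your three-term decomposition of $H_i^k-\nabla^2F(\ox^k)$ is exactly the paper's \eqref{eq-exp-H-Havr}. However, there is a genuine gap in your inductive step. You assert that $\epsilon^{\tilde k_0+1}$ and $\delta^{\tilde k_0+1}$ "are dominated by appropriate components of $\bq_3^{\tilde k_0+1}$," and that "the Hessian tracking error is controlled by the contracted $u_3^{\tilde k_0+1}$." Neither is true: by its definition in \eqref{eq-J3}, $\epsilon^k$ contains the term $\frac{1}{\sqrt n}\|\bH^k-\bW^\infty\bH^k\|_F$, and the Hessian sandwich \eqref{ineq-M1M2} likewise hinges on $\|H_i^k-\oH^k\|_F$. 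These are components of $u_2^k$, not of $u_3^k$; $\bq_3^k$ contains only the iterate consensus error, the gradient tracking error, and $\|\ox^k-x^*\|$. Moreover, $u_2^k$ does \emph{not} contract on its own in stage II: its recursion \eqref{ineq-2} carries the forcing term $\frac{5L_2}{4}\|\bx^{k+1}-\bx^k\|$, which does not vanish. So the contraction of $u_3$ alone does not propagate either condition.

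The missing idea is the coupled Lyapunov quantity the paper introduces, $A_k\triangleq u_2^k+\frac{5L_1L_2}{M_1(1-\sigma^{m/2})}u_3^k$. The coefficient is tuned so that the forcing term in the $u_2$ recursion, bounded via $\|\bx^{k+1}-\bx^k\|\le\frac{4L_1}{M_1}u_3^k$, is exactly absorbed by the geometric decay of the $u_3$ part, yielding the monotonicity $A_{k+1}\le A_k$ (not contraction, but boundedness suffices). Then $\kappa_F\epsilon^k+\delta^k\le\frac{\kappa_F}{M_1\sqrt n}A_k+\frac{1}{40}\sigma^{m/2}\le\frac{\kappa_F}{M_1\sqrt n}A_K+\frac{1}{40}\sigma^{m/2}$ and $\|H_i^k-\nabla^2F(\ox^k)\|_F\le u_2^k\le A_k\le A_K\le\frac{\mu}{41}$ for all $k\ge K$, which is what makes both \eqref{ineq-eps-delta} and \eqref{ineq-M1M2} survive the induction. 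Your base-case analysis is essentially right (and the bound $(u_3^K)^2\le\frac{100\kappa_F\sigma^{-5m/2}}{(1-\sigma^2)^2}u_1^K$ relating $u_3^K$ to $u_1^K$ via strong convexity is the remaining ingredient you would need to make the threshold \eqref{eq-k-stageI} come out), but without the $A_k$ coupling the inductive step as you describe it would fail.
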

\begin{proof}
	See Appendix \ref{prf-prop-5}.
\end{proof}

By combining Propositions \ref{prop-4} and \ref{prop-5}, we complete the proof of Theorem \ref{theo-3}.

\begin{figure*}[tb]
	\begin{align}\label{eq-J3}
		\hspace{-3mm}\bq_3^k \!\triangleq \! \begin{pmatrix}
			\|\bx^{k} \!-\! \bW^\infty \bx^{k} \| \\
			\frac{1}{L_1} \|\bg^{k} \!-\! \bW^\infty \bg^{k} \| \\
			\sqrt{n} \|\ox^{k} \!-\! x^* \|\end{pmatrix}\!,~
		\bJ^{[3]}\!\triangleq\! \begin{bmatrix}
			\sigma^m \left(1 + \alpha \kappa_F \epsilon^{\tilde{k}_0} \right) & \sigma^m \alpha (1 + \epsilon^{\tilde{k}_0}) \kappa_F & 2 \sigma^m \alpha \epsilon^{\tilde{k}_0} \kappa_F \\
			\sigma^m \left( 2 \!+\! \alpha \kappa_F \!+\! 2\alpha \kappa_F \epsilon^{\tilde{k}_0} \right) & \sigma^m \left(1 \!+\!\alpha \kappa_F(\sigma^m \!+\! 2\epsilon^{\tilde{k}_0}) \right) & \sigma^m \alpha \left(1 \!+\! 2\kappa_F \epsilon^{\tilde{k}_0} \!+\! \delta^{\tilde{k}_0} \right) \\
			\alpha \kappa_F \left(1 + \epsilon^{{\tilde{k}_0}} \right) & \alpha \kappa_F \epsilon^{{\tilde{k}_0}} & 1 - \alpha + \alpha \delta^{\tilde{k}_0} + \alpha \kappa_F \epsilon^{{\tilde{k}_0}}
		\end{bmatrix},
	\end{align}
	where  $\kappa_F =\frac{ L_1 }{\mu}$,
	$\delta^k \triangleq \frac{ L_2}{2\mu} \|\ox^k - x^*\|$, and $\epsilon^{k} \triangleq \frac{1}{M_1} \left( \frac{L_2}{\sqrt{n}} \|\bx^{k} - \bW^\infty \bx^{k} \| + \frac{1}{\sqrt{n}}\|\bH^{k} - \bW^\infty \bH^{k}\|_F+{c_k\mu}\right)$ for all $k\ge 0$.
\end{figure*}

\section{Numerical Experiments}
In the numerical experiments, we consider quadratic programming and logistic regression problems over a network. The network is randomly generated with $n$ nodes connected by $\frac{\tau n(n-1)}{2}$  edges, where $\tau \in(0,1] $ is the connectivity ratio.
We pre-compute the optimal solution $x^*$ with a centralized Newton's method. The performance metric is the relative error, defined as $\frac{1}{n}{\|\mathbf{x}^{k}-\mathbf{x}^{*}\|^2}/{\|\mathbf{x}^{0}-\mathbf{x}^{*}\|^2}$.
We compare the proposed method with the first-order method ABC \cite{xu2021distributed}, the multi-step consensus accelerated first-order method Mudag \cite{ye2020multi}, and the second-order method SONATA \cite{scutari2019distributed}.
We use hand-optimized step sizes for all the algorithms.  The experiments are done on a laptop with an Intel(R) Core(TM) i7 CPU running at 1.80GHz, 16.0 GB of RAM, and Windows 10 operating system.

\subsection{Quadratic Programming}
We demonstrate that the convergence rate of the proposed Newton's method is independent of the condition number $\kappa_F$. Consider solving a quadratic programming problem over a network, i.e.,
\begin{equation*}
x^*=\arg\min_{x \in \mathbb{R}^d} \sum_{i=1}^{n}\left(\frac{1}{2} x^T Q_{i} x+p_{i}^T x \right).
\end{equation*}
Each node $i$ has private data $Q_{i} \in \mathbb{R}^{d \times d}\succ 0$ and $p_{i} \in \mathbb{R}^{d}$, whose elements are generated according to the standard Gaussian distribution.  To show the independence of the condition number $\kappa_{F}$, we generate matrices $Q_1$, $\ldots$, $Q_n$ under different condition numbers $\kappa_{F}=10$, $10^2$, $10^4$. We set $n=10$, $\tau=0.2$, and $d=30$. We compare the proposed Newton's method with ABC. For the proposed Newton's method, to further show that its convergence rate is connected with the number of multi-step consensus inner-loops, we set $m=15$ and $m=20$. To show the asymptotic rate with $m\to \infty$, we consider $\kappa_{F}=10^4$ and $m=k$, where $k$ is the index of iteration. We use Rank-$3$ compression, set $\gamma=0.03$, and use the increasing step sizes $\alpha_k=\min\{1, 0.02 \times 1.1^k \}$.
In ABC, we set the step sizes $\alpha=0.4$, $0.05$, $0.0008$ for the condition numbers $\kappa_F=10$, $10^2$, $10^4$, respectively.

Fig. \ref{fig-kappa}  shows the relative error versus the number of iterations. For the first-order method ABC, the convergence becomes slower with a larger $\kappa_{F}$. However, for each fixed $m$, our proposed Newton's method converges at the same rate under different $\kappa_{F}$. Also, the convergence rate is faster with a larger $m$.  When we increase the number of multi-step consensus inner-loops to $m=k$, the convergence rate becomes much faster. These experiment results corroborate the theoretical findings.


\begin{figure}[t]
	\centering
	\includegraphics[width=0.45\textwidth]{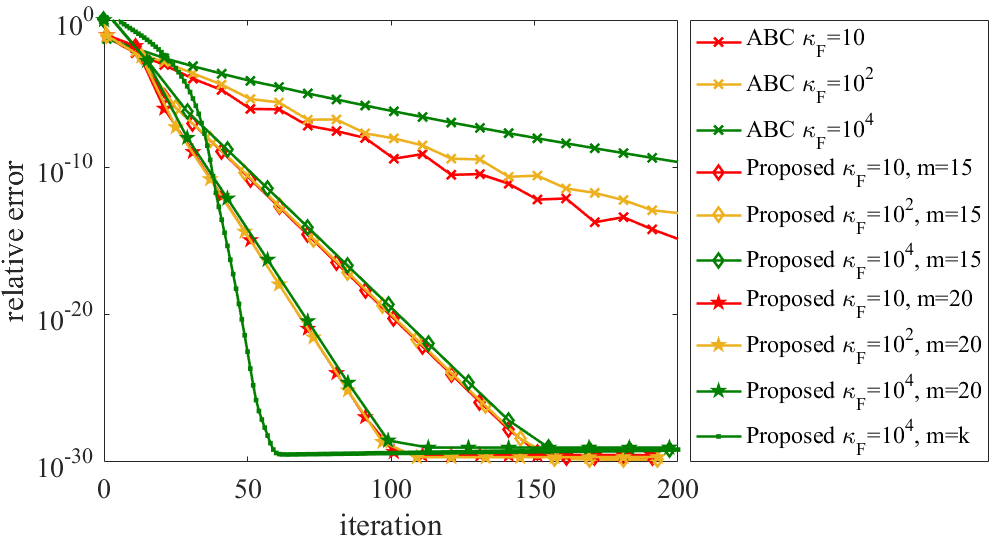}
	\caption{Robustness to $\kappa_{F}$}
	\label{fig-kappa}
\end{figure}

\subsection{Logistic Regression}
To further show the efficiency of the proposed Newton's method, we solve a logistic regression problem in the form of
\begin{align}\nonumber
{{x}}^{*} = \underset{{x} \in \mathbb{R}^{d}}{\operatorname{argmin}} ~ \frac{\rho}{2}\|{x}\|^{2}+\sum_{i=1}^{n} \sum_{j=1}^{m_{i}} \ln \left(1+\exp \left(-\left(\mathbf{o}_{i j}^{T} {x}\right) \mathbf{p}_{i j}\right)\right),
\end{align}
where each node $i$ privately owns $m_i$ training samples $(\mathbf{o}_{i j}, \mathbf{p}_{i j}) \in \mathbb{R}^{d} \times\{-1,+1\}$, $j=1, \ldots, m_{i}$. The elements of $\mathbf{o}_{i j}$ are randomly generated following the standard Gaussian distribution and those of $\mathbf{p}_{i j}$ are generated following the uniform distribution on $\{-1,1\}$. The regularization term $\frac{\rho}{2}\|{x}\|^{2}$ parameterized by $\rho>0$ is to avoid overfitting.

The settings are as follows. There are $n = 30$ nodes with connectivity ratio $\tau=0.2$. Each node has $100$ samples, i.e. $m_i = 100, \forall i$. We set the dimension $d =20$ and the regularization parameter $\rho=0.001$.

\begin{figure*}[htbp] 	
	\centering	
	\subfigure[]{
		\begin{minipage}[t]{0.32\linewidth}
			\centering
			\includegraphics[height=1.5in]{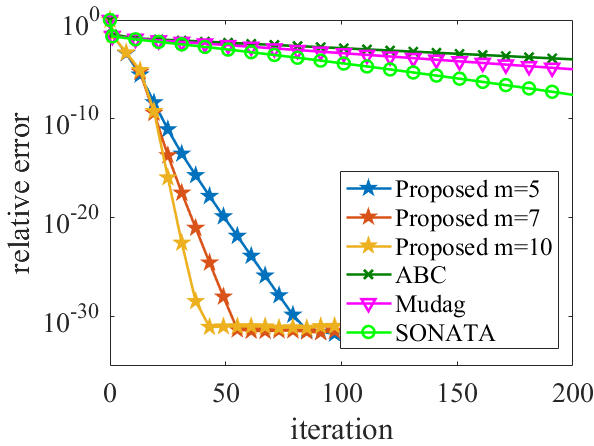}
		\end{minipage}%
	}%
	\subfigure[]{
		\begin{minipage}[t]{0.32\linewidth}
			\centering
			\includegraphics[height=1.5in]{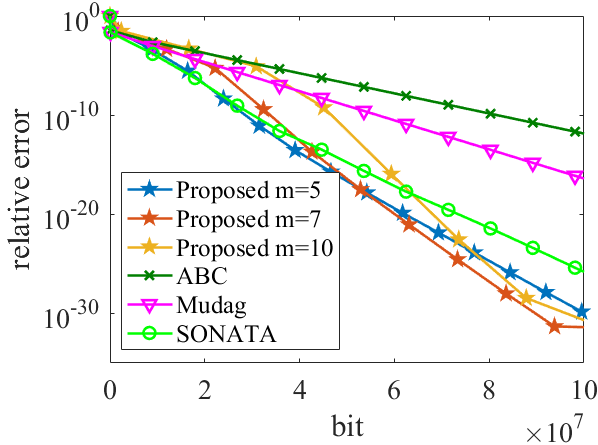}
		\end{minipage}%
	}%
	\centering
	\subfigure[]{
		\begin{minipage}[t]{0.32\linewidth}
			\centering
			\includegraphics[height=1.5in]{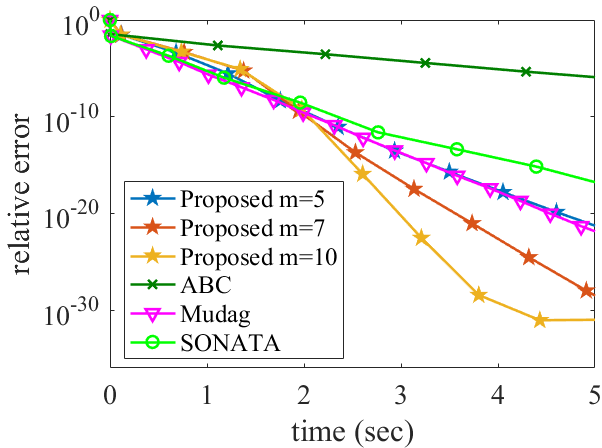}
		\end{minipage}%
	}%
	\caption{Top-$K$ for logistic regression}
	\label{fig-top}
\end{figure*}
\begin{figure*}[htbp] 	
	\centering	
	\subfigure[]{
		\begin{minipage}[t]{0.32\linewidth}
			\centering
			\includegraphics[height=1.5in]{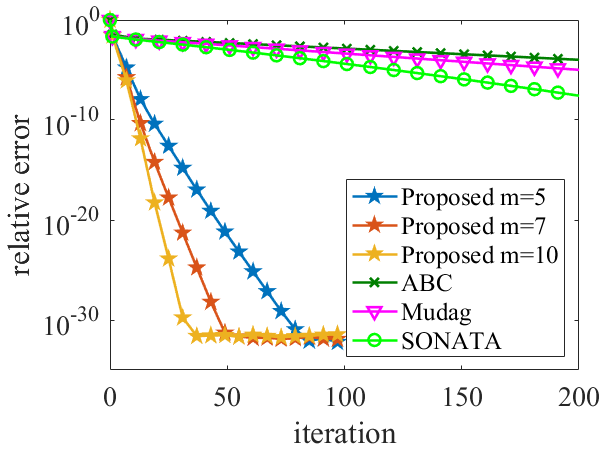}
		\end{minipage}%
	}%
	\subfigure[]{
		\begin{minipage}[t]{0.32\linewidth}
			\centering
			\includegraphics[height=1.5in]{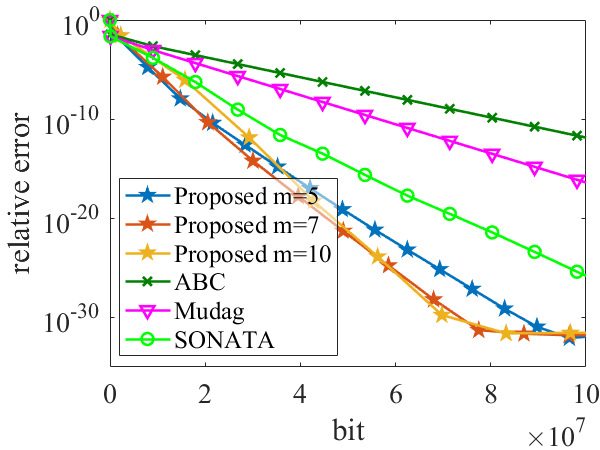}
		\end{minipage}%
	}%
	\centering
	\subfigure[]{
		\begin{minipage}[t]{0.32\linewidth}
			\centering
			\includegraphics[height=1.5in]{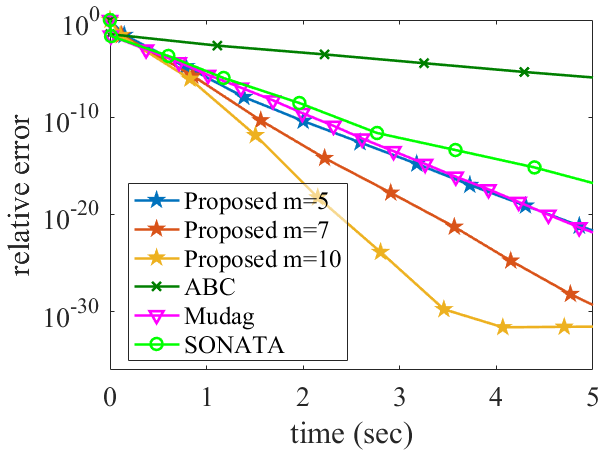}
		\end{minipage}%
	}%
	\caption{Rank-$K$ for logistic regression}
	\label{fig-lowrank}
\end{figure*}

We compare our proposed method with ABC, Mudag, and SONATA. The step size of ABC is set to $\alpha=0.9$. The step sizes of Mudag are set to the theoretically optimal values suggested in \cite{ye2020multi}. For SONATA, we use the second-order approximation of the local objective such that $f_i(x)\approx f_i(x_i^k)+\langle g_i^k, x-x_i^k \rangle +\frac{1}{2}(x-x_i^k)^T (\nabla^2 f_i(x_i^k)+\epsilon I_d)(x-x_i^k)$, where $g_i^k$ is the gradient approximation of node $i$ at the $k$-th iteration. Here, we set $\epsilon=0.45$.

We test the proposed method using both the Rank-$K$ and Top-$K$ compression operators.
For the Top-$K$ compression operator, node $i$ transmits $K=20$ entries of the matrix with largest absolute values. We use the increasing step sizes $\alpha_k=\min\left\{1,0.2 \times 1.1^k\right\}$. We set $\gamma=0.06$.
For the Rank-$K$ compression operator, node $i$ performs a singular value decomposition on the matrix and transmits the largest $K=3$ singular values as well as the corresponding singular vectors. We use the increasing step sizes $\alpha_k=\min\left\{1,0.1 \times 1.1^k\right\}$. We set $\gamma=0.06$. For both experiments, we set $M=0$ and the algorithm still works well. We speculate that this is because we use the CG step to avoid computing the inverse of $H_i^k+MI_d$ and $H_i$ becomes closer and closer to  $\oH^k$ as the iteration proceeds.

Fig. \ref{fig-top} and Fig. \ref{fig-lowrank}  illustrate the relative error versus the number of iterations, the number of bits for communication and the running time of the Top-$K$  and Rank-$K$ compression operators, respectively. We run the proposed method with different $m$, i.e., different numbers of multi-step consensus inner-loops. From Fig. \ref{fig-top} and Fig. \ref{fig-lowrank}, we can observe that the proposed Newton's method has the best performance. Mudag outperforms ABC because Mudag uses multi-step consensus and acceleration. SONATA outperforms Mudag due to the use of the local Hessian $\nabla^2 f_i(x_i^k)$ to approximate the global Hessian. The proposed Newton's method outperforms SONATA since we use DAC to track the global Hessian. Note that our communication cost is acceptable, thanks to the introduction of a compression procedure.

The convergence rate of the proposed Newton's method scales with $m$,  which validates the theoretical result in Theorem \ref{theo-3}. We can also see $m$ balances between the number of iterations and the transmitted bits. A larger $m$ improves the iteration complexity but increases the communication cost.

\section{Conclusions}
This paper considers a finite-sum minimization problem over a decentralized network.
We propose a communication-efficient decentralized Newton's method for solving it, which has provably faster convergence than first-order algorithms. Multi-step consensus that balances between computation and communication is used for communicating local copies of the decision variable and gradient approximations. We also use compression with error compensation for transmitting the local Hessian approximations, which utilizes the global second-order information while avoiding high communication cost.
We present a novel convergence analysis and obtain a theoretically faster convergence rate than those of first-order algorithms. 
{One future direction is to develop stochastic second-order algorithms with provably $\kappa_F$-independent super-linear convergence rate, considering the case when computing the local full gradient and Hessian is not affordable on each node. Another interesting direction is to develop decentralized second-order algorithms to solve nonconvex and nonsmooth problems that arise in various machine learning and signal processing applications.  }

\section{Appendix}\label{prf-global-linear}

\subsection{Proof of Proposition \ref{prop-compression operator}}\label{app-compression operator}
\begin{proof}	
For the Rank-$K$ compression operator, we compute
\begin{align}\label{eq-Q1}
\begin{aligned}
\|\mathcal{Q}(A)-A\|_F^2
=&\big\|\sum\nolimits_{i={K+1}}^{d} \sigma_i u_iv_i^T\big\|_F^2\\
\le& (d-K) \sigma_{K+1}^2\le\frac{d-K}{K}\|\mathcal{Q}(A)\|_F^2.
\end{aligned}
\end{align}
Then, we have
\begin{align}\label{eq-Q2}
\hspace{-4mm}\|A\|_F^2\!=\!\|\mathcal{Q}(A)\!-\!A\|_F^2\!+\!\|\mathcal{Q}(A)\|_F^2\!\ge\! \frac{d}{d\!-\!K}\|\mathcal{Q}(A)\!-\!A\|_F^2,
\end{align}
where we substitute \eqref{eq-Q1} in the  inequality.
Thus, \eqref{eq-Q2} implies  that
\begin{align} \label{eq-new-01}
\|\mathcal{Q}(A)-A\|_F\le \sqrt{1-\frac{K}{d}}\|A\|_F\le \left(1-\frac{K}{2d}\right)\|A\|_F.
\end{align}
For the Top-$K$ compression operator, let $\{a_1, a_2, \ldots, a_{d^2}\}$ be the entries of $A$ sorted by their absolute values in a descending order. Then, we have
\begin{align*}
		\|\mathcal{Q}(A)-A\|_F^2\!
		=\!\sum_{i = K\!+\!1}^{d^2}\! a_i^2
		\!\le\! (d^2-K) a_{K+1}^2 \le\frac{d^2-K}{K}\|\mathcal{Q}(A)\|_F^2.
\end{align*}
Following a similar argument for deriving \eqref{eq-new-01}, for the Top-$K$ compression operator, we have
\begin{align}
\hspace{-2mm}	\|\mathcal{Q}(A)-A\|_F\le \sqrt{1-\frac{K}{d^2}}\|A\|_F\le \left(1-\frac{K}{2d^2}\right)\|A\|_F.
\end{align}
This completes the proof.
\end{proof}

\subsection{Preliminary}
This section gives some preliminaries that are useful in the ensuing convergence analysis.
The following lemma bounds the consensus errors of the iterate $\bx^k$ and the gradient approximation $\bg^k$.
\begin{lem}\label{lem-preliminary-x-g}
Under Assumptions \ref{asm-W} and \ref{asp:lipschitz-continuous}, for all $k\ge 0$, we have
\begin{align}\label{ineq:xkplus1}
\hspace{-5mm}\|\bx^{k + 1}\!\! -\!\! \bW^\infty \bx^{k + 1} \|\! \leq\! \sigma^m ( \|\bx^k\! \!-\!\! \bW^\infty \bx^k\| \!+\! \alpha \|\bd^k\! \!-\!\! \bW^\infty \bd^k\|)
\end{align}
and
\begin{align}\label{ineq:gkplus1}
\hspace{-3mm}\|\bg^{k + 1} \!-\! \bW^\infty \bg^{k + 1} \|\! \leq\! \sigma^m ( \|\bg^k \!-\! \bW^\infty \bg^k\| \!+\! L_1 \|\bx^{k + 1} \!-\! \bx^k\|).
\end{align}
\end{lem}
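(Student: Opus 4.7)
The plan is to exploit the standard fact that the multi-step mixing matrix $\bW^m$ contracts towards $\bW^\infty$ with rate $\sigma^m$ on the ``consensus-free'' subspace. Specifically, under Assumption~\ref{asm-W}, $W$ is symmetric and doubly stochastic with second largest singular value $\sigma$, which gives $\bW \bW^\infty = \bW^\infty \bW = \bW^\infty$, hence $(\bW^m - \bW^\infty)\bW^\infty = 0$ and $\|\bW^m - \bW^\infty\| \leq \sigma^m$. These two facts are the only ingredients needed.

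For the first inequality, I would start from the update rule $\bx^{k+1} = \bW^m(\bx^k - \alpha \bd^k)$ and subtract $\bW^\infty \bx^{k+1}$. Because $\bW^\infty \bW^m = \bW^\infty$, this simplifies to
\begin{equation*}
\bx^{k+1} - \bW^\infty \bx^{k+1} = (\bW^m - \bW^\infty)(\bx^k - \alpha \bd^k).
\end{equation*}
Using $(\bW^m - \bW^\infty)\bW^\infty = 0$, I can freely insert $-\bW^\infty \bx^k$ and $+\alpha \bW^\infty \bd^k$ inside the operator without changing the value, yielding
\begin{equation*}
\bx^{k+1} - \bW^\infty \bx^{k+1} = (\bW^m - \bW^\infty)\bigl[(\bx^k - \bW^\infty \bx^k) - \alpha (\bd^k - \bW^\infty \bd^k)\bigr].
\end{equation*}
The triangle inequality together with $\|\bW^m - \bW^\infty\| \leq \sigma^m$ then gives \eqref{ineq:xkplus1}.

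For the second inequality, the same maneuver applied to $\bg^{k+1} = \bW^m(\bg^k + \nabla f(\bx^{k+1}) - \nabla f(\bx^k))$ produces
\begin{equation*}
\bg^{k+1} - \bW^\infty \bg^{k+1} = (\bW^m - \bW^\infty)\bigl[(\bg^k - \bW^\infty \bg^k) + (\nabla f(\bx^{k+1}) - \nabla f(\bx^k))\bigr],
\end{equation*}
after absorbing $\bW^\infty(\nabla f(\bx^{k+1}) - \nabla f(\bx^k))$ into the zero-annihilated component. The only extra step is bounding $\|\nabla f(\bx^{k+1}) - \nabla f(\bx^k)\|$ by $L_1 \|\bx^{k+1} - \bx^k\|$, which follows from Assumption~\ref{asp:lipschitz-continuous} applied coordinate-wise: $\|\nabla f(\bx^{k+1}) - \nabla f(\bx^k)\|^2 = \sum_{i=1}^n \|\nabla f_i(x_i^{k+1}) - \nabla f_i(x_i^k)\|^2 \leq L_1^2 \|\bx^{k+1} - \bx^k\|^2$. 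Another application of the triangle inequality and the bound $\|\bW^m - \bW^\infty\| \leq \sigma^m$ yields \eqref{ineq:gkplus1}.

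There is no real obstacle here; the lemma is a clean consequence of the mixing-matrix contraction identity plus the block-wise Lipschitz bound for the aggregated gradient. The only subtlety worth flagging is the use of the invariance $(\bW^m - \bW^\infty)\bW^\infty = 0$, which lets one replace $\bx^k$ and $\bg^k$ by their consensus residuals inside the operator norm bound; without this step one would only obtain bounds with $\|\bx^k\|$ and $\|\bg^k\|$ on the right-hand side, which would be too weak for the subsequent Lyapunov-style analysis.
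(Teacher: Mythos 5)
Your proposal is correct and follows essentially the same route as the paper: both rely on $\bW\bW^\infty=\bW^\infty\bW=\bW^\infty$ to get $\|\bW^m-\bW^\infty\|\le\sigma^m$ and $(\bW^m-\bW^\infty)\bW^\infty=0$, rewrite the consensus residual as $(\bW^m-\bW^\infty)$ applied to residual quantities, and finish with the triangle inequality and the block-wise Lipschitz bound. The only cosmetic difference is that the paper keeps $\nabla f(\bx^{k+1})-\nabla f(\bx^k)$ intact rather than subtracting its $\bW^\infty$-component, which yields the identical estimate.
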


\begin{proof}
First, we show that multi-step consensus with $m$ inner loops improves the convergence rate of the consensus error from $\sigma$ to $\sigma^m$. To do this, we compute
\begin{align*}
\bW^m = (\bW - \bW^\infty + \bW^\infty)^m = (\bW - \bW^\infty)^m +\bW^\infty,
\end{align*}
where we use $(\bW^\infty)^2 = \bW^\infty$ and $\bW \bW^\infty = \bW^\infty \bW = \bW^\infty$. Thus, we have
\begin{align*}
\|\bW^m - \bW^\infty\| = \|\bW - \bW^\infty\|^m \leq \sigma^m.
\end{align*}
Then, $\bx^{k + 1} = \bW^m  (\bx^k - \alpha \bd^k)$ implies that
\begin{align*}
\begin{aligned}
\bx^{k + 1}\!-\!\bW^\infty \bx^{k + 1}\! =& (\bW^m - \bW^\infty)(\bx^k - \alpha \bd^k) \\
=& (\bW^m\! - \!\bW^\infty)\big(\bx^k \!- \!\alpha \bd^k \!-\! \bW^\infty (\bx^k\! -\! \alpha \bd^k)\big).
\end{aligned}
\end{align*}
Taking the norm $\|\cdot\|$ on both sides of the above equality and using the triangle inequality, we get \eqref{ineq:xkplus1}.

Second, according to  $\bg^{k + 1} = \bW^m (\bg^k + \nabla f(\bx^{k + 1}) - \nabla f(\bx^k))$, we have
\begin{align*}
\begin{split}
&\bg^{k + 1} - \bW^\infty \bg^{k + 1} \\
= & (\bW^m - \bW^\infty) \big(\bg^k - \bW^\infty \bg^k+\nabla f(\bx^{k + 1}) - \nabla f(\bx^{k })\big).
\end{split}		
\end{align*}
Taking the norm $\|\cdot\|$ on both sides of the above inequality and using the triangle inequality and Assumption \ref{asp:lipschitz-continuous}, we get \eqref{ineq:gkplus1} and complete the proof.
\end{proof}

Since node $i$ uses $g_i^k$ to estimate the global gradient, we expect $g_i^k$ to be zero when the algorithm achieves optimality. For a convergent algorithm, the iterates should be fixed such that $\bx^{k+1}$ is equal to $\bx^k$.  These observations motivate us to bound the norm of the gradient approximation $\bg^k$ and the difference between two successive iterates.
\begin{lem}\label{lem:xkplus1-xk}
Under Assumptions \ref{asm-W} and \ref{asp:lipschitz-continuous}, if $c_k\le 1$ and condition \eqref{eq-M1M2} holds for a certain $k_0$, then we have
\begin{align}\label{ineq:gknorm}
\begin{split}
\hspace{-4mm}\|\bg^{k}\|\!\leq\! \|\bg^{k}\! \!-\!\! \bW^\infty \bg^{k}\| \!+\! L_1 \|\bx^{k} \!-\! \bW^\infty \bx^{k}\| \! \!+\!\! \sqrt{n} \| \nabla F(\ox^{k})\|
\end{split}
\end{align}
 for all $k\ge 0$
and
\begin{align}\label{eq-11}
\hspace{-2mm}\|\bx^{k_0 + 1} \!-\! \bx^{k_0} \|
&\leq \!\left( 2 + \frac{2 \alpha L_1}{M_1} \right) \|\bx^{k_0} \!-\! \bW^\infty \bx^{k_0}\|  \\
& +\! \frac{2 \alpha}{M_1} \|\bg^{{k_0}} \!-\! \bW^\infty \bg^{{k_0}}\|\! +\! \frac{2 \alpha \sqrt{n}}{M_1} \| \nabla F(\ox^{{k_0}})\|.\nonumber
\end{align}
\end{lem}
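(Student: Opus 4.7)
\textbf{Proof Plan for Lemma \ref{lem:xkplus1-xk}.}

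The plan is to handle the two bounds in order, with the first feeding into the second.

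For \eqref{ineq:gknorm}, the key observation is the standard dynamic average consensus tracking identity: the average of the gradient approximations always coincides with the average of the current local gradients. I would establish by induction on $k$ that $\bW^\infty \bg^k = \bW^\infty \nabla f(\bx^k)$. The base case $k=0$ is the initialization $g_i^0 = \nabla f_i(x_i^0)$. For the inductive step, multiply the update $\bg^{k+1} = \bW^m(\bg^k + \nabla f(\bx^{k+1}) - \nabla f(\bx^k))$ on the left by $\bW^\infty$ and use $\bW^\infty \bW^m = \bW^\infty$ (a consequence of Assumption \ref{asm-W}). Then, by the triangle inequality, $\|\bg^k\| \leq \|\bg^k - \bW^\infty \bg^k\| + \|\bW^\infty \nabla f(\bx^k)\|$. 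Writing $\bW^\infty \nabla f(\bx^k) = \mathbf{1}_n \otimes \overline{\nabla f}(\bx^k)$ gives $\|\bW^\infty \nabla f(\bx^k)\| = \sqrt{n}\,\|\overline{\nabla f}(\bx^k)\|$. To compare $\overline{\nabla f}(\bx^k)$ with $\nabla F(\ox^k)$, add and subtract $\nabla f_i(\ox^k)$ inside the sum and apply Lipschitz continuity (Assumption \ref{asp:lipschitz-continuous}) together with Cauchy--Schwarz: $\|\overline{\nabla f}(\bx^k) - \nabla F(\ox^k)\| \leq \frac{L_1}{n}\sum_i \|x_i^k - \ox^k\| \leq \frac{L_1}{\sqrt{n}} \|\bx^k - \bW^\infty \bx^k\|$. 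Combining these gives \eqref{ineq:gknorm}.

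For \eqref{eq-11}, I would use the decomposition $\bx^{k_0+1} - \bx^{k_0} = \bW^m \bx^{k_0} - \bx^{k_0} - \alpha \bW^m \bd^{k_0}$. For the first difference, using $\bW^m \bW^\infty = \bW^\infty$, write $\bW^m \bx^{k_0} - \bx^{k_0} = (\bW^m - I)(\bx^{k_0} - \bW^\infty \bx^{k_0})$, so that $\|\bW^m \bx^{k_0} - \bx^{k_0}\| \leq \|\bW^m - I\|\,\|\bx^{k_0} - \bW^\infty \bx^{k_0}\| \leq 2\|\bx^{k_0} - \bW^\infty \bx^{k_0}\|$ by the triangle inequality and $\|\bW^m\|\leq 1$. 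For the second term, $\|\bW^m \bd^{k_0}\| \leq \|\bd^{k_0}\|$. To bound $\|\bd^{k_0}\|$, invoke the inexact CG relation from Fact \ref{fact-CG}, namely $(\diag\{H_i^{k_0}\} + MI_{nd})\bd^{k_0} = \bg^{k_0} + \br^{k_0}$ with $\|\br^{k_0}\| \leq c_{k_0-1}\|\bg^{k_0}\| \leq \|\bg^{k_0}\|$. Since \eqref{eq-M1M2} holds at $k_0$, the block-diagonal operator is lower bounded by $M_1 I_{nd}$, yielding $\|\bd^{k_0}\| \leq \frac{1+c_{k_0-1}}{M_1}\|\bg^{k_0}\| \leq \frac{2}{M_1}\|\bg^{k_0}\|$. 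Finally, plug in \eqref{ineq:gknorm} to replace $\|\bg^{k_0}\|$ by the three pieces on the right-hand side, and collect terms to recover the desired inequality.

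Both parts are essentially bookkeeping; the only subtlety is making sure to use $\bW^\infty$ as a projector (so that $\bW^m\bW^\infty = \bW^\infty$ and $(\bW^m - I)\bW^\infty = 0$) to extract the consensus-error factors, and to use $c_{k_0-1}\leq 1$ (per the hypothesis $c_k \leq 1$) to absorb the inexactness constant into a factor of $2$. I do not expect any real obstacle, since all the required ingredients (Lipschitz continuity, the CG residual bound, and condition \eqref{eq-M1M2}) are directly available by assumption.
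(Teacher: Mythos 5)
Your proposal is correct and follows essentially the same route as the paper: the DAC identity $\og^k=\overline{\nabla f}(\bx^k)$ plus the triangle inequality and Lipschitz continuity for \eqref{ineq:gknorm}, and the decomposition $\bx^{k_0+1}-\bx^{k_0}=(\bW^m-I_{nd})(\bx^{k_0}-\bW^\infty\bx^{k_0})-\alpha\bW^m\bd^{k_0}$ together with the CG residual bound and $M_1$-lower bound from \eqref{eq-M1M2} to get $\|\bd^{k_0}\|\le \tfrac{2}{M_1}\|\bg^{k_0}\|$ for \eqref{eq-11}. The only cosmetic difference is that you spell out the induction behind $\bW^\infty\bg^k=\bW^\infty\nabla f(\bx^k)$, which the paper asserts directly.
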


\begin{proof}
From the update $\bg^{k + 1} = \bW^m (\bg^k + \nabla f(\bx^{k + 1}) - \nabla f(\bx^{k}) )$, we have  $\og^{k} = \overline{ \nabla f} (\bx^{k})$.  Under Assumption \ref{asp:lipschitz-continuous}, we have
\begin{align}\label{eq-barg-grad}
\|\og^{k} - \nabla F(\ox^{k})\| \leq \frac{L_1}{\sqrt{n}} \|\bx^{k} - \bW^\infty \bx^{k} \|,
\end{align}
which implies that
\begin{align}\label{eq-14}
\hspace{-3mm}	\|\bg^{k}\| \!\leq & \|\bg^{k} - \bW^\infty \bg^{k}\| + \sqrt{n} \|\og^{k}\| \\
\leq & \|\bg^{k} \!-\! \bW^\infty \bg^{k}\| \!+ \!\sqrt{n} \|\og^{k} \!-\! \nabla F(\ox^{k})\| \!+ \!\sqrt{n} \| \nabla F(\ox^{k})\| \nonumber\\
\leq & \|\bg^{k} \!-\! \bW^\infty \bg^{k}\| \!+\! L_1 \|\bx^{k} - \bW^\infty \bx^{k}\|  + \sqrt{n} \| \nabla F(\ox^{k})\|.\nonumber
\end{align}
This inequality gives \eqref{ineq:gknorm}.

If \eqref{eq-M1M2} holds for a certain $k_0$, according to the fact that  $( \diag\{H_i^{k + 1}\}+M I_{nd}) \bd^{k + 1} = \bg^{k + 1} + \br^{k + 1}$, we have
\begin{align}\label{ineq:dknorm-new}
	\|\bd^{k_0}-\bW^{\infty}\bd^{k_0}\|\le\|\bd^{k_0}\| \leq \frac{\| \bg^{k_0}+\br^{k_0}\|}{M_1} \leq \frac{2\|\bg^{k_0}\|}{M_1},
\end{align}
where we use $c_k\le1$ in the last inequality and the first inequality is given for later use (see \eqref{eq-diff}).
According to $\bx^{k + 1} = \bW^m  (\bx^k - \alpha \bd^k)$, we have
\begin{align}\label{eq-12}
\hspace{-3mm}\|\bx^{{k_0} + 1} \!-\! \bx^{k_0} \|
\leq&  \|(\bW^m \!-\! I_{nd}) (\bx^{k_0} - \bW^\infty \bx^{k_0})\| + \alpha \|\bd^{k_0}\| \nonumber\\
\leq&  2 \|\bx^{k_0} - \bW^\infty \bx^{k_0}\| + \frac{2 \alpha}{M_1} \|\bg^{k_0}\|,	
\end{align}
where we substitute \eqref{ineq:dknorm-new} in the last inequality.
By substituting \eqref{eq-14} into \eqref{eq-12}, we get \eqref{eq-11} and complete the proof.
\end{proof}

\subsection{Proof of Proposition \ref{prop-1}}\label{prf-theorem1}
\begin{proof}
First, we prove \eqref{eq-rate-1} in three steps. We will bound the consensus error $\|\bx^{{k_0} + 1} - \bW^\infty \bx^{{k_0} + 1} \|^2 $,  the gradient tracking error $\frac{1}{L_1^2} \|\bg^{{k_0} + 1} - \bW^\infty \bg^{{k_0} + 1} \|^2 $, and the network optimality gap $\frac{n}{L_1} (F(\ox^{{k_0} + 1}) - F(x^*))$ in Steps I, II, and III, respectively.

{\bf Step I:} The following lemma bounds the consensus error $\|\bx^{{k_0} + 1} - \bW^\infty \bx^{{k_0} + 1} \|^2 $.
\begin{lem}\label{lem-3}
Under Assumptions  \ref{asm-W} and \ref{asp:lipschitz-continuous}, if $c_k\le 1$, condition  \eqref{eq-M1M2} holds for a certain $k_0$, and  $\alpha$ satisfy \eqref{eq-a-c}, then we have
\begin{align}\label{eq-cons-x-J}
\begin{split}
&\|\bx^{{k_0}+1}-\bW^{\infty}\bx^{{k_0}+1}\|^2\\
\le& \bJ^{[1]}_{11}\|\bx^{{k_0}}-\bW^{\infty}\bx^{{k_0}}\|^2+ \bJ^{[1]}_{12}\cdot \frac{1}{L_1^2}\|\bg^{k_0}-\bW^{\infty}\bg^{k_0}\|^2\\
&+ \bJ^{[1]}_{13}\cdot\frac{n}{L_1}\left(F(\ox^{k_0})-F(x^*)\right).
\end{split}
\end{align}
\end{lem}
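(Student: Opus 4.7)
\textbf{Proof plan for Lemma \ref{lem-3}.} The starting point is the one-step consensus contraction \eqref{ineq:xkplus1} in Lemma \ref{lem-preliminary-x-g}. Squaring \eqref{ineq:xkplus1} and applying Young's inequality $(a+b)^2 \le (1+\eta)a^2 + (1+1/\eta)b^2$ for some $\eta>0$ to be chosen, I obtain
\begin{equation*}
\|\bx^{k_0+1}\!-\!\bW^\infty\bx^{k_0+1}\|^2 \le \sigma^{2m}(1+\eta)\|\bx^{k_0}\!-\!\bW^\infty\bx^{k_0}\|^2 + \sigma^{2m}\alpha^2\!\left(1+\tfrac{1}{\eta}\right)\!\|\bd^{k_0}\!-\!\bW^\infty\bd^{k_0}\|^2.
\end{equation*}
The first coefficient already has the correct shape to become $\bJ^{[1]}_{11}$; the natural choice is $\eta = 0.51(1-\sigma^2)/\sigma^2$, which gives $\sigma^{2m}(1+\eta)\le \sigma^2(1+\eta) = 1-0.49(1-\sigma^2)$ and $1+1/\eta \le 1/(0.51(1-\sigma^2))$. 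The remaining job is to bound the direction term by the three quantities that appear on the right-hand side of \eqref{eq-cons-x-J}, absorb a small portion of it into the first coefficient, and verify the step-size condition in \eqref{eq-a-c} keeps the numeric constants consistent with those in $\bJ^{[1]}$.

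Next I would use the bound $\|\bd^{k_0}-\bW^\infty\bd^{k_0}\|\le\|\bd^{k_0}\|\le 2\|\bg^{k_0}\|/M_1$ from \eqref{ineq:dknorm-new} (which requires condition \eqref{eq-M1M2} at $k_0$ and $c_{k_0}\le 1$), and then the triangle-style decomposition \eqref{ineq:gknorm} from Lemma \ref{lem:xkplus1-xk}, squared with Cauchy--Schwarz $(a+b+c)^2\le 3(a^2+b^2+c^2)$, to produce
\begin{equation*}
\|\bd^{k_0}-\bW^\infty\bd^{k_0}\|^2 \le \tfrac{12}{M_1^2}\left(\|\bg^{k_0}-\bW^\infty\bg^{k_0}\|^2 + L_1^2\|\bx^{k_0}-\bW^\infty\bx^{k_0}\|^2 + n\|\nabla F(\ox^{k_0})\|^2\right).
\end{equation*}
Then I would invoke $L_1$-smoothness of $F$ (a consequence of Assumption \ref{asp:lipschitz-continuous}) together with Assumption \ref{ass-strongly-cvx} to bound $\|\nabla F(\ox^{k_0})\|^2 \le 2L_1(F(\ox^{k_0})-F(x^*))$. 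Substituting these three bounds back yields an inequality of exactly the form \eqref{eq-cons-x-J}, with coefficients that are the sum of the ``$1-0.49(1-\sigma^2)$'' main term and correction pieces of size $\sigma^{2m}\alpha^2 L_1^2/[M_1^2(1-\sigma^2)]$, $\sigma^{2m}\alpha^2/[M_1^2(1-\sigma^2)]$, and $n\sigma^{2m}\alpha^2 L_1/[M_1^2(1-\sigma^2)]$, respectively.

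\textbf{Main obstacle.} The delicate part will be the bookkeeping: showing that with the step-size bound $\alpha \le M_1^2(1-\sigma^2)^3/(100 L_1 M_2\sigma^{m-1})$ from \eqref{eq-a-c}, the correction $\sigma^{2m}\alpha^2 \cdot 12 L_1^2/[M_1^2 \cdot 0.51(1-\sigma^2)]$ is small enough that it, added to $\sigma^{2m}(1+\eta)$, still fits under $1-0.49(1-\sigma^2)$ (I would likely use a marginally smaller $\eta$ to leave slack), while simultaneously the $\bg$-coefficient drops below $0.004(1-\sigma^2)^3/L_1^2$ and the $(F(\ox)-F(x^*))$-coefficient drops below $64\sigma^{2m}\alpha^2 L_1/[(1-\sigma^2)M_1^2]$. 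Since $\alpha^2\sigma^{2m}L_1^2/M_1^2$ inherits a factor of $(1-\sigma^2)^6/\sigma^{2(m-1)}\cdot\sigma^{2m} = \sigma^2(1-\sigma^2)^6$ and $M_1/M_2\le 1$, each such inequality reduces to a universal numeric check, but the constants $0.004$, $64$, $0.49$ must be reconciled simultaneously, so I would tune $\eta$ and split Young's inequality with some slack (e.g.\ using $(a+b)^2\le \sigma^{-2m}\cdot\sigma^{2m}(\ldots)^2$ variants) to match all three coefficients exactly.
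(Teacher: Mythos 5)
Your proposal follows essentially the same route as the paper's proof: square the contraction \eqref{ineq:xkplus1} with Young's inequality (the paper takes $\eta_1=\tfrac{1-\sigma^2}{2\sigma^2}$, reserving the $0.01(1-\sigma^2)$ slack you anticipate needing), bound $\|\bd^{k_0}-\bW^\infty\bd^{k_0}\|$ by $2\|\bg^{k_0}\|/M_1$ via \eqref{ineq:dknorm-new}, expand $\|\bg^{k_0}\|^2$ through \eqref{ineq:gknorm} and $\|\nabla F(\ox^{k_0})\|^2\le 2L_1(F(\ox^{k_0})-F(x^*))$, and then verify the coefficients under the step-size condition \eqref{eq-a-c}. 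The only differences are immaterial constants (your $3(a^2+b^2+c^2)$ split versus the paper's $4a^2+2b^2+4c^2$), and your bookkeeping does close under \eqref{eq-a-c} since $\sigma^{2m}\alpha^2L_1^2/M_1^2$ carries the factor $\sigma^2(1-\sigma^2)^6/10^4$ you identify.
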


\begin{proof}
With \eqref{ineq:xkplus1}, we have
\begin{align}\label{eq-diff}
&\|\bx^{{k_0} + 1} - \bW^\infty \bx^{{k_0} + 1} \|^2\nonumber\\
\leq & \sigma^{2m} \big( (1 + \eta_1)\|\bx^{k_0} - \bW^\infty \bx^{k_0}\|^2 \nonumber\\
&+ \left(1 + \frac{1}{\eta_1}\right) \alpha^2 \|\bd^{k_0} - \bW^\infty \bd^{k_0}\|^2 \big) \\
\leq & \frac{\sigma^{2m - 2} (1 + \sigma^2)}{2} \|\bx^{k_0} \!-\! \bW^\infty \bx^{k_0}\|^2\! +\! \frac{8 \sigma^{2m} \alpha^2 }{(1 \!-\! \sigma^2) M_1^2} \|\bg^{k_0}\|^2,\nonumber
\end{align}
where the first inequality holds for any $\eta_1>0$ due to Young's inequality and the second inequality holds by setting $\eta_1= \frac{1 - \sigma^2}{2\sigma^2}$  and substituting \eqref{ineq:dknorm-new}.

Further, according to \eqref{ineq:gknorm}, we have
\begin{align}\label{eq-g}
&\|\bg^k\|^2 \nonumber\\
\leq & 4 \|\bg^k \!-\! \bW^\infty \bg^k\|^2 \!+\! 2 L_1^2 \|\bx^{k} \!-\! \bW^\infty \bx^{k} \|^2 \!+\! 4 n \|\nabla F(\ox^k)\|^2 \nonumber\\
\leq & 4 \|\bg^k - \bW^\infty \bg^k\|^2 + 2 L_1^2 \|\bx^{k} - \bW^\infty \bx^{k} \|^2 \\
&+ 8 L_1 n\big(F(\ox^{k}) - F(x^*)\big)\nonumber
\end{align}
for all $k\ge 0$, where the last inequality holds since
\begin{align}\label{ineq:gradfxk}
\| \nabla F(\ox^{k})\|^2 \leq 2 L_1 \left(F(\ox^{k}) - F(x^*)\right),
\end{align}
whose proof can be found in  \cite[Theorem 2.1.5]{nesterov2003introductory}.
By substituting \eqref{eq-g} into \eqref{eq-diff}, we get
\begin{align}\label{eq-cons-x}
&\|\bx^{{k_0} + 1} - \bW^\infty \bx^{{k_0} + 1} \|^2 \nonumber\\
\leq & { \sigma^{2m - 2}} \left(\frac{1 + \sigma^2}{2} + \frac{16 \sigma^{2} \alpha^2 L_1^2}{(1 - \sigma^2) M_1^2} \right) \|\bx^{k_0} - \bW^\infty \bx^{k_0}\|^2 \nonumber\\
& + \frac{32 \sigma^{2m} \alpha^2 L_1^2}{(1 - \sigma^2) M_1^2} \cdot \frac{1}{L_1^2} \|\bg^{k_0} - \bW^\infty \bg^{k_0}\|^2 \\
&+ \frac{64 \sigma^{2m} \alpha^2 L_1^2}{(1 - \sigma^2) M_1^2} \cdot\frac{n}{L_1}\left(F(\ox^{{k_0}}) - F(x^*)\right).\nonumber
\end{align}
By choosing $\alpha \leq \frac{M_1^2 (1 - \sigma^2)^3}{100 L_1 M_2{\sigma^{m-1}}}$, we have
$$
\begin{aligned}
&\sigma^{2m - 2} \left(\frac{1 + \sigma^2}{2} + \frac{16 \sigma^{2} \alpha^2 L_1^2}{(1 - \sigma^2) M_1^2} \right)\\
\leq& \frac{1 + \sigma^2}{2} + {\sigma^{2m-2}}\cdot\frac{16 \sigma^{2} M_1^4 (1 - \sigma^2)^6 L_1^2}{10^4 L_1^2 M_2^2 {\sigma^{2m-2}} (1 - \sigma^2) M_1^2} \\
\leq& \frac{1 + \sigma^2}{2} + 0.01(1 - \sigma^2) = 1 - 0.49(1 - \sigma^2)= \bJ^{[1]}_{11}
\end{aligned}
$$
and
$$
\begin{aligned}
\frac{32 \sigma^{2m} \alpha^2 L_1^2}{(1 - \sigma^2) M_1^2} &\leq \frac{32 \sigma^{2m} M_1^4 (1 - \sigma^2)^6L_1^2}{10^4 L_1^2 M_2^2 {\sigma^{2m-2}} (1 - \sigma^2) M_1^2}\\
& \leq { 0.004(1 - \sigma^2)^3}= \bJ^{[1]}_{12}
\end{aligned}
$$
and $$\frac{{ 64} {\sigma^{2m}} \alpha^2 L_1^2}{(1 - \sigma^2) M_1^2}= \bJ^{[1]}_{13}.$$
Thus, we get \eqref{eq-cons-x-J} and complete the proof.
\end{proof}

{\bf Step II:} The following lemma bounds the gradient tracking error $	\frac{1}{L_1^2} \|\bg^{{k_0} + 1} - \bW^\infty \bg^{{k_0} + 1} \|^2 $.
\begin{lem}\label{lem-4}
Under the setting of Lemma \ref{lem-3}, we have
\begin{align}\label{eq-g-consensus-J}
\begin{split}
&\frac{1}{L_1^2}	\|\bg^{{k_0} + 1} - \bW^\infty \bg^{{k_0} + 1} \|^2 \\
\leq &  \bJ^{[1]}_{21} \|\bx^{k_0} - \bW^\infty \bx^{k_0}\|^2+\bJ^{[1]}_{22}\cdot \frac{1}{L_1^2} \|\bg^{k_0} - \bW^\infty \bg^{k_0}\|^2 \\
& + \bJ^{[1]}_{23}\cdot \frac{n}{L_1}\left(F(\ox^{{k_0}}) - F(x^*)\right).
\end{split}
\end{align}
\end{lem}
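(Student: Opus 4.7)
The plan is to mirror the strategy used in Step I for Lemma \ref{lem-3}, now starting from the gradient recursion \eqref{ineq:gkplus1} in Lemma \ref{lem-preliminary-x-g} instead of \eqref{ineq:xkplus1}. Squaring \eqref{ineq:gkplus1} and applying Young's inequality with parameter $\eta_2>0$ gives
\begin{equation*}
\|\bg^{k_0+1}-\bW^\infty \bg^{k_0+1}\|^2 \leq \sigma^{2m}(1+\eta_2)\|\bg^{k_0}-\bW^\infty \bg^{k_0}\|^2 + \sigma^{2m}\Bigl(1+\tfrac{1}{\eta_2}\Bigr) L_1^2 \|\bx^{k_0+1}-\bx^{k_0}\|^2 .
\end{equation*}
Choosing $\eta_2=\frac{1-\sigma^2}{2\sigma^2}$ (as in Step I) makes the first coefficient equal to $\sigma^{2m-2}\cdot\frac{1+\sigma^2}{2}$ and the second essentially $\frac{2\sigma^{2m}}{1-\sigma^2}\cdot(1+o(1))$.

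Next, I would bound $\|\bx^{k_0+1}-\bx^{k_0}\|^2$ using \eqref{eq-11} from Lemma \ref{lem:xkplus1-xk}. Applying $(a+b+c)^2\le 3(a^2+b^2+c^2)$ (or a weighted Young's inequality) splits the square into three contributions: one in $\|\bx^{k_0}-\bW^\infty \bx^{k_0}\|^2$, one in $\|\bg^{k_0}-\bW^\infty \bg^{k_0}\|^2$, and one in $n\|\nabla F(\ox^{k_0})\|^2$. I then convert the last term into the optimality gap via \eqref{ineq:gradfxk}, i.e.\ $\|\nabla F(\ox^{k_0})\|^2\le 2L_1(F(\ox^{k_0})-F(x^*))$. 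Dividing both sides by $L_1^2$ puts the inequality into exactly the format required for the second row of $\bJ^{[1]}$.

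The last step is to invoke the step-size condition $\alpha\le\frac{M_1^2(1-\sigma^2)^3}{100 L_1 M_2\,\sigma^{m-1}}$ from \eqref{eq-a-c} and $M_2\ge M_1$ to absorb all $\alpha$-dependent terms into the desired constants. The coefficient of $\|\bx^{k_0}-\bW^\infty \bx^{k_0}\|^2$ picks up the dominant contribution $\frac{2\sigma^{2m}}{1-\sigma^2}\cdot L_1^2\cdot(2+2\alpha L_1/M_1)^2\cdot 3$, which after dividing by $L_1^2$ and using $\sigma<1$ is readily bounded by $\frac{33}{1-\sigma^2}=\bJ^{[1]}_{21}$; the coefficient of $\frac{1}{L_1^2}\|\bg^{k_0}-\bW^\infty \bg^{k_0}\|^2$ is $\sigma^{2m-2}\cdot\frac{1+\sigma^2}{2}$ plus an $O\bigl(\frac{\sigma^{2m}\alpha^2 L_1^2}{(1-\sigma^2)M_1^2}\bigr)$ correction, and the step-size bound makes the latter negligibly small so that the total is $\le 1-0.49(1-\sigma^2)=\bJ^{[1]}_{22}$; and the coefficient of $\frac{n}{L_1}(F(\ox^{k_0})-F(x^*))$ reduces to $\frac{64\sigma^{2m}\alpha^2 L_1^2}{(1-\sigma^2)M_1^2}=\bJ^{[1]}_{23}$, matching the statement.

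The main obstacle is simply bookkeeping: after the two successive applications of Young's inequality, one has to verify that the $\sigma^{2m-2}\cdot\frac{1+\sigma^2}{2}$ term plus the additive $\alpha$-dependent perturbation stays below $1-0.49(1-\sigma^2)$, which is exactly the same numerical tightening performed in the proof of Lemma \ref{lem-3}; the step-size threshold $\alpha\le\frac{M_1^2(1-\sigma^2)^3}{100 L_1 M_2\sigma^{m-1}}$ is tuned precisely so that this verification goes through uniformly in $m$.
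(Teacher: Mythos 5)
Your proposal is correct and follows essentially the same route as the paper: square the recursion \eqref{ineq:gkplus1} with Young's parameter $\eta_2=\frac{1-\sigma^2}{2\sigma^2}$, bound $\|\bx^{k_0+1}-\bx^{k_0}\|^2$ via \eqref{eq-11} and \eqref{ineq:gradfxk}, and absorb the $\alpha$-dependent terms using \eqref{eq-a-c}. The only difference is the trivial choice of splitting for the squared three-term sum (your $3(a^2+b^2+c^2)$ versus the paper's $2a^2+4b^2+4c^2$), and the resulting constants still fit within the entries of $\bJ^{[1]}$.
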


\begin{proof}
With \eqref{ineq:gkplus1}, we have
\begin{align}\label{eq-22}
&\|\bg^{k + 1} - \bW^\infty \bg^{k + 1} \|^2\\ 
\leq & \sigma^{2m}\!\left((1 \!+\! \eta_2)\|\bg^k \!-\! \bW^\infty \bg^k\|^2 \!+\! \left(1 + \frac{1}{\eta_2}\right) L_1^2 \|\bx^{k + 1} \!-\! \bx^k\|^2 \right)\nonumber\\
\leq & \frac{\sigma^{2m - 2} (1 + \sigma^2)}{2} \|\bg^k - \bW^\infty \bg^k\|^2 +  \frac{2 \sigma^{2m} L_1^2}{1 - \sigma^2} \|\bx^{k + 1} - \bx^k\|^2 \nonumber
\end{align}
for all $k\ge 0$, where the first inequality holds for any $\eta_2>0$ due to Young's inequality and the second inequality holds by setting $\eta_2= \frac{1 - \sigma^2}{2\sigma^2}$.

Further, according to \eqref{eq-11}, we have
\begin{align}\label{eq-23}
& \|\bx^{{k_0} + 1} - \bx^{k_0} \|^2 \nonumber\\ \leq & 2 \left( 2 + \frac{2 \alpha L_1}{M_1} \right)^2 \|\bx^{k_0} - \bW^\infty \bx^{k_0}\|^2 \nonumber\\
&+ \frac{16 \alpha^2}{M_1^2} \|\bg^{{k_0}} - \bW^\infty \bg^{{k_0}}\|^2 + \frac{16 \alpha^2 n}{M_1^2} \| \nabla F(\ox^{{k_0}})\|^2 \nonumber\\ 
\leq & 2 \left( 2 + \frac{2 \alpha L_1}{M_1} \right)^2 \|\bx^{k_0} - \bW^\infty \bx^{k_0}\|^2\\
& + \frac{16 \alpha^2}{M_1^2} \|\bg^{{k_0}} - \bW^\infty \bg^{{k_0}}\|^2 + \frac{32 \alpha^2 L_1}{M_1^2} n (F(\ox^{{k_0}}) - F(x^*)),\nonumber
\end{align}
where we substitute \eqref{ineq:gradfxk} in the last inequality.
By substituting \eqref{eq-23} into \eqref{eq-22}, we have
\begin{align}\label{eq-g-consensus}
		&\frac{1}{L_1^2}\|\bg^{{k_0} + 1} - \bW^\infty \bg^{{k_0} + 1} \|^2 \nonumber\\
		\leq & {\sigma^{2m - 2} } \left( \frac{1 + \sigma^2}{2} +  \frac{32 \alpha^2 \sigma^{2} L_1^2}{(1 - \sigma^2) M_1^2} \right) \cdot \frac{1}{L_1^2} \|\bg^{k_0} - \bW^\infty \bg^{k_0}\|^2 \nonumber\\
		& + \frac{2 \sigma^{2m}}{1 - \sigma^2} \cdot 2 \left( 2 + \frac{2 \alpha L_1}{M_1} \right)^2  \|\bx^{k_0} - \bW^\infty \bx^{k_0}\|^2  \\
		&+ \frac{2 \sigma^{2m} }{1 - \sigma^2}\cdot\frac{32 \alpha^2 L_1^2}{M_1^2}\cdot\frac{n}{L_1}\left(F(\ox^{{k_0}}) - F(x^*)\right).\nonumber
\end{align}
By substituting $\alpha \leq \frac{M_1^2 (1 - \sigma^2)^3}{100 L_1 M_2{\sigma^{m-1}} }$ into \eqref{eq-g-consensus}, we have
$$
\begin{aligned}
&\sigma^{2m - 2} \left(\frac{1 + \sigma^2}{2} + \frac{32 \sigma^{2} \alpha^2 L_1^2}{(1 - \sigma^2) M_1^2} \right) \\
\leq&\frac{1 + \sigma^2}{2} + \frac{{\sigma^{2m-2}}32 \sigma^{2} M_1^4 (1 - \sigma^2)^6 L_1^2}{10^4 L_1^2 M_2^2{\sigma^{2m-2}} (1 - \sigma^2) M_1^2}\\
\leq&\frac{1 + \sigma^2}{2} + 0.01(1 - \sigma^2)= 1 - 0.49(1 - \sigma^2)= \bJ^{[1]}_{22}
\end{aligned}
$$
and
$$
\begin{aligned}
&\frac{2 \sigma^{2m}}{1 - \sigma^2} \cdot 2 \left( 2 + \frac{2 \alpha L_1}{M_1} \right)^2\leq \frac{2 \sigma^{2m}}{1 - \sigma^2} \cdot 2 \left( 8 + \frac{8 \alpha^2 L_1^2}{M_1^2} \right)\\
&\leq \frac{32\sigma^{2m} }{1 - \sigma^2}+\frac{32\sigma^{2m}L_1^2}{(1-\sigma^2)M_1^2}\cdot\frac{M_1^4(1-\sigma^2)^6}{10^4L_1^2M_2^2\sigma^{2m-2}}
\leq \frac{{33}}{1 - \sigma^2}= \bJ^{[1]}_{21}
\end{aligned}
$$
and
$$ \frac{2 \sigma^{2m} }{1 - \sigma^2}\cdot\frac{32 \alpha^2 L_1^2}{M_1^2} = \frac{{ 64}{\sigma^{2m}}\alpha^2 L_1^2 }{(1 - \sigma^2) M_1^2}= \bJ^{[1]}_{23}.$$
Thus, we get \eqref{eq-g-consensus-J} and complete the proof.
\end{proof}
{\bf Step III:} The following lemma bounds  the network optimality gap $\frac{n}{L_1} (F(\ox^{k_0 + 1}) - F(x^*))$.
\begin{lem}\label{lem-5}
Under the setting of Lemma \ref{lem-3}, we have
\begin{align}\label{eq-Fx*-J}
\begin{split}
&\frac{n}{L_1}\left(F(\ox^{{k_0} + 1}) - F(x^*)\right)\\
\leq&
\bJ^{[1]}_{31}\|\bx^{k_0} -\bW^\infty \bx_{k_0}\|^2 + \bJ^{[1]}_{32}\cdot \frac{1}{L_1^2}\|\bg^{k_0} - \bW^\infty \bg^{k_0}\|^2 \\
&+\bJ^{[1]}_{33} 	\cdot\frac{n}{L_1}\left(F(\ox^{{k_0}}) - F(x^*)\right).
\end{split}
\end{align}
\end{lem}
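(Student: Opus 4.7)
The plan is to start from the averaged update and combine $L_1$-smoothness of $F$ with a careful decomposition of the aggregate direction $\od^{k_0}$, exploiting the uniform bounds in \eqref{eq-M1M2}.

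First I would take the average of $\bx^{k_0+1} = \bW^m(\bx^{k_0} - \alpha\bd^{k_0})$ over the nodes. Double stochasticity of $\bW^m$ implies $\ox^{k_0+1} = \ox^{k_0} - \alpha\od^{k_0}$. Applying $L_1$-smoothness of $F$ at $\ox^{k_0}$ then yields the descent inequality
\[
F(\ox^{k_0+1}) - F(\ox^{k_0}) \le -\alpha\langle \nabla F(\ox^{k_0}), \od^{k_0}\rangle + \tfrac{L_1\alpha^2}{2}\|\od^{k_0}\|^2.
\]

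Second, I would decompose $\od^{k_0}$ through the CG relation $(H_i^{k_0}+MI_d) d_i^{k_0} = g_i^{k_0} + r_i^{k_0}$. Writing $B_i = H_i^{k_0}+MI_d$ and $\bar B^{-1} = \tfrac{1}{n}\sum_i B_i^{-1}$, the condition \eqref{eq-M1M2} gives $M_2^{-1}I_d \preceq \bar B^{-1}\preceq M_1^{-1}I_d$, and
\[
\od^{k_0} = \bar B^{-1}\nabla F(\ox^{k_0}) + \tfrac{1}{n}\textstyle\sum_i B_i^{-1}\bigl[(g_i^{k_0}-\og^{k_0}) + (\og^{k_0}-\nabla F(\ox^{k_0})) + r_i^{k_0}\bigr].
\]
The three correction pieces are bounded, respectively, by $\tfrac{1}{M_1\sqrt{n}}\|\bg^{k_0}-\bW^\infty\bg^{k_0}\|$; by $\tfrac{L_1}{M_1\sqrt{n}}\|\bx^{k_0}-\bW^\infty\bx^{k_0}\|$ using \eqref{eq-barg-grad}; and by $\tfrac{c_{k_0}}{M_1\sqrt{n}}\|\bg^{k_0}\|$ using Fact \ref{fact-CG}.

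Third, the principal term satisfies $-\alpha\langle\nabla F(\ox^{k_0}),\bar B^{-1}\nabla F(\ox^{k_0})\rangle \le -\tfrac{\alpha}{M_2}\|\nabla F(\ox^{k_0})\|^2$. Strong convexity of $F$ gives the PL inequality $\|\nabla F(\ox^{k_0})\|^2 \ge 2\mu(F(\ox^{k_0})-F(x^*))$, which I would split into two halves. One half becomes the main contraction $-\tfrac{\mu\alpha}{M_2}(F(\ox^{k_0})-F(x^*))$, matching $\bJ^{[1]}_{33}=1-\tfrac{\mu\alpha}{M_2}$. The other half, combined with Young's inequality on each inner product $-\alpha\langle\nabla F(\ox^{k_0}),\cdot\rangle$ arising from the corrections, absorbs the cross terms into multiples of $\|\bx^{k_0}-\bW^\infty\bx^{k_0}\|^2$ and $\|\bg^{k_0}-\bW^\infty\bg^{k_0}\|^2$. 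For the quadratic remainder $\tfrac{L_1\alpha^2}{2}\|\od^{k_0}\|^2$, I would use $\|\od^{k_0}\|^2 \le \tfrac{1}{n}\|\bd^{k_0}\|^2$ and then \eqref{ineq:dknorm-new} together with \eqref{eq-g} to express everything in the three quantities making up $\bq_1^{k_0}$.

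The main obstacle is the bookkeeping: the raw bound will carry $\alpha$-dependent coefficients such as $\tfrac{\alpha L_1}{M_1^2}$, $\tfrac{\alpha^2 L_1^2}{M_1^2}$, and CG-error factors proportional to $c_k^2$. I would then substitute the step-size and CG bounds from \eqref{eq-a-c}, namely $\alpha \le \tfrac{M_1^2(1-\sigma^2)^3}{100 L_1 M_2\sigma^{m-1}}$ and $c_k \le \tfrac{M_1}{4 M_2\sqrt{2\kappa_F}}$, to collapse these coefficients to the claimed entries $\bJ^{[1]}_{31}=\tfrac{(1-\sigma^2)^3}{20\sigma^{m-1}}$ and $\bJ^{[1]}_{32}=\tfrac{(1-\sigma^2)^3}{80\sigma^{m-1}}$. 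Calibrating the Young's-inequality weights precisely—so that the absorbed errors do not consume more than half of the PL-generated contraction, while the remaining cross-term masses match the numerical constants $1/20$ and $1/80$—will require the most care.
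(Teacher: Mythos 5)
Your outline tracks the paper's proof of Lemma \ref{lem-5} essentially step for step: average the update to get $\ox^{k_0+1}=\ox^{k_0}-\alpha\od^{k_0}$, apply $L_1$-smoothness of $F$, split $\od^{k_0}$ into the principal term $\oB^{k_0}\nabla F(\ox^{k_0})$ plus corrections from the gradient disagreement $g_i^{k_0}-\og^{k_0}$, the gap $\og^{k_0}-\nabla F(\ox^{k_0})$, and the CG residual, then combine Young's inequality, the strong-convexity bound $\|\nabla F(\ox^{k_0})\|^2\ge 2\mu\left(F(\ox^{k_0})-F(x^*)\right)$, the bound \eqref{eq-g-2} on $\|\bg^{k_0}\|^2$, and the parameter choices \eqref{eq-a-c}. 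This is exactly the paper's route (its \eqref{eq-f-decrease-1}--\eqref{eq-new-02}).

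The one device you are missing is how the paper handles the gradient-disagreement correction. Because $\sum_{i}(g_i^{k_0}-\og^{k_0})=0$, the paper rewrites $\frac{1}{n}\sum_i B_i^{k_0}(g_i^{k_0}-\og^{k_0})=\frac{1}{n}\sum_i\left(B_i^{k_0}-\frac{1}{2M_1}I_d\right)(g_i^{k_0}-\og^{k_0})$ and uses $\|B_i^{k_0}-\frac{1}{2M_1}I_d\|\le\frac{1}{2M_1}$, which is a factor of $2$ better (hence $4$ after squaring) than your direct bound $\frac{1}{M_1\sqrt n}\|\bg^{k_0}-\bW^\infty\bg^{k_0}\|$. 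This is not cosmetic: with your bound, the coefficient of $\frac{1}{L_1^2}\|\bg^{k_0}-\bW^\infty\bg^{k_0}\|^2$ works out to roughly $\frac{4L_1 M_2\alpha}{M_1^2}$ rather than $\frac{L_1 M_2\alpha}{M_1^2}$, which under \eqref{eq-a-c} gives about $\frac{(1-\sigma^2)^3}{24\sigma^{m-1}}$ instead of the stated $\bJ^{[1]}_{32}=\frac{(1-\sigma^2)^3}{80\sigma^{m-1}}$, and one can check that no reallocation of the Young weights recovers the deficit without violating the budgets already committed to $\bJ^{[1]}_{31}$ and $\bJ^{[1]}_{33}$. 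So as written your argument yields the lemma only with a worse $(3,2)$ entry; inserting the mean-zero centering trick (the paper's \eqref{eq-d-Bg}) closes the gap and reproduces the stated constants. The rest of your bookkeeping plan, including splitting $-\frac{\alpha}{M_2}\|\nabla F(\ox^{k_0})\|^2$ into a contraction half and an absorption half, is consistent with how the paper reaches $\bJ^{[1]}_{33}=1-\frac{\mu\alpha}{M_2}$.
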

\begin{proof}
Let us denote $B_i^k = (H^k_i + M I_d)^{-1}$ and $\oB^k = \frac{1}{n} \sum_{i = 1}^n B^k_i$. Since $\ox^{k + 1} = \ox^{k} - \alpha \od^k $, under Assumption \ref{asp:lipschitz-continuous} and \eqref{eq-M1M2}, we have
\begin{align}\label{eq-f-decrease-1}
&F(\ox^{{k_0} + 1}) \nonumber\\
\leq & F(\ox^{k_0}) - \alpha \left\langle \nabla F(\ox^{k_0}), \od^{k_0} \right\rangle + \frac{L_1 \alpha^2}{2} \|\od^{k_0}\|^2 \nonumber\\
\leq & F(\ox^{k_0}) - \alpha \left\langle \nabla F(\ox^{k_0}), \oB^{k_0} \nabla F(\ox^{k_0}) \right\rangle \nonumber\\
&- \alpha \left\langle \nabla F(\ox^{k_0}), \od^{k_0} - \oB^{k_0} \nabla F(\ox^{k_0}) \right\rangle \\
&+ L_1 \alpha^2 \left(\|\oB^{k_0} \nabla F(\ox^{k_0})\|^2 + \|\od^{k_0} - \oB^{k_0} \nabla F(\ox^{k_0})\|^2 \right)\nonumber\\
\leq & F(\ox^{k_0}) - \frac{\alpha}{M_2} \| \nabla F(\ox^{k_0})\|^2 \nonumber\\
&+ \alpha\left(\frac{1}{4 M_2} \| \nabla F(\ox^{k_0})\|^2
+ M_2 \|\od^{k_0} - \oB^{k_0} \nabla F(\ox^{k_0})\|^2\right) \nonumber\\
&+  L_1 \alpha^2 \left( \frac{1}{M_1^2} \| \nabla F(\ox^{k_0})\|^2 + \|\od^{k_0} - \oB^{k_0} \nabla F(\ox^{k_0})\|^2 \right)\nonumber\\
= & F(\ox^{k_0}) - \left( \frac{3 \alpha}{4 M_2} - \frac{L_1 \alpha^2}{M_1^2} \right) \| \nabla F(\ox^{k_0})\|^2\nonumber\\
& + (M_2 \alpha + L_1 \alpha^2) \|\od^{k_0} - \oB^{k_0} \nabla F(\ox^{k_0})\|^2,\nonumber
\end{align}
where we use  $\frac{1}{M_2} \leq \|\oB^{k_0}\|\leq \frac{1}{M_1}$ in the last inequality.
Next, we bound  $\|\od^{k_0} - \oB^{k_0} \nabla F(\ox^{k_0})\|^2 $. With  \eqref{ineq:inexact_inverse}, we have
\begin{align*}
\begin{split}
\od^k = & \frac{1}{n} \sum_{i = 1}^n B^k_i (g_i^k + r_i^k) \\
= &\frac{1}{n} \sum_{i = 1}^n B^k_i (g_i^k - \og^k) + \oB^k \og^k + \frac{1}{n} \sum_{i = 1}^n B^k_i r^k_i \\
= & \frac{1}{n} \sum_{i = 1}^n (B^k_i - \frac{1}{2 M_1} I_d) (g_i^k - \og^k) + \oB^k\og^k + \frac{1}{n} \sum_{i = 1}^n B^k_i r^k_i
\end{split}
\end{align*}
for all $k$, which implies that
\begin{align}\label{eq-d-Bg}
&\| \od^{k_0} - \oB^{k_0} \nabla F(\ox^{k_0}) \|^2\nonumber\\
= & \Big\|\frac{1}{n} \sum_{i = 1}^n (B^{k_0}_i - \frac{1}{2 M_1} I_d) (g_i^{k_0} - \og^{k_0}) \nonumber\\
&\quad+ \frac{1}{n} \sum_{i = 1}^n B^{k_0}_i r^{k_0}_i+ \oB^{k_0} (\og^{k_0} - \nabla F(\ox^{k_0})) \Big\|^2\nonumber\\
\leq &  4 \Big\| \frac{1}{n} \sum_{i = 1}^n (B^{k_0}_i - \frac{1}{2 M_1} I_d) (g_i^{k_0} - \og^{k_0})\Big\|^2\\
& + 4 \| \oB^{k_0} (\og^{k_0} - \nabla F(\ox^{k_0}))\|^2 +  2 \Big\| \frac{1}{n} \sum_{i = 1}^n B^{k_0}_i r^{k_0}_i\Big\|^2 \nonumber\\
\leq &  \frac{4}{n} \sum_{i = 1}^n \left \| \left(B^{k_0}_i - \frac{1}{2 M_1} I_d\right) (g_i^{k_0} - \og^{k_0})\right\|^2 \nonumber\\
&+ \frac{4}{ M_1^2} \|\og^{k_0} - \nabla F(\ox^{k_0})\|^2 + \frac{2}{n M_1^2} \sum_{i = 1}^n  \| r^{k_0}_i\|^2 \nonumber\\
\leq & \!\frac{1}{n M_1^2}( \|\bg^{k_0}\!\! -\!\! \bW^\infty \bg^{k_0}\|^2 \!+\! 4 L^2_1\|\bx^{k_0} \!-\!\bW^\infty \bx_{k_0}\|^2 \!+\! 2 c_{k_0}^2 \|\bg^{k_0}\|^2 ),\nonumber
\end{align}
where we use  $\|\oB^{k_0}\| \leq \frac{1}{M_1}$, $\|B^{k_0}_i - \frac{1}{2 M_1} I_d\| \leq \frac{1}{2 M_1}$, and \eqref{eq-barg-grad}.
By substituting \eqref{eq-d-Bg} into \eqref{eq-f-decrease-1}, we have
\begin{align}\label{eq-f-f*}
\begin{split}
&F(\ox^{{k_0} + 1}) - F(x^*) \\
\leq & \left(1 - 2\mu\left( \frac{3 \alpha}{4 M_2} - \frac{L_1 \alpha^2}{M_1^2} \right) \right) \left(F(\ox^{{k_0}}) - F(x^*)\right) \\
+ &\frac{M_2 \alpha + L_1 \alpha^2}{n M_1^2}\left(\|\bg^{k_0} - \bW^\infty \bg^{k_0}\|^2 \right.\\
& \left.+ 4 L^2_1\|\bx^{k_0} -\bW^\infty \bx_{k_0}\|^2+2 c_{k_0}^2 \|\bg^{k_0}\|^2\right),
\end{split}
\end{align}
where we use the fact that  $\| \nabla F(\ox^{k})\|^2 \geq 2 \mu (F(\ox^{k}) - F(x^*))$ under Assumption \ref{ass-strongly-cvx}. Further, according to \eqref{ineq:gknorm}, we have
\begin{align}\label{eq-g-2}
&\|\bg^{k}\|^2 \nonumber\\
\leq& ( \|\bg^{k} - \bW^\infty \bg^{k}\| + L_1 \|\bx^{k} - \bW^\infty \bx^{k}\|  + \sqrt{n} \| \nabla F(\ox^{k})\|)^2 \nonumber\\
\leq&  3\|\bg^{k} - \bW^\infty \bg^{k}\|^2 + 3L_1^2 \|\bx^{k} - \bW^\infty \bx^{k}\|  + 3n \| \nabla F(\ox^{k})\|^2 \nonumber\\
\leq&  3\|\bg^{k} - \bW^\infty \bg^{k}\|^2 + 3L_1^2 \|\bx^{k} - \bW^\infty \bx^{k}\| \\
& + 6n L_1(F(\ox^{k}) - F(x^*))\nonumber
\end{align}
for all $k\ge 0$.
Substituting \eqref{eq-g-2} into \eqref{eq-f-f*}, we have
\begin{align}\label{eq-new-02}
&\frac{n}{L_1}\left(F(\ox^{{k_0} + 1}) - F(x^*)\right)\nonumber\\
\leq & \left(1 \!-\! 2\mu\left( \frac{3 \alpha}{4 M_2} \!-\!\frac{L_1 \alpha^2}{M_1^2} \right)\!+\!\frac{12c_{k_0}^2L_1 (M_2 \alpha + L_1 \alpha^2)}{M_1^2} \right)\\
&\!\cdot\!	\frac{n}{L_1}(F(\ox^{{k_0}})\!-\!F(x^*))\nonumber \\
&\quad + \frac{L_1(M_2 \alpha + L_1 \alpha^2)}{M_1^2}  (1 + 6c_{k_0}^2)\cdot\frac{1}{L_1^2}\|\bg^{k_0} - \bW^\infty \bg^{k_0}\|^2 \nonumber\\
&\quad+\frac{(M_2 \alpha + L_1 \alpha^2)L_1}{ M_1^2} (4 + 6c_{k_0}^2)\|\bx^{k_0} -\bW^\infty \bx_{k_0}\|^2.\nonumber
\end{align}
With $\alpha \leq \min\left\{ \frac{M_1^2 (1 - \sigma^2)^3}{100 L_1 M_2{\sigma^{m-1}}},\frac{M_1^2}{200L_1M_2}\right\}$, we have $M_2\alpha+L_1\alpha^2\le 1.01M_2\alpha$  and $c_k \leq \frac{M_1}{4M_2 \sqrt{2\kappa_F}}$. By substituting these inequalities into \eqref{eq-new-02}, we have
$$
\begin{aligned}
&1 - 2\mu\left( \frac{3 \alpha}{4 M_2} - \frac{L_1 \alpha^2}{M_1^2} \right)  + \frac{12c_k^2 L_1 (M_2 \alpha + L_1 \alpha^2)}{M_1^2} \\
\le&1 - \frac{3 \mu\alpha}{2 M_2} + \frac{2\mu L_1 \alpha}{M_1^2}\cdot\frac{M_1}{200L_1}\cdot\frac{M_1}{M_2} + \frac{12c_k^2 L_1 (M_2 \alpha + L_1 \alpha^2)}{M_1^2}\\
\leq& 1 - \frac{2.98\mu \alpha}{2 M_2} + \frac{12L_1 \cdot 1.01 M_2 M_1^2\alpha}{32 M_2^2 M_1^2 L_1/\mu} \\
\leq& 1 - \frac{\mu \alpha}{M_2} = \bJ^{[1]}_{33}
\end{aligned}
$$
and
$$
\begin{aligned}
&\frac{L_1(M_2 \alpha + L_1 \alpha^2)}{M_1^2} (1 + 6c_k^2) \leq \frac{1.01 M_2 \alpha L_1}{ M_1^2} \cdot \left(1+ \frac{6}{32}\right) \\
&\leq \frac{5 M_2 \alpha L_1}{ 4M_1^2} \leq \frac{(1 -\sigma^2)^3}{80\sigma^{m-1} }= \bJ^{[1]}_{32}
\end{aligned}
$$
and
$$
\begin{aligned}
&\frac{(M_2 \alpha + L_1 \alpha^2)L_1}{ M_1^2} (4 + 6c_k^2) \leq \frac{1.01 M_2 \alpha L_1}{M_1^2} \cdot \left(4+ \frac{6}{32}\right) \\
&\leq \frac{5 M_2 \alpha L_1}{ n M_1^2} \leq \frac{(1 -\sigma^2)^3}{20 \sigma^{m-1}}= \bJ^{[1]}_{31}.
\end{aligned}
$$
Thus, we get \eqref{eq-Fx*-J} and complete the proof.
\end{proof}

By combining  Lemmas \ref{lem-3}--\ref{lem-5}, we get \eqref{eq-rate-1}.

Next, we prove \eqref{eq-rate-u} from \eqref{eq-rate-1}. With the parameters satisfying \eqref{eq-a-c}, it is easy to check that
\begin{align*}
\hspace{-1mm}\left(1, \frac{(1 \!-\! \sigma^2)^2}{50}, 2\sigma^{m-1} \right) \bJ^{[1]}
\!\leq\! \left(1 \!-\! \frac{\mu \alpha}{2 M_2} \right) \left(1, \frac{(1 \!-\! \sigma^2)^2}{50}, 2\sigma^{m-1} \right). 
\end{align*}
Thus, by multiplying $\left(1, \frac{(1 - \sigma^2)^2}{50}, 2\sigma^{m-1} \right) $ on both sides of \eqref{eq-rate-1}, we get \eqref{eq-rate-u} and complete the proof of Proposition \ref{prop-1}.
\end{proof}
\subsection{Proof of Proposition \ref{prop-2}}\label{prf-theom-2}

The proof of \eqref{eq-compress-linear} consists of three steps. We are going to  bound the compression error $\|\bE^{k + 1} \|_F$, the difference  $\|\bH^{k + 1} - \tilde{\bH}^{k + 1} \|_F$, and the Hessian tracking error $\|\bH^{k + 1} - \bW^\infty \bH^{k + 1}\|_F$ in Steps I, II, and III, respectively.

\begin{proof}
{\bf Step I:} The following lemma establishes a recursion for the compression error $\|\bE^{k + 1}\|_F$.
\begin{lem}\label{lem:recur-ek}
Under Assumption \ref{asp:compression}, for all $k\ge 0$, we have
\begin{align}
\|\bE^{k + 1}\|_F \leq (1 - \delta)\| \bE^k \|_F + (1 - \delta) \|\bH^k -\tilde{\bH}^k\|_F.
\end{align}
\end{lem}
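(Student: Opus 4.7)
The plan is to apply the contractive compression assumption directly to the $\bE^{k+1}$ update and then invoke the triangle inequality. Recall the update rule
\[
\bE^{k + 1} = \bE^k + \bH^k -\tilde{\bH}^k - \mathcal{Q} (\bE^k + \bH^k -\tilde{\bH}^k),
\]
so if I set $\bA^k \triangleq \bE^k + \bH^k - \tilde{\bH}^k$, then $\bE^{k+1} = \bA^k - \mathcal{Q}(\bA^k)$ and the task reduces to bounding $\|\bA^k - \mathcal{Q}(\bA^k)\|_F$.

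The first step is to verify that Assumption~\ref{asp:compression} lifts from a single $d\times d$ block to the aggregated matrix in $\mathbb{R}^{nd\times d}$. Since $\mathcal{Q}$ acts block-wise, $\mathcal{Q}(\bA) - \bA = [\mathcal{Q}(A_1) - A_1;\ldots;\mathcal{Q}(A_n) - A_n]$, so
\[
\|\mathcal{Q}(\bA)-\bA\|_F^2 = \sum_{i=1}^n \|\mathcal{Q}(A_i)-A_i\|_F^2 \le (1-\delta)^2 \sum_{i=1}^n \|A_i\|_F^2 = (1-\delta)^2 \|\bA\|_F^2,
\]
which immediately yields $\|\mathcal{Q}(\bA) - \bA\|_F \le (1-\delta)\|\bA\|_F$ for any aggregated matrix $\bA$. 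Applying this to $\bA = \bA^k$ gives $\|\bE^{k+1}\|_F \le (1-\delta)\|\bE^k + \bH^k - \tilde{\bH}^k\|_F$.

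The second step is a single invocation of the triangle inequality,
\[
\|\bE^k + \bH^k - \tilde{\bH}^k\|_F \le \|\bE^k\|_F + \|\bH^k - \tilde{\bH}^k\|_F,
\]
which combined with the previous bound gives the claimed recursion. There is no real obstacle here; the only subtlety worth calling out is making sure the block-wise convention for $\mathcal{Q}$ (introduced after Algorithm~\ref{alg-1}) is used to transfer Assumption~\ref{asp:compression} from $d\times d$ matrices to the aggregated $nd\times d$ matrix in Frobenius norm, and that is handled by the Pythagorean identity above.
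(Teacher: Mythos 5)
Your proof is correct and follows essentially the same route as the paper's: apply the contractive compression bound to the update $\bE^{k+1} = \bA^k - \mathcal{Q}(\bA^k)$ and then use the triangle inequality. Your explicit verification that Assumption~\ref{asp:compression} lifts to the block-wise operator on the aggregated $nd\times d$ matrix via the Pythagorean identity is a small but welcome addition that the paper leaves implicit.
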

\begin{proof}
According to Assumption \ref{asp:compression}, we have
\begin{align}\label{eq-expectation-E}
\begin{aligned}
\|\bE^{k + 1}\|_F &\leq (1 - \delta) \| \bE^k + \bH^k -\tilde{\bH}^k\|_F\\
&\le  (1 - \delta) \| \bE^k \|_F + (1 - \delta) \|\bH^k -\tilde{\bH}^k\|_F.
\end{aligned}
\end{align}
This completes the proof.
\end{proof}

{\bf Step II:} The following lemma establish a recursion of the difference $\|\bH^{k + 1} - \tilde{\bH}^{k + 1} \|_F$.
\begin{lem}\label{lem-H-h}
Under Assumptions \ref{asm-W},  \ref{asp:lipschitz-continuous}, and \ref{asp:compression}, for all $k \ge 0$, we have
\begin{align}\label{eq-expect-H-h}
\begin{split}
&\|\bH^{k + 1} - \tilde{\bH}^{k + 1} \|_F\\
\leq&\left(1 -  \delta + 2 \gamma (1 - \delta) \right) \|\bH^k -\tilde{\bH}^k\|_F + 4 \gamma \|\bE^k\|_F\\
& + 2 \gamma \|\bH^k - \bW^\infty \bH^k\|_F + L_2 \|\bx^{k + 1} - \bx^k\|.
\end{split}
\end{align}
\end{lem}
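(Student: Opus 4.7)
The plan is to subtract the two update rules for $\bH^{k+1}$ and $\tilde{\bH}^{k+1}$, apply the contractive property of $\mathcal{Q}$ to one piece, and handle the mixing term $(I_{nd}-\bW)\hat{\bH}^k$ via a convenient identity linking $\hat{\bH}^k$ to $\bH^k$ and the compression errors $\bE^k,\bE^{k+1}$.

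First, subtracting the updates
$\bH^{k+1}=\bH^k-\gamma(I_{nd}-\bW)\hat{\bH}^k+\nabla^2 f(\bx^{k+1})-\nabla^2 f(\bx^k)$ and
$\tilde{\bH}^{k+1}=\tilde{\bH}^k+\mathcal{Q}(\bH^k-\tilde{\bH}^k)$ yields
\[
\bH^{k+1}-\tilde{\bH}^{k+1}
=\bigl[(\bH^k-\tilde{\bH}^k)-\mathcal{Q}(\bH^k-\tilde{\bH}^k)\bigr]
-\gamma(I_{nd}-\bW)\hat{\bH}^k+\nabla^2 f(\bx^{k+1})-\nabla^2 f(\bx^k).
\]
Taking $\|\cdot\|_F$ and applying the triangle inequality, Assumption \ref{asp:compression} immediately bounds the first bracket by $(1-\delta)\|\bH^k-\tilde{\bH}^k\|_F$, and Assumption \ref{asp:lipschitz-continuous} (applied block-wise to the aggregated Hessians) bounds the last difference by $L_2\|\bx^{k+1}-\bx^k\|$.

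The main step is controlling $\gamma\|(I_{nd}-\bW)\hat{\bH}^k\|_F$. The key observation is the algebraic identity
\[
\hat{\bH}^k=\tilde{\bH}^k+\mathcal{Q}(\bE^k+\bH^k-\tilde{\bH}^k)
=\bH^k+\bE^k-\bE^{k+1},
\]
which one verifies by plugging in the defining relation $\bE^{k+1}=\bE^k+\bH^k-\tilde{\bH}^k-\mathcal{Q}(\bE^k+\bH^k-\tilde{\bH}^k)$. Since $(I_{nd}-\bW)\bW^\infty=0$ under Assumption \ref{asm-W}, we have $\|(I_{nd}-\bW)\bH^k\|_F=\|(I_{nd}-\bW)(\bH^k-\bW^\infty\bH^k)\|_F\leq 2\|\bH^k-\bW^\infty\bH^k\|_F$, using $\|I_{nd}-\bW\|\leq 2$. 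Similarly $\|(I_{nd}-\bW)(\bE^k-\bE^{k+1})\|_F\leq 2\|\bE^k\|_F+2\|\bE^{k+1}\|_F$. Invoking Lemma \ref{lem:recur-ek} to estimate $\|\bE^{k+1}\|_F\leq(1-\delta)\|\bE^k\|_F+(1-\delta)\|\bH^k-\tilde{\bH}^k\|_F$ and collecting terms,
\[
\gamma\|(I_{nd}-\bW)\hat{\bH}^k\|_F
\leq 2\gamma\|\bH^k-\bW^\infty\bH^k\|_F+(4-2\delta)\gamma\|\bE^k\|_F+2\gamma(1-\delta)\|\bH^k-\tilde{\bH}^k\|_F.
\]
Adding this to the $(1-\delta)\|\bH^k-\tilde{\bH}^k\|_F$ contribution and bounding $4-2\delta\leq 4$ gives exactly \eqref{eq-expect-H-h}.

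The only non-routine ingredient is the identity $\hat{\bH}^k=\bH^k+\bE^k-\bE^{k+1}$, which I expect to be the conceptual crux of the proof because it both explains why error feedback actually compensates the compression noise and converts a term depending on the somewhat opaque $\hat{\bH}^k$ into quantities appearing in the Lyapunov vector $\bq_2^k$. Everything else is a careful bookkeeping of constants.
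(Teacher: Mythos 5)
Your proposal is correct and follows essentially the same route as the paper's proof: the same decomposition of $\bH^{k+1}-\tilde{\bH}^{k+1}$, the same key identity $\hat{\bH}^k=\bH^k+\bE^k-\bE^{k+1}$ to handle the mixing term, and the same substitution of the recursion for $\|\bE^{k+1}\|_F$ to collect the $4\gamma\|\bE^k\|_F$ and $2\gamma(1-\delta)\|\bH^k-\tilde{\bH}^k\|_F$ contributions. The constants all check out, so nothing further is needed.
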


\begin{proof}
According to  Algorithm \ref{alg-1}, we have
\begin{align}\label{eq-30}
\begin{aligned}
&\bH^{k + 1} - \tilde{\bH}^{k + 1} \\
=& \bH^k -\tilde{\bH}^k -  \mathcal{Q} (\bH^k -\tilde{\bH}^k) \\
&- \gamma (I_{nd} - \bW) \hat{\bH}^{k} + \nabla^2 f(\bx^{k + 1}) - \nabla^2 f(\bx^k).
\end{aligned}
\end{align}
Next, we  bound the right-hand side of \eqref{eq-30}. First, according to Assumption \ref{asp:compression}, we have
\begin{align}\label{eq-31}
\begin{split}
\left\|\bH^k -\tilde{\bH}^k -  \mathcal{Q} (\bH^k -\tilde{\bH}^k) \right\|_F \le (1 -  \delta) \|\bH^k -\tilde{\bH}^k\|_F.
\end{split}
\end{align}
Second, according to  Algorithm \ref{alg-1},  we have $$\hat{\bH}^{k} = \bH^k + \bE^k - \bE^{k + 1},$$
which implies that
\begin{align}\label{eq-32}
\begin{split}
&\|(I_{nd} - \bW) \hat{\bH}^{k}\|_F \\
\leq & \|(I_{nd} - \bW) \bH^k\|_F + \|(I_{nd} - \bW) (\bE^k - \bE^{k + 1})\|_F \\		
\leq & 2 \|\bH^k - \bW^\infty \bH^k\|_F + 2 \|\bE^k\|_F + 2\|\bE^{k + 1}\|_F.
\end{split}	
\end{align}
Finally, combining inequalities \eqref{eq-30}--\eqref{eq-32} and using   $\|\nabla^2 f(\bx^{k + 1}) - \nabla^2 f(\bx^k)\| \leq L_2 \|\bx^{k + 1} - \bx^k\|$, we have
\begin{align}\label{eq-H-h}
\begin{split}
& \|\bH^{k + 1} - \tilde{\bH}^{k + 1}\|_F\\
\leq &(1 -  \delta)\|\bH^k -\tilde{\bH}^k\|_F + 2 \gamma \|\bH^k - \bW^\infty \bH^k\|_F \\
&+ 2 \gamma \|\bE^k\|_F+ 2 \gamma \|\bE^{k + 1}\|_F + L_2 \|\bx^{k + 1} - \bx^k\| \\
\leq &\left(1 -  \delta + 2 \gamma (1 - \delta) \right)\|\bH^k -\tilde{\bH}^k\|_F + 4 \gamma \|\bE^k\|_F \\
&+2 \gamma \|\bH^k - \bW^\infty \bH^k\|_F + L_2 \|\bx^{k + 1} - \bx^k\|,
\end{split}
\end{align}
where we use \eqref{eq-expectation-E} in the last inequality. This gives \eqref{eq-expect-H-h} and completes the proof.
\end{proof}

{\bf Step III:} The following lemma bounds the Hessian tracking error  $\|\bH^{k + 1} - \bW^\infty \bH^{k + 1} \|_F$.
\begin{lem}\label{lem-H-cons}
Under Assumptions \ref{asm-W},  \ref{asp:lipschitz-continuous}, and \ref{asp:compression}, if $\gamma<1$, then for all $k$, we have
\begin{align}\label{eq-lem8}
\begin{split}
&\|\bH^{k + 1} - \bW^\infty \bH^{k + 1} \|_F \\
\leq& (1 - \gamma(1 - \sigma))\|\bH^k - \bW^\infty \bH^k\|_F + 4 \gamma \|\bE^k\|_F \\
&+ 2 \gamma (1 - \delta) \|\bH^k -\tilde{\bH}^k\|_F + L_2 \|\bx^{k + 1} - \bx^k\|.
\end{split}
\end{align}
\end{lem}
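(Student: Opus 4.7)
The plan is to rewrite $\bH^{k+1} - \bW^\infty \bH^{k+1}$ as a sum of three pieces---a consensus-contraction piece, a compression-error piece, and a Hessian-drift piece---and then bound each piece separately. Starting from the update
\[
\bH^{k+1} = \bH^k - \gamma (I_{nd} - \bW) \hat{\bH}^k + \nabla^2 f(\bx^{k+1}) - \nabla^2 f(\bx^k),
\]
I multiply by $I_{nd} - \bW^\infty$ on the left. Using the doubly-stochastic identity $\bW^\infty \bW = \bW^\infty$, which gives $(I_{nd} - \bW^\infty)(I_{nd} - \bW) = I_{nd} - \bW$, together with $(I_{nd}-\bW)\bW^\infty = 0$, the mixing term simplifies to $-\gamma (I_{nd} - \bW)(\bH^k - \bW^\infty \bH^k) - \gamma (I_{nd} - \bW)(\hat{\bH}^k - \bH^k)$. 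Collecting like terms yields
\[
\bH^{k+1} - \bW^\infty \bH^{k+1} = \bigl[(1-\gamma)I_{nd} + \gamma \bW\bigr](\bH^k - \bW^\infty \bH^k) - \gamma (I_{nd} - \bW)(\hat{\bH}^k - \bH^k) + (I_{nd} - \bW^\infty)[\nabla^2 f(\bx^{k+1}) - \nabla^2 f(\bx^k)].
\]

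For the first piece, I use the identity $\bW(\bH^k - \bW^\infty \bH^k) = (\bW - \bW^\infty)(\bH^k - \bW^\infty \bH^k)$ (again from $\bW\bW^\infty = \bW^\infty$), so that the triangle inequality followed by $\|\bW - \bW^\infty\| \le \sigma$ and the nonnegativity $1-\gamma \ge 0$ (guaranteed by $\gamma < 1$) gives the contraction factor $1 - \gamma(1-\sigma)$ in Frobenius norm. For the third piece, I bound $\|I_{nd} - \bW^\infty\| \le 1$ (since $\bW^\infty$ is an orthogonal projection) and invoke the Hessian Lipschitz assumption as in the derivation of \eqref{eq-H-h}.

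The compression piece is the one that couples into $\bE^k$ and $\bH^k - \tilde{\bH}^k$. The key observation is the telescoping identity implied by the compression procedure: $\hat{\bH}^k = \tilde{\bH}^k + \mathcal{Q}(\bE^k + \bH^k - \tilde{\bH}^k)$ combined with $\bE^{k+1} = \bE^k + \bH^k - \tilde{\bH}^k - \mathcal{Q}(\bE^k + \bH^k - \tilde{\bH}^k)$ gives the clean formula $\hat{\bH}^k - \bH^k = \bE^k - \bE^{k+1}$. Then $\|(I_{nd}-\bW)(\hat{\bH}^k - \bH^k)\|_F \le 2(\|\bE^k\|_F + \|\bE^{k+1}\|_F)$, and feeding in Lemma \ref{lem:recur-ek} converts $\|\bE^{k+1}\|_F$ into a combination of $\|\bE^k\|_F$ and $\|\bH^k - \tilde{\bH}^k\|_F$, producing coefficients $2\gamma(2-\delta) \le 4\gamma$ and $2\gamma(1-\delta)$ as required.

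The main obstacle I expect is getting the algebra of the first step exactly right, i.e., using the projection identities $\bW^\infty(I_{nd}-\bW) = 0$ and $\bW\bW^\infty = \bW^\infty$ to rearrange the update into a form in which $(1-\gamma)I_{nd} + \gamma \bW$ appears acting on $\bH^k - \bW^\infty \bH^k$; once this rearrangement is in place, the three bounds are straightforward from Assumptions \ref{asm-W}, \ref{asp:lipschitz-continuous}, \ref{asp:compression} and the already-established Lemma \ref{lem:recur-ek}, and summing them yields \eqref{eq-lem8}.
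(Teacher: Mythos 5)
Your proof is correct and follows essentially the same route as the paper: both multiply the update by $I_{nd}-\bW^\infty$, use the identity $\hat{\bH}^k = \bH^k + \bE^k - \bE^{k+1}$ together with Lemma \ref{lem:recur-ek} to absorb the compression term into $4\gamma\|\bE^k\|_F + 2\gamma(1-\delta)\|\bH^k-\tilde{\bH}^k\|_F$, and extract the contraction factor $1-\gamma(1-\sigma)$ from $(1-\gamma)I_{nd}+\gamma\bW$ acting on the consensus error. Your rearrangement into $[(1-\gamma)I_{nd}+\gamma\bW](\bH^k-\bW^\infty\bH^k)$ is just an equivalent rewriting of the paper's split $(1-\gamma)(I_{nd}-\bW^\infty)\bH^k + \gamma(\bW-\bW^\infty)\bH^k$.
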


\begin{proof}
According to $\bH^{k + 1} = \bH^k - \gamma (I_{nd} - \bW) \hat{\bH}^{k} + \nabla^2 f(\bx^{k + 1}) - \nabla^2 f(\bx^k)$, we have
\begin{align}\label{eq-34}
\begin{aligned}
& (I_{nd}- \bW^\infty) \bH^{k + 1} \\
= & (I_{nd}- \bW^\infty)\bH^k - \gamma (I_{nd} - \bW) \hat{\bH}^{k} \\
&+ (I_{nd}- \bW^\infty)( \nabla^2 f(\bx^{k + 1}) - \nabla^2 f(\bx^k)) \\
= & \!\left(I_{nd}\!-\!\bW^\infty\!\!-\!\!\gamma (I_{nd}\!-\!\bW)\right) \bH^k\!-\!\gamma (I_{nd}\!-\!\bW) (\bE^k\!-\!\bE^{k + 1})\\
& + (I_{nd}- \bW^\infty)(\nabla^2 f(\bx^{k + 1}) - \nabla^2 f(\bx^k)),
\end{aligned}		
\end{align}
where the first equality holds because $(I_{nd}- \bW^\infty) (I_{nd} - \bW) = I_{nd} - \bW - \bW^\infty + \bW^\infty = I_{nd} - \bW$ and the second equality holds because $\hat{\bH}^{k} = \bH^k + \bE^k - \bE^{k + 1}$. For the first term on the right-hand side of \eqref{eq-34}, we have
\begin{align}\label{eq-35}
&\|\left(I_{nd}- \bW^\infty - \gamma (I_{nd} - \bW)\right) \bH^k\|\nonumber\\
 = & \|(1 - \gamma) (I_{nd}- \bW^\infty) \bH^k + \gamma (\bW - \bW^\infty) \bH^k\| \\
= & \|(1 - \gamma) (I_{nd}- \bW^\infty) \bH^k + \gamma (\bW - \bW^\infty) (I_{nd}- \bW^\infty) \bH^k\|\nonumber\\
\le& (1 - \gamma + \gamma \sigma) \|(I_{nd}- \bW^\infty) \bH^k\|_F.\nonumber
\end{align}
By taking the Frobenius norm $\|\cdot\|_F$  on both sides of \eqref{eq-34}, we have
\begin{align*}
&\|(I_{nd}-\bW^\infty) \bH^k\|_F \\
\leq &(1 - \gamma(1 - \sigma)) \|\bH^k - \bW^\infty \bH^k\|_F + 2 \gamma \|\bE^k\|_F \\
&+ 2 \gamma \|\bE^{k +1}\|_F + \|\nabla^2 f(\bx^{k + 1}) - \nabla^2 f(\bx^k)\|_F\\
\leq &(1 - \gamma(1 - \sigma)) \|\bH^k - \bW^\infty \bH^k\|_F + 4 \gamma \|\bE^k\|_F \\
&+ 2 \gamma (1 - \delta)\|\bH^k -\tilde{\bH}^k\|_F + L_2 \|\bx^{k + 1} - \bx^k\|,
\end{align*}
where we use \eqref{eq-35} in the first inequality and \eqref{eq-expectation-E} and Assumption 2 in the second inequality. This gives \eqref{eq-lem8} and completes the proof.
\end{proof}

Combining Lemmas \ref{lem:recur-ek}--\ref{lem-H-cons} directly gives \eqref{eq-compress-linear}.

Next, we prove \eqref{eq-linear-v} from \eqref{eq-compress-linear}.	By choosing $\gamma \leq \frac{ \delta^2 (1 - \sigma)}{50}$, it is easy to show that
\begin{align*}
\hspace{-2mm} \left(\frac{ \delta (1 \!-\! \sigma)}{8(1 \!-\! \delta)}, \frac{1 \!-\! \sigma}{4}, 1 \right) \bJ^{[2]} \!
 \leq\! \left(1 \!-\! \frac{\gamma}{2} (1 \!-\! \sigma) \right) \! \left(\frac{ \delta (1 \!-\! \sigma)}{8(1 \!- \!\delta)}, \frac{1 \!-\! \sigma}{4}, 1 \right).
\end{align*}
Thus, by multiplying $\left(\frac{ \delta (1 - \sigma)}{8(1 - \delta)}, \frac{1 - \sigma}{4}, 1 \right)$ on both sides of \eqref{eq-compress-linear}, we get
\begin{align}\label{ineq-2}
u_2^{k + 1} \leq \left(1 - \frac{\gamma}{2} (1 - \sigma) \right) u_2^k + \frac{5L_2}{4}  \|\bx^{k + 1} - \bx^k\|
\end{align}
for all $k\ge 0$.
Further, according to \eqref{eq-23}, we have
\begin{align}\label{ineq-1}
& \|\bx^{{k_0} + 1} - \bx^{k_0} \|^2\nonumber \\ \leq & 2 \left( 2 \!+\! \frac{2\alpha L_1}{M_1} \right)^2 \|\bx^{k_0} \!-\! \bW^\infty \bx^{k_0}\|^2 + \frac{16\alpha^2}{M_1^2} \|\bg^{{k_0}} \!-\! \bW^\infty \bg^{{k_0}}\|^2 \nonumber\\
&+ \frac{32 \alpha^2 L_1}{M_1^2} n \left(F(\ox^{{k_0}}) - F(x^*)\right)\\
\le& 9 \Big(\|\bx^{{k_0}} - \bW^\infty \bx^{{k_0}} \|^2 + \frac{(1 - \sigma^2)^2}{50 L_1^2} \|\bg^{{k_0}} - \bW^\infty \bg^{{k_0}} \|^2\nonumber\\
& + \frac{n}{L_1} \left(F(\ox^{{k_0}}) - F(x^*)\right)\Big)=\frac{9u_1^{k_0}}{\sigma^{m-1}},\nonumber
\end{align}
where we substitute $\alpha$ satisfying \eqref{eq-a-c} in the second inequality. By substituting \eqref{ineq-1} into \eqref{ineq-2}, we get \eqref{eq-linear-v} and complete the proof.
\end{proof}
\subsection{Proof of Proposition \ref{prop-3}}\label{sec-prof-assump-M}
\begin{proof}
We use mathematical induction to prove this proposition. First, it is easy to see that \eqref{eq-M1M2} holds for $k=0$. Second, assume that \eqref{eq-M1M2} holds for all $0, 1, \ldots, k-1$. Then, Proposition \ref{prop-1} implies that
\begin{align}\label{ineq-3}
	u_1^{k} \leq \left(1 - \frac{\mu \alpha}{2 M_2} \right) u_1^{k-1} \leq \cdots \leq \left(1 - \frac{\mu \alpha}{2 M_2} \right)^k u_1^0.
\end{align}
By substituting \eqref{ineq-3} into \eqref{eq-linear-v}, we get
\begin{align}\label{ineq-4}
	\begin{split}
	u_2^{k } \leq& \left(1 - \frac{\gamma}{2} (1 - \sigma) \right) u_2^{k-1} \\
	&+ \frac{15L_2}{4}  \sqrt{\sigma^{-(m-1)}u_1^0} \cdot \left(1 - \frac{\mu \alpha}{2 M_2} \right)^{\frac{k-1}{2}}.
	\end{split}
\end{align}
By unrolling \eqref{ineq-4}, we have
\begin{align}\label{eq-a36}
	u_2^k \leq (u_2^0 - C) \left(1 - \frac{\gamma}{2} (1 - \sigma) \right)^k + C  \left(1 - \frac{\mu \alpha}{4 M_2} \right)^k,
\end{align}
where $C \triangleq \frac{3.75 L_2 \sqrt{\sigma^{-(m-1)}u_1^0}}{\sqrt{1-\frac{\mu\alpha}{2M_2}}-\left(1-\frac{\gamma(1-\sigma)}{2}\right)}.$
Let us define
$$\phi = \max\left\{1 - \frac{\gamma}{2} (1 - \sigma), 1 - \frac{\mu \alpha}{4 M_2}\right\}. $$
Then, \eqref{eq-a36} implies  that
\begin{align}\label{ineq-5}
	u_2^k \leq \phi^k \tilde{u}_2^0
\end{align}		
with
\begin{align}\label{eq-tildeu20}
	\tilde{u}_2^0\triangleq\max\left\{u_2^0-C,C \right\}.
\end{align}	
To complete the proof, it remains to show that  \eqref{eq-M1M2} also holds at time step $k$. To do this, with \eqref{eq-alg-H}, we have
\begin{align}
\oH^k=\frac{1}{n}\sum_{i=1}^{n} \nabla^2 f_i(x_i^k).
\end{align}
A simple computation shows that
\begin{align*}
	\|\oH^k\! -\! \nabla^2 F(\ox^k)\|_F \!\le\! & \frac{1}{n} \sum_{i=1}^{n} \|\nabla^2 f_i(x_i^k) \!-\! \nabla^2 f_i(\ox^k)\|_F \\
	\le & \frac{L_2}{n} \sum_{i=1}^{n} \|x_i^k\! -\! \ox^k\|_F \!\le\! \frac{L_2}{\sqrt{n}} \|\bx^k \!-\! \bW^\infty \bx^k\|_F.
\end{align*}
With the definition of $u_2^k$ and  \eqref{ineq-5}, we have
\begin{align}\label{eq-exp-H-Havr}
&\|H_i^k-\nabla^2 F(\ox^k) \|_F \nonumber \\
\le &\|H_i^k-\oH^k \|_F + \|\oH^k - \nabla^2 F(\ox^k)\|_F \nonumber\\
\le & \|\bH^k - \bW^\infty \bH^k \|_F + \frac{L_2}{\sqrt{n}} \|\bx^k - \bW^\infty \bx^k\|_F.
\end{align}
Based on the definitions of $u_1^k$ and $u_2^k$, \eqref{eq-exp-H-Havr} implies that
\begin{align}
 \|H_i^k-\nabla^2 F(\ox^k)\| \preceq u_2^k  + L_2 \sqrt{\frac{u_1^k}{n}},
\end{align}
where we use the fact that $\|\cdot\|\le \|\cdot\|_F$. Since $ \mu I_d \preceq \nabla^2 F(\ox^k)\preceq L_1 I_d $, we have
\begin{align*}
\left(\mu-L_2 \sqrt{\frac{u_1^k}{n}}- u_2^k\right) I_d \preceq  H_i^k \preceq \left(L_1+L_2 \sqrt{\frac{u_1^k}{n}}+ u_2^k\right) I_d.
\end{align*}
Since $u_1^k \leq u_1^0$, $u_2^k \leq \phi^k \tilde{u}_2^0 \leq \tilde{u}_2^0$, and  $M\geq L_2 \sqrt{\frac{u_1^0}{n}} + \tilde{u}_2^0$, based on the definitions of $M_1$ and $M_2$ given in \eqref{def:M1M2}, we have
\begin{align}
M_1 I_d \preceq  H_i^k+MI_d \preceq  M_2 I_d.
\end{align}
Thus, we prove that \eqref{eq-M1M2} also holds at time step  $k$ and complete the proof.
\end{proof}
\subsection{Proof of Proposition \ref{prop-4}}\label{prf-theom-3}
\begin{proof}
First, we  prove \eqref{eq-a38} in three steps. We are going to  bound the consensus error $\|\bx^{{\tilde{k}_0}} - \bW^\infty \bx^{{\tilde{k}_0}} \|$, the gradient tracking error $
\|\bg^{{\tilde{k}_0}} - \bW^\infty \bg^{{\tilde{k}_0}} \|$, and the network optimality gap $ \|\ox^{{\tilde{k}_0}} - x^* \|$ in Step I, II, and III, respectively. Note that the first two terms have already been bounded in Theorem \ref{theo-1}. {The difference between Theorem \ref{theo-1} and Proposition \ref{prop-4} is that in Proposition \ref{prop-4} we dig deeper into the curvature information contained in the Hessian approximation $\bH^k$ to bound the distance between $\bd^k$ and the true Newton's direction, which gives tighter bounds for $\|\bx^{{\tilde{k}_0}} - \bW^\infty \bx^{{\tilde{k}_0}} \|$ and $
\|\bg^{{\tilde{k}_0}} - \bW^\infty \bg^{{\tilde{k}_0}} \|$ than those given in Lemma \ref{lem-preliminary-x-g}.}

{\bf Step I:} To establish a tighter bound on the consensus error $\|\bx^{{\tilde{k}_0}} - \bW^\infty \bx^{{\tilde{k}_0}} \|$, we need to bound  $\| \bd^{{\tilde{k}_0}} - \bW^\infty \bd^{{\tilde{k}_0}}\|$ on the right-hand side of  \eqref{ineq:xkplus1}.

\begin{lem}\label{lem-d-newton}
Under Assumptions \ref{asm-W} and  \ref{asp:lipschitz-continuous}, if  condition  \eqref{ineq-M1M2} holds for a certain $\tilde{k}_0$ with $\tilde{k}_0\ge K$, then we have
\begin{align}\label{eq-Newtondir}
\begin{split}
& \|\od^{{\tilde{k}_0}} - (\nabla^2 F(\ox^{{\tilde{k}_0}} ))^{-1} \nabla F(\ox^{{\tilde{k}_0}}) \| \\
\leq & \frac{L_1}{\mu \sqrt{n}} \left(1 + \epsilon^{{\tilde{k}_0}} \right) \|\bx^{{\tilde{k}_0}} - \bW^\infty \bx^{{\tilde{k}_0}} \|\\
& + \frac{\epsilon^{{\tilde{k}_0}}}{\mu \sqrt{n}} \left( \|\bg^{{\tilde{k}_0}} - \bW^\infty \bg^{{\tilde{k}_0}}\| + \sqrt{n} L_1 \| \ox^{{\tilde{k}_0}} - x^*\| \right)
\end{split}
\end{align}
and
\begin{align}\label{ineq:dk}
\hspace{-2mm}\|\bd^{{\tilde{k}_0}} \!-\! \bW^\infty \bd^{{\tilde{k}_0}}\|
\!\leq&  \frac{1 \!+\! \epsilon^{{\tilde{k}_0}}}{\mu} \|\bg^{{\tilde{k}_0}} - \bW^\infty \bg^{{\tilde{k}_0}}\|\\
+& \frac{ L_1 \epsilon^{{\tilde{k}_0}}}{\mu} \left(\! \|\bx^{{\tilde{k}_0}} \!-\! \bW^\infty \bx^{{\tilde{k}_0}} \| \!+\! \sqrt{n} \| \ox^{{\tilde{k}_0}} \!-\! x^*\| \!\right).\nonumber
\end{align}
\end{lem}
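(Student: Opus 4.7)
The plan is to start from the explicit form of the local direction. Since $M=0$ in stage II, Fact \ref{fact-CG} gives $d_i^{\tilde{k}_0} = (H_i^{\tilde{k}_0})^{-1}(g_i^{\tilde{k}_0}+r_i^{\tilde{k}_0})$. Denote $A_i \triangleq (H_i^{\tilde{k}_0})^{-1}$, $\bar A \triangleq \frac{1}{n}\sum_i A_i$, and $A \triangleq (\nabla^2 F(\ox^{\tilde{k}_0}))^{-1}$; condition \eqref{ineq-M1M2} and $\mu$-strong convexity give $\|A_i\|\le 1/M_1$ and $\|A\|\le 1/\mu$. The central estimate is the resolvent identity
\[
A_i - A = A_i\bigl(\nabla^2 F(\ox^{\tilde{k}_0})-H_i^{\tilde{k}_0}\bigr)A,
\]
which, combined with inequality \eqref{eq-exp-H-Havr}, bounds $\|A_i - A\|$ and its average $\|\bar A - A\|$ in terms of $\|\bH^{\tilde{k}_0}-\bW^\infty \bH^{\tilde{k}_0}\|_F$ and $\|\bx^{\tilde{k}_0}-\bW^\infty\bx^{\tilde{k}_0}\|$; read against the definition of $\epsilon^{\tilde{k}_0}$, this produces $\|\bar A - A\| \le \epsilon^{\tilde{k}_0}/\mu$ up to the CG residual piece that is controlled separately.

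For inequality \eqref{eq-Newtondir}, average the formula for $d_i^{\tilde{k}_0}$ over $i$. Using $\sum_i (g_i^{\tilde{k}_0}-\og^{\tilde{k}_0})=0$ and $\sum_i(A_i-\bar A)=0$ to cancel cross terms, I obtain the decomposition
\[
\od^{\tilde{k}_0} - A\nabla F(\ox^{\tilde{k}_0}) = A(\og^{\tilde{k}_0}-\nabla F(\ox^{\tilde{k}_0})) + (\bar A - A)\og^{\tilde{k}_0} + \tfrac{1}{n}\sum_i (A_i-A)(g_i^{\tilde{k}_0}-\og^{\tilde{k}_0}) + \tfrac{1}{n}\sum_i A_i r_i^{\tilde{k}_0}.
\]
The first term is at most $\frac{L_1}{\mu\sqrt n}\|\bx^{\tilde{k}_0}-\bW^\infty\bx^{\tilde{k}_0}\|$ by \eqref{eq-barg-grad}. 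The second is at most $\frac{\epsilon^{\tilde{k}_0}}{\mu}\|\og^{\tilde{k}_0}\|$, and the elementary bound $\|\og^{\tilde{k}_0}\| \le \frac{L_1}{\sqrt n}\|\bx^{\tilde{k}_0}-\bW^\infty\bx^{\tilde{k}_0}\| + L_1\|\ox^{\tilde{k}_0}-x^*\|$ splits this contribution cleanly between the $\|\bx\|$-column and the $\|\ox-x^*\|$-column of \eqref{eq-Newtondir}. The cross term is handled by Cauchy--Schwarz in $i$, which converts the per-node sum into $\frac{1}{\sqrt n}\|\bg^{\tilde{k}_0}-\bW^\infty\bg^{\tilde{k}_0}\|$ and supplies the $\epsilon^{\tilde{k}_0}$-weighted gradient-consensus term. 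Finally, Fact \ref{fact-CG} gives $\|r_i^{\tilde{k}_0}\|\le c_{\tilde{k}_0-1}\|\bg^{\tilde{k}_0}\|$, and the built-in inequality $c_k\mu/M_1\le \epsilon^{\tilde{k}_0}$ lets the residual term inherit the prefactor $\epsilon^{\tilde{k}_0}/\mu$; one more application of \eqref{ineq:gknorm} decomposes $\|\bg^{\tilde{k}_0}\|$ into the three target quantities.

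Inequality \eqref{ineq:dk} follows the same decomposition but per-node rather than on average. Since $1_n\otimes A\nabla F(\ox^{\tilde{k}_0})$ is a consensual vector and $\bW^\infty$ is the orthogonal projection onto the consensus subspace,
\[
\|\bd^{\tilde{k}_0}-\bW^\infty \bd^{\tilde{k}_0}\|^2 \le \sum_i\|d_i^{\tilde{k}_0}-A\nabla F(\ox^{\tilde{k}_0})\|^2,
\]
so it suffices to bound the right-hand side. Writing $d_i^{\tilde{k}_0}-A\nabla F(\ox^{\tilde{k}_0})=A(g_i^{\tilde{k}_0}-\nabla F(\ox^{\tilde{k}_0}))+(A_i-A)g_i^{\tilde{k}_0}+A_ir_i^{\tilde{k}_0}$, the first piece sums via $\|A\|\le 1/\mu$, after inserting $\pm\og^{\tilde{k}_0}$ and using \eqref{eq-barg-grad}, to the leading $\frac{1}{\mu}\|\bg^{\tilde{k}_0}-\bW^\infty\bg^{\tilde{k}_0}\|$ term plus an $\frac{L_1}{\mu\sqrt n}\|\bx^{\tilde{k}_0}-\bW^\infty\bx^{\tilde{k}_0}\|$ correction that combines with the analogous $\epsilon^{\tilde{k}_0}$-terms to yield $\frac{L_1\epsilon^{\tilde{k}_0}}{\mu}\|\bx^{\tilde{k}_0}-\bW^\infty\bx^{\tilde{k}_0}\|$; the Hessian-error piece $(A_i-A)g_i^{\tilde{k}_0}$ uses the bound on $\|A_i-A\|$ together with \eqref{ineq:gknorm} to supply the $\sqrt n \|\ox^{\tilde{k}_0}-x^*\|$ column, and the CG residual contributes the $c_k$-piece of $\epsilon^{\tilde{k}_0}$. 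The hard part is the bookkeeping: the constants defining $\epsilon^{\tilde{k}_0}$ are tuned so that the three heterogeneous error sources (Hessian consensus, point consensus, and CG inexactness) collapse into a single prefactor, which forces the resolvent identity rather than a crude $\|A_i-A\|\le 2/M_1$, Cauchy--Schwarz when passing from sums over $i$ to aggregated Euclidean norms, and careful tracking of the $1/\sqrt n$ normalizations in \eqref{eq-barg-grad}, \eqref{eq-exp-H-Havr}, and $\epsilon^{\tilde{k}_0}$ so that the coefficients on the right-hand sides match.
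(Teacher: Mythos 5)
Your argument for \eqref{eq-Newtondir} is sound and lands on exactly the claimed coefficients. The paper does not separate $g_i^{\tilde{k}_0}$ into $\og^{\tilde{k}_0}$ plus fluctuation before applying the operator-difference bound --- it bounds $\frac{1}{n}\sum_i\|B_i^{\tilde{k}_0}-(\nabla^2F(\ox^{\tilde{k}_0}))^{-1}\|\,\|g_i^{\tilde{k}_0}\|$ wholesale and only afterwards expands $\|\bg^{\tilde{k}_0}\|$ via \eqref{ineq:gknorm} --- but your finer decomposition distributes the two pieces of $\epsilon^{\tilde{k}_0}$ over disjoint columns and the totals agree; your resolvent identity is exactly the paper's $\|A^{-1}-B^{-1}\|\le\|A^{-1}\|\|B^{-1}\|\|A-B\|$ step in \eqref{eq-44}.

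The proof of \eqref{ineq:dk}, however, has a genuine gap. Bounding $\|\bd^{\tilde{k}_0}-\bW^\infty\bd^{\tilde{k}_0}\|$ by $\bigl(\sum_i\|d_i^{\tilde{k}_0}-A\nabla F(\ox^{\tilde{k}_0})\|^2\bigr)^{1/2}$ with $A=(\nabla^2F(\ox^{\tilde{k}_0}))^{-1}$ is legitimate (the projection property of $\bW^\infty$), but the leading piece it produces is $\bigl(\sum_i\|A(g_i^{\tilde{k}_0}-\nabla F(\ox^{\tilde{k}_0}))\|^2\bigr)^{1/2}$. Inserting $\pm\og^{\tilde{k}_0}$, the fluctuation part gives $\frac{1}{\mu}\|\bg^{\tilde{k}_0}-\bW^\infty\bg^{\tilde{k}_0}\|$ as you say, but the mean part is $\frac{\sqrt{n}}{\mu}\|\og^{\tilde{k}_0}-\nabla F(\ox^{\tilde{k}_0})\|\le\frac{L_1}{\mu}\|\bx^{\tilde{k}_0}-\bW^\infty\bx^{\tilde{k}_0}\|$: the $\sqrt{n}$ from summing $n$ identical blocks cancels the $1/\sqrt{n}$ in \eqref{eq-barg-grad}, so your stated correction $\frac{L_1}{\mu\sqrt{n}}\|\cdot\|$ is off by $\sqrt{n}$, and more importantly its coefficient is $\frac{L_1}{\mu}=\kappa_F$, an $O(1)$ quantity that cannot be ``combined with the analogous $\epsilon^{\tilde{k}_0}$-terms'' to yield $\frac{L_1\epsilon^{\tilde{k}_0}}{\mu}$, since $\epsilon^{\tilde{k}_0}\ll 1$ in stage II. The damage is not cosmetic: Corollary \ref{coro-2} feeds the $\|\bx^{\tilde{k}_0}-\bW^\infty\bx^{\tilde{k}_0}\|$-coefficient of \eqref{ineq:dk} into $\bJ^{[3]}_{11}=\sigma^m(1+\alpha\kappa_F\epsilon^{\tilde{k}_0})$, and replacing $\kappa_F\epsilon^{\tilde{k}_0}$ by $\kappa_F$ there breaks the estimate $\mathcal{E}_1 z\le\frac{3}{4}\sigma^{m/2}z$, because $m$ is only large enough that $\kappa_F\le\frac{1}{4}\sigma^{-m/4}$. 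The fix is the paper's choice of comparison point in \eqref{eq-47}: subtract $(\diag\{\nabla^2F(\ox^{\tilde{k}_0})\})^{-1}\bg^{\tilde{k}_0}$ rather than the consensual vector built from $A\nabla F(\ox^{\tilde{k}_0})$. Since $I_n\otimes A$ commutes with $\bW^\infty$, the leading term becomes $\|(I_{nd}-\bW^\infty)(I_n\otimes A)\bg^{\tilde{k}_0}\|=\|(I_n\otimes A)(\bg^{\tilde{k}_0}-\bW^\infty\bg^{\tilde{k}_0})\|\le\frac{1}{\mu}\|\bg^{\tilde{k}_0}-\bW^\infty\bg^{\tilde{k}_0}\|$ with no consensus-error leakage, and the residual per-node pieces $(A_i-A)g_i^{\tilde{k}_0}+A_ir_i^{\tilde{k}_0}$ then carry the $\epsilon^{\tilde{k}_0}$-prefactor exactly as in your treatment of \eqref{eq-Newtondir}.
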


\begin{proof}	
With $\od^{k}=\frac{1}{n} \sum_{i=1}^{n} B_{i}^{k}\left(g_{i}^{k}+r_{i}^{k}\right),$ we have
\begin{align}\label{eq-45}
\od^{k} = &\frac{1}{n} \sum_{i = 1}^{n} B_i^k  g_i^{k}+ \frac{1}{n} \sum_{i = 1}^{n} B_i^k  r_i^{k}- \frac{1}{n} \sum_{i = 1}^{n} ( \nabla^2 F(\ox^{k}) )^{-1} g_i^{k}\nonumber\\
&+( \nabla^2 F(\ox^{k}))^{-1} \og^{k}
\end{align}
for all $k\ge 0$.
Then, we compute
\begin{align}\label{eq-46}
& \big\| \od^{k} - ( \nabla^2 F(\ox^{k}))^{-1} \nabla F(\ox^{k}) \big\| \nonumber\\
\leq &\big \| ( \nabla^2 F(\ox^{k}) )^{-1} (\og^k \!-\! \nabla F(\ox^{k})) \big\| \!+\!\big \| \od^{k} \!-\! ( \nabla^2 F(\ox^{k}))^{-1} \og^k \big\|\nonumber \\
=&\big \| ( \nabla^2 F(\ox^{k}))^{-1} (\og^k - \nabla F(\ox^{k})) \big\| \nonumber\\
&+\big\|\frac{1}{n} \sum_{i = 1}^{n} B_i^k  g_i^{k}+ \frac{1}{n} \sum_{i = 1}^{n} B_i^k  r_i^{k}- \frac{1}{n} \sum_{i = 1}^{n} ( \nabla^2 F(\ox^{k}))^{-1} g_i^{k}\big\| \nonumber\\
\leq & \frac{L_1}{\mu \sqrt{n}} \|\bx^{k} - \bW^\infty \bx^{k} \| + \frac{1}{n} \sum_{i = 1}^{n} \big\| B_i^k - ( \nabla^2 F(\ox^{k}))^{-1} \big\| \|g_i^{k}\| \nonumber\\
& +{\frac{1}{n} \sum_{i=1}^{n} \|B_i^k \|c_k\|g_i^k\|}
\end{align}
for all $k\ge 0$,
where we use \eqref{eq-45} in the equality and $\og^k=\frac{1}{n}\sum_{i=1}^n f_i(x_i)$ in the last inequality. To bound the second term on the right-hand side of \eqref{eq-46}, with the fact that $\|A^{-1} - B^{-1}\| \leq \|A^{-1}\| \|B^{-1}\| \|A - B\|$, we have
\begin{align}\label{eq-44}
\begin{split}
&\big\|( H_i^{\tilde{k}_0} )^{-1} - ( \nabla^2 F(\ox^{{\tilde{k}_0}}))^{-1} \big\|\\
 \leq& \frac{1}{\mu M_1} \left\| H^{{\tilde{k}_0}}_i -  \nabla^2 F(\ox^{{\tilde{k}_0}}) \right\|\\
\leq& \frac{1}{\mu M_1} \Big( \big\| H^{{\tilde{k}_0}}_i  - \oH^{\tilde{k}_0}\big \|+\big\| \oH^{\tilde{k}_0} - \nabla^2 F(\ox^{{\tilde{k}_0}}) \big\|\Big),\\
\end{split}
\end{align}	
where we use $B_i^{\tilde{k}_0}=( H_i^{\tilde{k}_0})^{-1}$ and $M_1 I_d \preceq H_i^{\tilde{k}_0}\preceq M_2 I_d$.

Next, we bound $\left\| \oH^k- \nabla^2 F(\ox^{k}) \right\|$ on the right-hand side of \eqref{eq-44}. According to  $\bH^{k + 1} = \bH^k - \gamma (I_{nd} - \bW) \hat{\bH}^{k} + \nabla^2 f(\bx^{k + 1}) - \nabla^2 f(\bx^k)$ and the initialization $H_i^0=\nabla^2 f_i(\bx^0)$, we know that
\begin{align}\label{eq-H-DAC}
\oH^{k} = \overline{ \nabla^2 f} (\bx^{k})
\end{align}
for all $k\ge 0$.
Then, according to Assumption \ref{asp:lipschitz-continuous}, \eqref{eq-H-DAC} implies that
\begin{align}\label{eq-H-DAC-1}
\|\oH^{k}- \nabla^2 F(\ox^{k})\|_F \leq \frac{L_2}{\sqrt{n}} \|\bx^{k} - \bW^\infty \bx^{k} \|
\end{align}
for all $k\ge 0$.
Substituting \eqref{eq-H-DAC-1} into \eqref{eq-44}, we have
\begin{align}\label{eq-H-nabla2}
\begin{split}
&\big\|( H_i^{\tilde{k}_0} )^{-1} - ( \nabla^2 F(\ox^{{\tilde{k}_0}}))^{-1} \big\|\\
\leq&\frac{1}{\mu M_1}\Big( \big\| H^{{\tilde{k}_0}}_i  -\oH^{\tilde{k}_0}\big \|+  \frac{L_2}{\sqrt{n}} \|\bx^{{\tilde{k}_0}} - \bW^\infty \bx^{{\tilde{k}_0}} \|\Big).
\end{split}
\end{align}	
Substituting \eqref{eq-H-nabla2} into \eqref{eq-46}, we have
\begin{align}\label{eq-H-nabla2-1}
& \big\| \od^{{\tilde{k}_0}} - ( \nabla^2 F(\ox^{{\tilde{k}_0}}))^{-1} \nabla F(\ox^{{\tilde{k}_0}}) \big\| \nonumber\\
\leq & \frac{L_1}{\mu \sqrt{n}} \|\bx^{{\tilde{k}_0}} - \bW^\infty \bx^{{\tilde{k}_0}} \| +\frac{1}{n} \sum_{i=1}^{n} \|B_i^{\tilde{k}_0} \|c_{\tilde{k}_0}\|g_i^{\tilde{k}_0}\|\nonumber\\
&+ \frac{1}{\mu M_1 n} \sum_{i = 1}^{n} \| H_i^{\tilde{k}_0} -  \nabla^2 F(\ox^{{\tilde{k}_0}}) \| \|g_i^{{\tilde{k}_0}}\|\\
\leq & \frac{L_1}{\mu \sqrt{n}} \|\bx^{{\tilde{k}_0}} - \bW^\infty \bx^{{\tilde{k}_0}} \| + \frac{1}{\mu M_1 n} \Big( L_2 \|\bx^{{\tilde{k}_0}} - \bW^\infty \bx^{{\tilde{k}_0}} \| \nonumber\\
&+ \|\bH^{{\tilde{k}_0}} - \bW^\infty \bH^{{\tilde{k}_0}}\|_F+{\mu c_{\tilde{k}_0}}\sqrt{n} \Big) \|\bg^{{\tilde{k}_0}}\|.\nonumber
\end{align}
In addition, following similar steps as in the derivation of \eqref{eq-H-nabla2-1}, we have
\begin{align}\label{eq-47}
&\| \bd^{{\tilde{k}_0}} - \bW^\infty \bd^{{\tilde{k}_0}}\|\nonumber\\
\leq & \left\| (I_{nd} - \bW^\infty) \left(\bd^{{\tilde{k}_0}} - (\diag\{\nabla^2 F(\ox^{{\tilde{k}_0}})\} )^{-1} \bg^{{\tilde{k}_0}} \right) \right\| \nonumber\\
&+ \left\| (I_{nd} - \bW^\infty) ( \diag\{\nabla^2 F(\ox^{{\tilde{k}_0}})\})^{-1} \bg^{{\tilde{k}_0}}\right\| \\
\leq & \frac{\|\bg^{{\tilde{k}_0}}\|}{\mu M_1 \sqrt{n}} \Big( L_2 \|\bx^{{\tilde{k}_0}} - \bW^\infty \bx^{{\tilde{k}_0}} \| + \|\bH^{{\tilde{k}_0}} - \bW^\infty \bH^{{\tilde{k}_0}}\|_F \nonumber\\
& +{\mu c_{\tilde{k}_0}}  \sqrt{n} \Big)
+ \frac{1}{\mu} \|\bg^{{\tilde{k}_0}} - \bW^\infty \bg^{{\tilde{k}_0}}\|\nonumber \\
= & \frac{\epsilon^{\tilde{k}_0}}{\mu} \|\bg^{{\tilde{k}_0}}\| + \frac{1}{\mu} \|\bg^{{\tilde{k}_0}} - \bW^\infty \bg^{{\tilde{k}_0}}\|.\nonumber
\end{align}
By substituting \eqref{ineq:gknorm} into \eqref{eq-H-nabla2-1} and \eqref{eq-47}, we get \eqref{eq-Newtondir} and \eqref{ineq:dk}. This completes the proof.
\end{proof}

With Lemma \ref{lem-d-newton}, the following corollary gives a tighter bound on $ \|\bx^{k + 1} - \bW^\infty \bx^{k + 1} \|$.

\begin{coro}\label{coro-2}
Under the setting of Lemma \ref{lem-d-newton},  we have
\begin{equation*}
\begin{split}
& \|\bx^{{\tilde{k}_0} + 1} \!-\! \bW^\infty \bx^{{\tilde{k}_0} \!+\! 1} \| \\
\!\leq\! & \bJ^{[3]}_{11}\! \|\bx^{\tilde{k}_0} \!-\! \bW^\infty \bx^{\tilde{k}_0}\| \!+\! \frac{\bJ^{[3]}_{12}}{L_1}\|\bg^{\tilde{k}_0} \!-\! \bW^\infty \bg^{\tilde{k}_0}\|
 \!+\! \bJ^{[3]}_{13} \sqrt{n} \|\ox^{\tilde{k}_0} \!-\! x^*\|.
\end{split}
\end{equation*}
\end{coro}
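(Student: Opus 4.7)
The plan is to combine the basic consensus recursion from Lemma~\ref{lem-preliminary-x-g} with the refined bound on the direction discrepancy established in Lemma~\ref{lem-d-newton}. The key insight powering stage~II, as opposed to stage~I where we only used a crude uniform bound $\|\bd^k\|\le 2\|\bg^k\|/M_1$, is that inequality \eqref{ineq:dk} exploits the fact that $H_i^k$ now approximates $\nabla^2 F(\ox^k)$ well, yielding a multiplier $\epsilon^{\tilde{k}_0}$ that shrinks as consensus and Hessian tracking improve. This is precisely the mechanism that turns the $\sigma^m$ contraction of multi-step consensus into a faster-than-linear local rate.

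First, I would invoke \eqref{ineq:xkplus1} at $k=\tilde{k}_0$ to write
$$\|\bx^{\tilde{k}_0+1} - \bW^\infty \bx^{\tilde{k}_0+1}\| \le \sigma^m \|\bx^{\tilde{k}_0} - \bW^\infty \bx^{\tilde{k}_0}\| + \sigma^m \alpha \|\bd^{\tilde{k}_0} - \bW^\infty \bd^{\tilde{k}_0}\|.$$
Then I would substitute \eqref{ineq:dk} into the second term on the right-hand side and use $L_1/\mu = \kappa_F$, obtaining the bound
\begin{align*}
&\sigma^m\!\left(1 + \alpha \kappa_F \epsilon^{\tilde{k}_0}\right)\|\bx^{\tilde{k}_0} - \bW^\infty \bx^{\tilde{k}_0}\| \\
&\qquad + \frac{\sigma^m \alpha (1 + \epsilon^{\tilde{k}_0})}{\mu}\,\|\bg^{\tilde{k}_0} - \bW^\infty \bg^{\tilde{k}_0}\| + \sigma^m \alpha \kappa_F \epsilon^{\tilde{k}_0}\,\sqrt{n}\,\|\ox^{\tilde{k}_0} - x^*\|.
\end{align*}

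Finally, I would match this expression against the target coefficients in \eqref{eq-J3}. The $\bx$-coefficient agrees exactly with $\bJ^{[3]}_{11}$. Rewriting the middle term as $(\bJ^{[3]}_{12}/L_1)\,\|\bg^{\tilde{k}_0} - \bW^\infty \bg^{\tilde{k}_0}\|$ with $\bJ^{[3]}_{12} = \sigma^m \alpha(1+\epsilon^{\tilde{k}_0})\kappa_F$ matches as well. For the last term, note that $\sigma^m \alpha \kappa_F \epsilon^{\tilde{k}_0} \le 2\sigma^m \alpha \kappa_F \epsilon^{\tilde{k}_0} = \bJ^{[3]}_{13}$, so this slack-by-a-factor-of-two upper bound is absorbed to keep the matrix $\bJ^{[3]}$ uniform. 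There is no real obstacle: once Lemmas~\ref{lem-preliminary-x-g} and~\ref{lem-d-newton} are in hand, the corollary reduces to a one-line substitution followed by coefficient bookkeeping. The only hypothesis that must be tracked is the Hessian bound \eqref{ineq-M1M2} at step $\tilde{k}_0$, which is inherited directly from the assumption of Lemma~\ref{lem-d-newton} and so holds by the hypothesis of the corollary.
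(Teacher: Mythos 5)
Your proposal is correct and matches the paper's own proof, which likewise substitutes the refined direction bound \eqref{ineq:dk} into the consensus recursion \eqref{ineq:xkplus1} and reads off the coefficients of $\bJ^{[3]}$. Your coefficient bookkeeping is accurate, including the observation that the third coefficient only needs $\sigma^m\alpha\kappa_F\epsilon^{\tilde{k}_0}\le 2\sigma^m\alpha\kappa_F\epsilon^{\tilde{k}_0}=\bJ^{[3]}_{13}$.
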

\begin{proof}
We substitute the tighter bound on $\| \bd^{{\tilde{k}_0}} - \bW^\infty \bd^{{\tilde{k}_0}}\|$ given in \eqref{ineq:dk} into \eqref{ineq:xkplus1} and complete the proof.
\end{proof}

{\bf Step II:} To get a tighter bound on the gradient tracking error $\|\bg^{k} - \bW^\infty \bg^{k} \|$, we need to bound  $\| \bx^{k+1} - \bx^k\|$ on the right hand of \eqref{ineq:gkplus1} by taking advantage of the curvature information.

\begin{lem}\label{lem:xkplus1-xk}
Under Assumptions \ref{asm-W}--\ref{ass-strongly-cvx}, if condition \eqref{eq-M1M2} holds for a certain ${\tilde{k}_0}$, then we have
\begin{align}\label{eq-a49}
&\|\bx^{{\tilde{k}_0} + 1} - \bx^{\tilde{k}_0} \|\nonumber\\
 \leq & \left( 2 + \alpha \kappa_F + 2\alpha \kappa_F \epsilon^{\tilde{k}_0} \right) \|\bx^{\tilde{k}_0} - \bW^\infty \bx^{\tilde{k}_0}\| \nonumber\\
 &+ \alpha \kappa_F(\sigma^m + 2\epsilon^{\tilde{k}_0}) \cdot \frac{1}{L_1} \|\bg^{{\tilde{k}_0}} - \bW^\infty \bg^{{\tilde{k}_0}}\| \\
& + \alpha \left(1 + 2\kappa_F \epsilon^{\tilde{k}_0} + \frac{L_2}{2\mu} \|\ox^{\tilde{k}_0} - x^*\| \right) \sqrt{n} \|\ox^{\tilde{k}_0} - x^*\|.\nonumber
\end{align}
\end{lem}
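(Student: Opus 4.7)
The plan is to begin from the update $\bx^{\tilde{k}_0+1} = \bW^m(\bx^{\tilde{k}_0} - \alpha \bd^{\tilde{k}_0})$ and write
\[
\bx^{\tilde{k}_0+1} - \bx^{\tilde{k}_0} = (\bW^m - I_{nd})(\bx^{\tilde{k}_0} - \bW^\infty \bx^{\tilde{k}_0}) - \alpha \bW^m \bd^{\tilde{k}_0},
\]
where I have inserted $\bW^\infty \bx^{\tilde{k}_0}$ for free because $(\bW^m - I_{nd})\bW^\infty = 0$. Since $\bW^m$ is doubly stochastic, $\|\bW^m - I_{nd}\| \le 2$, yielding the ``$2\|\bx^{\tilde{k}_0} - \bW^\infty \bx^{\tilde{k}_0}\|$'' contribution. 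For the second summand I use the complementary decomposition $\bW^m \bd^{\tilde{k}_0} = \bW^\infty \bd^{\tilde{k}_0} + (\bW^m - \bW^\infty)(\bd^{\tilde{k}_0} - \bW^\infty \bd^{\tilde{k}_0})$ together with $\|\bW^m - \bW^\infty\| \le \sigma^m$, so that
\[
\alpha \|\bW^m \bd^{\tilde{k}_0}\| \le \alpha \sqrt{n}\|\od^{\tilde{k}_0}\| + \alpha \sigma^m \|\bd^{\tilde{k}_0} - \bW^\infty \bd^{\tilde{k}_0}\|.
\]

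Next, I would plug the bound \eqref{ineq:dk} from Lemma \ref{lem-d-newton} into $\|\bd^{\tilde{k}_0} - \bW^\infty \bd^{\tilde{k}_0}\|$; after multiplying by $\alpha\sigma^m$ and using $\tfrac{1}{\mu} = \tfrac{\kappa_F}{L_1}$, this contributes the $\sigma^m$-piece in the coefficient of $\tfrac{1}{L_1}\|\bg^{\tilde{k}_0} - \bW^\infty \bg^{\tilde{k}_0}\|$ and the $\sigma^m\epsilon^{\tilde{k}_0}$ contributions to the other two terms. For the averaged direction I apply the triangle inequality against the centralized Newton step at $\ox^{\tilde{k}_0}$,
\[
\|\od^{\tilde{k}_0}\| \le \|\od^{\tilde{k}_0} - (\nabla^2 F(\ox^{\tilde{k}_0}))^{-1}\nabla F(\ox^{\tilde{k}_0})\| + \|(\nabla^2 F(\ox^{\tilde{k}_0}))^{-1}\nabla F(\ox^{\tilde{k}_0})\|,
\]
and control the first summand by \eqref{eq-Newtondir}.

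The main obstacle, and where curvature information enters decisively, is the second summand. Using only strong convexity and $\|\nabla F(\ox^{\tilde{k}_0})\| \le L_1\|\ox^{\tilde{k}_0} - x^*\|$ would cost an unwanted $\kappa_F$ factor on the $\sqrt{n}\|\ox^{\tilde{k}_0} - x^*\|$ term. Instead I would use the integral representation $\nabla F(\ox^{\tilde{k}_0}) = \int_0^1 \nabla^2 F(x^* + t(\ox^{\tilde{k}_0} - x^*))(\ox^{\tilde{k}_0} - x^*)\,dt$, subtract $\nabla^2 F(\ox^{\tilde{k}_0})(\ox^{\tilde{k}_0} - x^*)$, and invoke the Hessian Lipschitzness of $F$ (inherited from Assumption \ref{asp:lipschitz-continuous}) together with $\lambda_{\min}(\nabla^2 F(\ox^{\tilde{k}_0})) \ge \mu$. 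This yields
\[
\|(\nabla^2 F(\ox^{\tilde{k}_0}))^{-1}\nabla F(\ox^{\tilde{k}_0})\| \le \left(1 + \tfrac{L_2}{2\mu}\|\ox^{\tilde{k}_0} - x^*\|\right)\|\ox^{\tilde{k}_0} - x^*\|,
\]
which is precisely the quadratic-remainder estimate that drives the local quadratic rate of centralized Newton's method. It produces the leading coefficient ``$1$'' and the $\tfrac{L_2}{2\mu}\|\ox^{\tilde{k}_0} - x^*\|$ correction in the last bracket.

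Collecting all three contributions (the $2$, the $\sigma^m$-scaled consensus term, and the averaged Newton piece) and using $\sigma^m \le 1$ to absorb $\sigma^m\epsilon^{\tilde{k}_0}$ into $\epsilon^{\tilde{k}_0}$ where necessary, the three coefficients on $\|\bx^{\tilde{k}_0} - \bW^\infty \bx^{\tilde{k}_0}\|$, $\tfrac{1}{L_1}\|\bg^{\tilde{k}_0} - \bW^\infty \bg^{\tilde{k}_0}\|$, and $\sqrt{n}\|\ox^{\tilde{k}_0} - x^*\|$ match the claimed $2+\alpha\kappa_F+2\alpha\kappa_F\epsilon^{\tilde{k}_0}$, $\alpha\kappa_F(\sigma^m+2\epsilon^{\tilde{k}_0})$, and $\alpha(1+2\kappa_F\epsilon^{\tilde{k}_0}+\tfrac{L_2}{2\mu}\|\ox^{\tilde{k}_0}-x^*\|)$ respectively, proving \eqref{eq-a49}.
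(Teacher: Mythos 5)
Your proposal is correct and follows essentially the same route as the paper: the same decomposition $\|\bx^{\tilde{k}_0+1}-\bx^{\tilde{k}_0}\| \le 2\|\bx^{\tilde{k}_0}-\bW^\infty\bx^{\tilde{k}_0}\| + \alpha\|\bW^m\bd^{\tilde{k}_0}\|$, the same splitting of $\bW^m\bd^{\tilde{k}_0}$ into a $\sigma^m$-contracted consensus part plus the averaged direction, and the same use of \eqref{ineq:dk} and \eqref{eq-Newtondir}. The only cosmetic difference is that you re-derive the quadratic-remainder bound $\|(\nabla^2 F(\ox^{\tilde{k}_0}))^{-1}\nabla F(\ox^{\tilde{k}_0})\| \le (1+\tfrac{L_2}{2\mu}\|\ox^{\tilde{k}_0}-x^*\|)\|\ox^{\tilde{k}_0}-x^*\|$ from the integral representation, whereas the paper cites it as the standard centralized Newton estimate \eqref{centra-newton}.
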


\begin{proof}
According to $\bx^{k + 1} = \bW^m  (\bx^k - \alpha \bd^k)$, we have
\begin{align}\label{eq-50}
\begin{aligned}
&\|\bx^{k + 1} - \bx^k \|\\
\leq & \|(\bW^m - I_{nd}) (\bx^k - \bW^\infty \bx^k)\| + \alpha \|\bW^m \bd^k\|\\
\leq &2 \|\bx^k - \bW^\infty \bx^k\| + \alpha \|\bW^m \bd^k\|
\end{aligned}
\end{align}
for all $k\ge 0$.
Next, we bound the second term $	\|\bW^m \bd^k\|$ on the right-hand side of \eqref{eq-50}. According to the triangle inequality, we have
\begin{align}\label{eq-51}
&\|\bW^m \bd^k\| \nonumber\\
\leq& \|\bW^m \bd^k \!-\! \bW^\infty \bd^k\| + \sqrt{n} \|\od^k \!-\! (\nabla^2 F(\ox^{k} ))^{-1} \nabla F(\ox^{k}) \| \nonumber\\
&+ \sqrt{n} \|(\nabla^2 F(\ox^{k} ))^{-1} \nabla F(\ox^{k}) \|
\end{align}
for all $k\ge 0$.
For the third term on the right-hand side of \eqref{eq-51}, according to the fact  that
\begin{align}\label{centra-newton}
	\|\ox^k - x^* - (\nabla^2 F(\ox^{k}))^{-1} \nabla F(\ox^{k})\| \leq \frac{L_2}{2\mu} \|\ox^k - x^*\|^2
\end{align}
holds for all $k\ge 0$, we have
\begin{align}
\hspace{-4mm}	\|(\nabla^2 F(\ox^{k} ))^{-1} \nabla F(\ox^{k}) \| \leq \|\ox^k - x^*\| +  \frac{L_2}{2\mu} \|\ox^k - x^*\|^2
\end{align}
for all $k\ge 0$.
For the first term on the right-hand side of \eqref{eq-51}, we have $\|\bW^m \bd^k - \bW^\infty \bd^k\| \leq \sigma^m \|\bd^k - \bW^\infty \bd^k\|$ and then use \eqref{ineq:dk} to bound it. For the second term on the right-hand side of \eqref{eq-51}, we  use \eqref{eq-Newtondir} to bound it.
Thus, \eqref{eq-51} implies that
\begin{align}\label{ineq:dknorm}
\|\bW^m \bd^{{\tilde{k}_0}}\|
\leq & \kappa_F (1 + 2\epsilon^{\tilde{k}_0}) \|\bx^{{\tilde{k}_0}} - \bW^\infty \bx^{{\tilde{k}_0}}\| \nonumber\\
&+  \kappa_F(\sigma^m + 2\epsilon^{\tilde{k}_0}) \cdot \frac{1}{L_1} \|\bg^{{\tilde{k}_0}} - \bW^\infty \bg^{{\tilde{k}_0}}\| \\
+&  \left(1 + 2\kappa_F \epsilon^{\tilde{k}_0} + \frac{L_2}{2\mu} \|\ox^{\tilde{k}_0} - x^*\| \right) \sqrt{n} \|\ox^{\tilde{k}_0} - x^*\|.\nonumber
\end{align}
Finally, substituting \eqref{ineq:dknorm} into \eqref{eq-50}, we get \eqref{eq-a49} and
complete the proof.
\end{proof}

With the tighter bound on $	\|\bx^{{\tilde{k}_0} + 1} - \bx^{\tilde{k}_0} \|$ given in Lemma \ref{lem:xkplus1-xk}, we have the following corollary, which gives a tighter bound on the gradient tracking error $\|\bg^{{\tilde{k}_0} + 1} - \bW^\infty \bg^{{\tilde{k}_0} + 1} \|$.

\begin{coro}\label{coro-3}
Under the setting of Lemma \ref{lem:xkplus1-xk}, we have
\begin{equation*}
\begin{split}
&\frac{1}{L_1} \|\bg^{{\tilde{k}_0} + 1} - \bW^\infty \bg^{{\tilde{k}_0} + 1} \| \\
\leq & \bJ^{[3]}_{21} \|\bx^{\tilde{k}_0} - \bW^\infty \bx^{\tilde{k}_0}\| + \bJ^{[3]}_{22}\frac{1}{L_1} \|\bg^{\tilde{k}_0} - \bW^\infty \bg^{\tilde{k}_0}\|
\\
&+ \bJ^{[3]}_{23} \sqrt{n} \|\ox^{\tilde{k}_0} \!-\! x^*\|.
\end{split}
\end{equation*}
\end{coro}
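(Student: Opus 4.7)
The plan is to derive Corollary \ref{coro-3} as an immediate consequence of two estimates already in hand: the ``raw'' gradient-tracking recursion \eqref{ineq:gkplus1} from Lemma \ref{lem-preliminary-x-g}, and the refined bound \eqref{eq-a49} on $\|\bx^{\tilde{k}_0+1}-\bx^{\tilde{k}_0}\|$ just obtained in Lemma \ref{lem:xkplus1-xk}. First I would invoke \eqref{ineq:gkplus1} at $k=\tilde{k}_0$ to write
\begin{align*}
\|\bg^{\tilde{k}_0+1}-\bW^\infty \bg^{\tilde{k}_0+1}\|
\leq \sigma^{m}\|\bg^{\tilde{k}_0}-\bW^\infty \bg^{\tilde{k}_0}\|
+\sigma^{m}L_1\|\bx^{\tilde{k}_0+1}-\bx^{\tilde{k}_0}\|,
\end{align*}
which isolates the dependence on the one quantity we are entitled to sharpen in stage~II.

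Next, I would plug \eqref{eq-a49} into the second term on the right-hand side. After distributing the $\sigma^{m}L_1$ factor and dividing throughout by $L_1$, three groups appear, each multiplying one of the coordinates of $\bq_3^{\tilde{k}_0}$: the coefficient of $\|\bx^{\tilde{k}_0}-\bW^\infty \bx^{\tilde{k}_0}\|$ equals $\sigma^{m}(2+\alpha\kappa_F+2\alpha\kappa_F\epsilon^{\tilde{k}_0})$, the coefficient of $\tfrac{1}{L_1}\|\bg^{\tilde{k}_0}-\bW^\infty \bg^{\tilde{k}_0}\|$ collapses to $\sigma^{m}+\sigma^{m}\alpha\kappa_F(\sigma^{m}+2\epsilon^{\tilde{k}_0}) =\sigma^{m}\bigl(1+\alpha\kappa_F(\sigma^{m}+2\epsilon^{\tilde{k}_0})\bigr)$, and the coefficient of $\sqrt{n}\|\ox^{\tilde{k}_0}-x^*\|$ is $\sigma^{m}\alpha(1+2\kappa_F\epsilon^{\tilde{k}_0}+\delta^{\tilde{k}_0})$, where we have used the definition $\delta^{\tilde{k}_0}=\tfrac{L_2}{2\mu}\|\ox^{\tilde{k}_0}-x^*\|$. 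These three expressions are exactly $\bJ^{[3]}_{21}$, $\bJ^{[3]}_{22}$, and $\bJ^{[3]}_{23}$ as defined in \eqref{eq-J3}, which yields the claimed vector inequality.

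There is no serious obstacle here: both inputs are already available, the assumptions on $\alpha$, $M$, $c_k$, and $\gamma$ needed for condition \eqref{ineq-M1M2} (and hence for Lemma \ref{lem:xkplus1-xk}) are inherited from the hypotheses of the corollary, and the entire argument is an algebraic regrouping. The only bookkeeping to double-check is the factoring that produces $\bJ^{[3]}_{22}$ and the confirmation that the term $\tfrac{L_2}{2\mu}\|\ox^{\tilde{k}_0}-x^*\|$ appearing in \eqref{eq-a49} is precisely $\delta^{\tilde{k}_0}$, so that no residual terms in $\|\ox^{\tilde{k}_0}-x^*\|^2$ are left over.
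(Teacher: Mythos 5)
Your proposal is correct and is exactly the paper's own argument: the paper proves Corollary \ref{coro-3} by substituting \eqref{eq-a49} into \eqref{ineq:gkplus1}, and your coefficient bookkeeping (including the identification of $\tfrac{L_2}{2\mu}\|\ox^{\tilde{k}_0}-x^*\|$ with $\delta^{\tilde{k}_0}$) matches the entries $\bJ^{[3]}_{21}$, $\bJ^{[3]}_{22}$, $\bJ^{[3]}_{23}$ in \eqref{eq-J3}. Nothing further is needed.
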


\begin{proof}
We substitute \eqref{eq-a49}  into \eqref{ineq:gkplus1}   and complete the proof.
\end{proof}

{\bf Step III:} The following corollary bounds $ \|\ox^{k + 1} - x^* \|$ based on the locally quadratic convergence of the centralized Newton's method.

\begin{coro}\label{lem-xopt}
Under the setting of Lemma \ref{lem:xkplus1-xk}, we have
\begin{equation*}
\begin{split}
& \|\ox^{{\tilde{k}_0} + 1} - x^* \| \\
\leq & \bJ^{[3]}_{33}\|\ox^{\tilde{k}_0} - x^*\| + \bJ^{[3]}_{31} \|\bx^{{\tilde{k}_0}} - \bW^\infty \bx^{{\tilde{k}_0}} \|\\
& + \bJ^{[3]}_{32} \frac{1}{L_1}  \|\bg^{{\tilde{k}_0}} - \bW^\infty \bg^{{\tilde{k}_0}}\|.
\end{split}
\end{equation*}
\end{coro}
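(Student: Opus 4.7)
The plan is to view $\ox^{k+1}$ as an inexact centralized Newton step applied at $\ox^{\tilde{k}_0}$, and then decompose the error into three parts: the damping from using a possibly non-unit step size, the centralized-Newton quadratic residual, and the mismatch between the averaged local direction $\od^{\tilde{k}_0}$ and the true Newton direction $(\nabla^2 F(\ox^{\tilde{k}_0}))^{-1}\nabla F(\ox^{\tilde{k}_0})$. The first two terms yield $\bJ^{[3]}_{33}$, while the last term is exactly what Lemma \ref{lem-d-newton} controls and gives $\bJ^{[3]}_{31}$ and $\bJ^{[3]}_{32}$.

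First I would use the doubly-stochastic property of $\bW^m$ to obtain $\ox^{\tilde{k}_0+1} = \ox^{\tilde{k}_0} - \alpha \od^{\tilde{k}_0}$. Then, by adding and subtracting $\alpha (\nabla^2 F(\ox^{\tilde{k}_0}))^{-1}\nabla F(\ox^{\tilde{k}_0})$, I would write
\begin{align*}
\ox^{\tilde{k}_0+1} - x^* =\;& (1-\alpha)(\ox^{\tilde{k}_0} - x^*) \\
& + \alpha\bigl[(\ox^{\tilde{k}_0} - x^*) - (\nabla^2 F(\ox^{\tilde{k}_0}))^{-1}\nabla F(\ox^{\tilde{k}_0})\bigr] \\
& + \alpha\bigl[(\nabla^2 F(\ox^{\tilde{k}_0}))^{-1}\nabla F(\ox^{\tilde{k}_0}) - \od^{\tilde{k}_0}\bigr].
\end{align*}
Since $\alpha \le 1$ in stage II, the triangle inequality then gives $\|\ox^{\tilde{k}_0+1}-x^*\|$ as a sum of three nonnegative contributions.

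Next I would bound the middle bracket by invoking the centralized Newton inequality \eqref{centra-newton}, which yields $\tfrac{L_2}{2\mu}\|\ox^{\tilde{k}_0}-x^*\|^2 = \delta^{\tilde{k}_0}\|\ox^{\tilde{k}_0}-x^*\|$, so that the first two contributions combine into $(1-\alpha+\alpha\delta^{\tilde{k}_0})\|\ox^{\tilde{k}_0}-x^*\|$. To bound the last bracket I would apply \eqref{eq-Newtondir} from Lemma \ref{lem-d-newton} directly; multiplying that estimate through by $\alpha$ produces the coefficients $\alpha\kappa_F(1+\epsilon^{\tilde{k}_0})$ in front of $\|\bx^{\tilde{k}_0}-\bW^\infty\bx^{\tilde{k}_0}\|$, $\alpha\kappa_F\epsilon^{\tilde{k}_0}$ in front of $\tfrac{1}{L_1}\|\bg^{\tilde{k}_0}-\bW^\infty\bg^{\tilde{k}_0}\|$, and an extra $\alpha\kappa_F\epsilon^{\tilde{k}_0}\|\ox^{\tilde{k}_0}-x^*\|$ that merges into $\bJ^{[3]}_{33}$. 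Matching the coefficients to the definition of $\bJ^{[3]}$ in \eqref{eq-J3} completes the identification.

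There is no real obstacle here; the only thing to be careful about is the bookkeeping of the $\sqrt{n}$ factors when translating between $\|\od^{\tilde{k}_0}-(\nabla^2 F(\ox^{\tilde{k}_0}))^{-1}\nabla F(\ox^{\tilde{k}_0})\|$ (which concerns the $d$-dimensional averaged direction) and the aggregated quantities $\|\bx^{\tilde{k}_0}-\bW^\infty\bx^{\tilde{k}_0}\|$, $\|\bg^{\tilde{k}_0}-\bW^\infty\bg^{\tilde{k}_0}\|$ used in $\bq_3^{\tilde{k}_0}$. The factor of $\sqrt{n}$ that appears in the statement (through the normalization $\sqrt{n}\|\ox-x^*\|$ used in $\bq_3$) is absorbed consistently because \eqref{eq-Newtondir} already contains the $1/\sqrt{n}$ factor, so no delicate cancellation is needed.
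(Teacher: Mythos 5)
Your proposal is correct and follows essentially the same route as the paper: write $\ox^{\tilde{k}_0+1}=\ox^{\tilde{k}_0}-\alpha\od^{\tilde{k}_0}$, split off the inexactness $\alpha\|\od^{\tilde{k}_0}-(\nabla^2 F(\ox^{\tilde{k}_0}))^{-1}\nabla F(\ox^{\tilde{k}_0})\|$ and bound it with \eqref{eq-Newtondir}, and bound the remaining damped-Newton term via \eqref{centra-newton} to get $(1-\alpha+\alpha\delta^{\tilde{k}_0})\|\ox^{\tilde{k}_0}-x^*\|$. Your three-way decomposition is just a finer-grained version of the paper's two-term triangle inequality, and your remark about the $\sqrt{n}$ normalization is exactly the bookkeeping needed to match the coefficients of $\bJ^{[3]}$.
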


\begin{proof}
With $\ox^{k + 1} = \ox^k - \alpha \od^k$,  we have
\begin{align}\label{eq-56}
\begin{aligned}
\|\ox^{k + 1} - x^* \|\leq&\|\ox^k - x^* - \alpha (\nabla^2 F(\ox^{k}))^{-1} \nabla F(\ox^{k})\|\\
& + \alpha \|\od^k - (\nabla^2 F(\ox^{k}))^{-1} \nabla F(\ox^{k})\|
\end{aligned}
\end{align}
for all $k\ge 0$.
With \eqref{centra-newton}, which is given by the centralized Newton's method \cite{nocedal2006numerical}, we get
\begin{align}\label{eq-57}
\begin{aligned}
&\|\ox^k - x^* - \alpha (\nabla^2 F(\ox^{k}))^{-1} \nabla F(\ox^{k})\| \\
\leq& (1 - \alpha) \|\ox^k - x^*\| + \frac{\alpha L_2}{2\mu} \|\ox^k - x^*\|^2 \\
=& (1 - \alpha + \alpha \delta^k) \|\ox^k - x^*\|
\end{aligned}
\end{align}
for all $k\ge 0$.
By substituting \eqref{eq-57} and \eqref{eq-Newtondir} into \eqref{eq-56}, we  complete the proof.	
\end{proof}

Combining Corollaries \ref{coro-2}, \ref{coro-3}, and  \ref{lem-xopt}, we get \eqref{eq-a38}.
Next, we prove \eqref{eq-a39} from \eqref{eq-a38}. With $\alpha = 1$, the coefficient matrix $\bJ^{[3]}$ in \eqref{eq-a38} can be simplified as
\begin{equation*}
\bJ^{[3]}=\mathcal{E}_1+ \mathcal{E}_2
\end{equation*}
with
\begin{equation*}
\mathcal{E}_1\triangleq \begin{bmatrix}
\sigma^m  & \sigma^m \kappa_F & 0 \\
\sigma^m \left( 2 + \kappa_F \right) & \sigma^m \left(1 + \kappa_F \sigma^m \right) & \sigma^m \\
\kappa_F & 0 & 0
\end{bmatrix}
\end{equation*}
and
\begin{equation*}
\begin{aligned}
\mathcal{E}_2 &\triangleq \begin{bmatrix}
\sigma^m \kappa_F \epsilon^{\tilde{k}_0} & \sigma^m \epsilon^{\tilde{k}_0} \kappa_F & 2 \sigma^m \epsilon^{\tilde{k}_0} \kappa_F \\
2 \sigma^m \kappa_F \epsilon^{\tilde{k}_0} & 2 \sigma^m \kappa_F \epsilon^{\tilde{k}_0}  & \sigma^m \left( 2\kappa_F \epsilon^{\tilde{k}_0} + \delta^{\tilde{k}_0} \right) \\
\kappa_F\epsilon^{{\tilde{k}_0}} & \kappa_F \epsilon^{{\tilde{k}_0}}& \delta^{\tilde{k}_0} +  \kappa_F\epsilon^{{\tilde{k}_0}}
\end{bmatrix} \\
&\leq (\kappa_F \epsilon^{\tilde{k}_0} + \delta^{\tilde{k}_0}) \begin{bmatrix}
\sigma^m  & \sigma^m  & 2 \sigma^m \\
2\sigma^m & 2\sigma^m  & 2\sigma^m \\
1 & 1 & 1
\end{bmatrix}.
\end{aligned}
\end{equation*}
By defining $z\triangleq [1;\sigma^{-\frac{m}{4}};0.5 \sigma^{-\frac{3m}{4}}]$, we claim that
\begin{align}\label{eq-local-rate-1}
\mathcal{E}_1 z \le \frac{3\sigma^{\frac{m}{2}}}{4}z
\end{align}
and
\begin{align}\label{eq-local-rate-2}
\mathcal{E}_2 z \le \frac{\sigma^{\frac{m}{2}}}{4}z.
\end{align}

Indeed, to prove \eqref{eq-local-rate-1},
with  $m > \frac{ 4 \log (4\kappa_F)}{ -\log \sigma}$, or equivalently, $\kappa_{F}\le \frac{\sigma^{-\frac{m}{4}}}{4}$, it is easy to show that
\begin{equation*}
\mathcal{E}_1\left(1; \sigma^{-\frac{m}{4}} ; 0.5\sigma^{-\frac{3m}{4}}\right)\le \frac{3\sigma^{\frac{m}{2}}}{4}\left(1; \sigma^{-\frac{m}{4}}; 0.5\sigma^{-\frac{3m}{4}}\right).
\end{equation*}
To prove \eqref{eq-local-rate-2},
it is easy to show that
\begin{equation*}
\begin{bmatrix}
\sigma^m  & \sigma^m  & 2 \sigma^m \\
2\sigma^m & 2\sigma^m  & 2\sigma^m \\
1 & 1 & 1
\end{bmatrix}
\begin{bmatrix}
1 \\ \sigma^{-\frac{m}{4}} \\ 0.5\sigma^{-\frac{3m}{4}}
\end{bmatrix} \leq {5} \begin{bmatrix}
1 \\ \sigma^{-\frac{m}{4}} \\ 0.5\sigma^{-\frac{3m}{4}}
\end{bmatrix}.
\end{equation*}
Under the condition
\begin{align}\label{ineq:changecondition}
	\kappa_F \epsilon^{\tilde{k}_0} + \delta^{\tilde{k}_0} \leq \frac{1}{{20}}\sigma^{\frac{m}{2}},
\end{align}
we have
$$\mathcal{E}_2 z \leq \frac{1}{4}\sigma^{\frac{m}{2}} z.$$
Therefore, by summing up \eqref{eq-local-rate-1} and \eqref{eq-local-rate-2}, we get \eqref{eq-a39} and complete the proof.
\end{proof}

\subsection{Proof of Proposition \ref{prop-5}}\label{prf-prop-5}
\begin{proof}
According to the definition of $\epsilon^k$ and $\delta^k$ given in \eqref{eq-J3}, we have
\begin{align}\label{eq-82}
&\kappa_F \epsilon^k + \delta^k \nonumber\\
 =& \frac{\kappa_F}{M_1} \left( \frac{1}{\sqrt{n}}L_2 \|\bx^{k} \!-\! \bW^\infty \bx^{k} \|\! +\! \frac{1}{\sqrt{n}} \|\bH^{k} \!-\! \bW^\infty \bH^{k}\|_F+{\mu c_k}\right)\nonumber \\
 &+ \frac{ L_2}{2\mu} \|\ox^k - x^*\| \nonumber\\
 \leq& \frac{\kappa_F}{M_1\sqrt{n}} u_2^k + \frac{\kappa_F L_2}{M_1\sqrt{n}} u_3^k + \frac{1}{{40}}\sigma^{\frac{m}{2}}
\end{align}
for all $k\ge 0$, where the inequality holds because $c_k\le \frac{M_1\sigma^{m/2}}{40\mu \kappa_F}$, $\|\bH^{k} - \bW^\infty \bH^{k}\|_F \leq u_2^k$, and
\begin{align*}
	\frac{\kappa_F}{M_1} \frac{1}{\sqrt{n}}L_2 \|\bx^{k} - \bW^\infty \bx^{k} \| + \frac{ L_2}{2\mu} \|\ox^k - x^*\| \leq \frac{\kappa_FL_2}{M_1\sqrt{n}} u_3^k.
\end{align*}
On the other hand, it is worth noting that  \eqref{ineq-2} holds for any $k\ge 0$, i.e.,
\begin{align}
	u_2^{k + 1} \leq \left(1 - \frac{\gamma}{2} (1 - \sigma) \right) u_2^k + \frac{5L_2}{4}  \|\bx^{k + 1} - \bx^k\|.
\end{align}
With Theorem \eqref{theo-1}, we know that \eqref{eq-11} holds for any $k\ge 0$. Thus, by substituting $\alpha=1$ into \eqref{eq-11}, we have
\begin{align}
		\|\bx^{k + 1} - \bx^k \|
		&\leq \left( 2 + \frac{2 L_1}{M_1} \right) \|\bx^k - \bW^\infty \bx^k\|\\
		&\hspace{-6mm} + \frac{2}{M_1} \|\bg^{k} - \bW^\infty \bg^{k}\| + \frac{2 \sqrt{n}}{M_1} \| \nabla F(\ox^{k})\| \leq \frac{4L_1}{M_1} u_3^k,\nonumber
\end{align}
where the last inequality holds because $\| \nabla F(\ox^{k})\| \leq L_1\|\ox^{k} - x^*\|$. Define
$$
A_k \triangleq u_2^k + \frac{5L_1 L_2}{M_1(1-\sigma^{m/2})} u_3^k.
$$
Then, \eqref{eq-82} implies that
\begin{align}\label{ineq-6}
	\kappa_F \epsilon^k + \delta^k \leq  \frac{\kappa_F}{M_1\sqrt{n}} A_k + \frac{1}{{40}}\sigma^{\frac{m}{2}}
\end{align}
for all $k\ge 0$.
The motivation behind the definition of the sequence $\{ A_k\}_{k\ge 0}$ is given as follows. If Proposition \ref{prop-4} holds at time step $k$, i.e., $u_3^{k+1} \leq \sigma^{m/2}u_3^k$, then we have
\begin{align}\label{ineq-Ak}
\begin{aligned}
	A_{k+1} = &u_2^{k+1} + \frac{5L_1 L_2}{M_1(1-\sigma^{m/2})} u_3^{k+1}\\
	\leq & u_2^k + \frac{5L_2}{4} \cdot  \frac{4L_1}{M_1} u_3^k + \frac{5L_1 L_2}{M_1(1-\sigma^{m/2})} \sigma^{m/2}  u_3^k\\
	=& u_2^k + \frac{5L_1 L_2}{M_1(1-\sigma^{m/2})}  u_3^k = A_k.
\end{aligned}
\end{align}

Now, we are ready to prove that \eqref{ineq-M1M2} and \eqref{ineq-eps-delta} hold for all $k\ge 0$ by induction. First, we show that \eqref{ineq-M1M2} and \eqref{ineq-eps-delta} hold at time step $K$.
Since $F(\ox^{k}) - F(x^*) \geq
\frac{\mu}{2}\|\ox^{k}-\bx^*\|^2$, we know that $\bq_1^K \geq \frac{1}{2\kappa_F} (\bq_3^K)^2$. Thus, we have
\begin{align*}
	(u_3^K)^2 
	\leq & \left(1, \sigma^{-\frac{m}{2}}, 0.25 \sigma^{-\frac{3m}{2}}\right)(\bq_3^K)^2\\
	\leq & 2\kappa_F \Big(1, \sigma^{-\frac{m}{2}}, 0.25 \sigma^{-\frac{3m}{2}}\Big) \bq_1^K \\
	\leq & \frac{100\kappa_F\sigma^{-\frac{5m}{2}}}{(1 - \sigma^2)^2} \left(1, \frac{(1 - \sigma^2)^2}{50}, 2\sigma^{m-1} \right) \bq_1^K\\
	 = & \frac{100\kappa_F\sigma^{-\frac{5m}{2}}}{(1 - \sigma^2)^2} u_1^K,
\end{align*}
which implies that
\begin{align*}
	A_K = & u_2^K + \frac{5L_1 L_2}{M_1(1-\sigma^{m/2})} u_3^K\\
	\leq & u_2^K + \frac{5L_1 L_2}{M_1(1-\sigma^{m/2})} \cdot \frac{10\sqrt{\kappa_F}\sigma^{-\frac{5m}{4}}}{1 - \sigma^2} \sqrt{u_1^K}\\
	\leq & \tilde{u}_2^0 \phi^K + \frac{50L_1 L_2 \sqrt{\kappa_F}\sigma^{-\frac{5m}{4}}}{M_1(1-\sigma^{m/2})(1 - \sigma^2)}  \left(1 - \frac{\mu \alpha}{2 M_2} \right)^{\frac{K}{2}} \sqrt{u_1^0}\\
	\leq & \left(\tilde{u}_2^0  + \frac{50L_1 L_2 \sqrt{\kappa_F}\sigma^{-\frac{5m}{4}}}{M_1(1-\sigma^{m/2})(1 - \sigma^2)} \sqrt{u_1^0}\right) \phi^K. 
\end{align*}
Here, the inequality holds because $\left(1 - \frac{\mu \alpha}{2 M_2} \right)^{\frac{1}{2}} \leq 1 - \frac{\mu \alpha}{4 M_2} \leq \phi$.

Further, with \eqref{ineq-6}, we have
$$\kappa_F \epsilon^K + \delta^K \leq \frac{\kappa_F}{M_1\sqrt{n}} A_K + \frac{1}{{40}}\sigma^{\frac{m}{2}}.
$$
Thus,  condition \eqref{ineq-eps-delta} holds at time step $K$ if
\begin{align}\label{eq-AK}
\frac{\kappa_F}{M_1} A_K &\leq \frac{\kappa_F}{M_1}  \left(\tilde{u}_2^0  + \frac{50L_1 L_2 \sqrt{\kappa_F}\sigma^{-\frac{5m}{4}}}{M_1(1-\sigma^{m/2})(1 - \sigma^2)} \sqrt{u_1^0}\right) \phi^K \nonumber\\
&\leq \frac{1}{{40}}\sigma^{\frac{m}{2}},
\end{align}
which is equivalent to
\begin{align}\label{ineq:conditon_on_K}
K \geq& \frac{\log \frac{\sigma^{m/2}}{\frac{40\kappa_F}{M_1}  \left(\tilde{u}_2^0  + \frac{50L_1 L_2 \sqrt{\kappa_F}\sigma^{-\frac{5m}{4}}}{M_1(1-\sigma^{m/2})(1 - \sigma^2)} \sqrt{u_1^0}\right)}}{\log \phi} \\
=& \frac{\frac{m}{2}\log \sigma - \log \frac{40\kappa_F}{M_1\sqrt{n}}  \left(\tilde{u}_2^0  + \frac{50L_1 L_2 \sqrt{\kappa_F}\sigma^{-\frac{5m}{4}}}{M_1(1-\sigma^{m/2})(1 - \sigma^2)} \sqrt{u_1^0}\right)}{\log \phi}.\nonumber
\end{align}
Besides, based on \eqref{eq-exp-H-Havr} and the definition of $A_k$, we have
\begin{align}\label{eq-HAK}
		&\|H_i^K-\nabla^2 F(\ox^K) \|_F \nonumber \\
		\leq & \|\bH^K - \bW^\infty \bH^K \|_F + \frac{L_2}{\sqrt{n}} \|\bx^K - \bW^\infty \bx^K\|_F \nonumber\\
\leq & u_2^K \leq A_K \leq \frac{M_1}{40} = \frac{\mu}{41},
\end{align}
where we use \eqref{eq-AK} in the last inequality. Then, \eqref{eq-HAK} implies that
$$
M_1I_d =(u - \frac{\mu}{41})I_d \leq H_i^K \leq (L_1 + \frac{\mu}{41})I_d = M_2I_d.
$$
Thus, both \eqref{ineq-M1M2} and \eqref{ineq-eps-delta} hold at time step $K$.

Second, assume that \eqref{ineq-M1M2} and \eqref{ineq-eps-delta} hold for $K, \ldots, k-1$. Then, according to Proposition \ref{prop-4}, we have the inequality $u_3^{k+1} \leq \sigma^{m/2}u_3^k$ for all $K, \ldots, k-1$. Thus, \eqref{ineq-Ak} shows that
\begin{align}\label{eq-A-descent}
	A_k \leq A_{k-1} \leq \ldots \leq A_K,
\end{align}
which,  together with \eqref{ineq-6}, implies that
$$\kappa_F \epsilon^k + \delta^k \!\leq\! \frac{\kappa_F}{M_1\sqrt{n}} A_k + \frac{1}{{40}}\sigma^{\frac{m}{2}} \!\leq \!\frac{\kappa_F}{M_1\sqrt{n}} A_K + \frac{1}{{40}}\sigma^{\frac{m}{2}}\! \leq\! \frac{1}{{20}}\sigma^{\frac{m}{2}}.
$$
Therefore, we know that $\eqref{ineq-eps-delta}$ also holds at time step $k$, Besides, by the definition of $A_k$, we have
\begin{align}
	\|H_i^k-\nabla^2 F(\ox^k) \|_F \leq u_2^k \leq A_k \leq A_K \leq \frac{M_1}{40} = \frac{\mu}{41},
\end{align}
which implies that
$$
 M_1I_d =(u - \frac{\mu}{41})I_d \leq H_i^k \leq (L_1 + \frac{\mu}{41})I_d = M_2I_d.
$$
Thus, both \eqref{ineq-M1M2} and \eqref{ineq-eps-delta} hold at time step $k$. This completes the mathematical induction.

Finally,
by substituting $M_1= \frac{40\mu}{41}$ into \eqref{ineq:conditon_on_K}, we get
$$
	K \geq \frac{\frac{m}{2}\log \sigma - \log \frac{41\kappa_F}{\mu\sqrt{n}}  \left(\tilde{u}_2^0  + \frac{52L_2\kappa_F  \sqrt{\kappa_F}\sigma^{-\frac{5m}{4}}}{(1-\sigma^{m/2})(1 - \sigma^2)} \sqrt{u_1^0}\right)}{\log \phi}
$$
and complete the proof.
\end{proof}

\bibliographystyle{IEEEtran}
\bibliography{IEEEabrvPnADMM}
\end{document}